\numberwithin{equation}{section}
\newtheorem{theorem}{Theorem}[section]
\newtheorem{prop}[theorem]{Proposition} 
\newtheorem{lem}[theorem]{Lemma} 
\newtheorem{cor}[theorem]{Corollary}
\theoremstyle{definition}
\newtheorem{remark}[theorem]{Remark}
\newtheorem{defn}[theorem]{Definition}
\theoremstyle{remark}\newtheorem{notation}[theorem]{Notation}
\def\@rst #1 #2other{#1}
\newcommand\MR[1]{\relax\ifhmode\unskip\spacefactor3000 \space\fi
  \MRhref{\expandafter\@rst #1 other}{#1}}
\newcommand{\MRhref}[2]{\href{http://www.ams.org/mathscinet-getitem?mr=#1}{MR#2}}
\def\MR#1{\href{http://www.ams.org/mathscinet-getitem?mr=#1}{MR#1}}
\newcommand{\C}{\mathbbm{C}}
\newcommand{\E}{\mathbbm{E}}
\newcommand{\N}{\mathbbm{N}}
\newcommand{\Nz}{\mathbbm{N}_0}
\newcommand{\Q}{\mathbbm{Q}}
\newcommand{\Z}{\mathbbm{Z}}
\newcommand{\R}{\mathbbm{R}}
\newcommand{\eps}{\varepsilon}
\newcommand{\1}{\mathbf{1}}
\DeclareMathOperator{\Var}{Var}
\DeclareMathOperator{\SLE}{SLE}
\def\cZ{\mathcal{Z}}
\def\cX{\mathcal{X}}
\def\cT{\mathcal{T}}
\def\cR{\mathcal{R}}
\def\cO{\mathcal{O}}
\def\cL{\mathcal{L}}
\def\cl{\mathfrak{l}}
\newcommand{\aryb}{\begin{eqnarray*}}
\newcommand{\arye}{\end{eqnarray*}}
\def\alb#1\ale{\begin{align*}#1\end{align*}}
\newcommand{\eqb}{\begin{equation}}
\newcommand{\eqe}{\end{equation}}
\newcommand{\eqbn}{\begin{equation*}}
\newcommand{\eqen}{\end{equation*}}
\newcommand{\BB}{\mathbbm}
\newcommand{\ol}{\overline}
\newcommand{\op}{\operatorname}
\newcommand{\frk}{\mathfrak}
\newcommand{\eqD}{\overset{d}{=}}
\newcommand{\rtaD}{\overset{d}{\rightarrow}}
\newcommand{\ep}{\epsilon}
\newcommand{\rta}{\rightarrow}
\newcommand{\wt}{\widetilde}
\newcommand{\wh}{\widehat} 
\newcommand{\mcl}{\mathcal}
\newcommand{\bdy}{\partial}
\DeclareMathAlphabet{\mathpzc}{OT1}{pzc}{m}{it}
\newcommand{\old}[1]{{}} 
\begin{document}

\author{
\begin{tabular}{c}Ewain Gwynne\\[-5pt]\small  University of Chicago\end{tabular}\;
\begin{tabular}{c}Nina Holden\\[-5pt]\small New York Univeristy\end{tabular}\;
\begin{tabular}{c}Xin Sun\\[-5pt]\small Peking University\end{tabular}}

\title{Joint scaling limit of a bipolar-oriented triangulation and its dual in the peanosphere sense}
\date{}

\maketitle

\begin{abstract} 
Kenyon, Miller, Sheffield, and Wilson (2015) showed how to encode a random bipolar-oriented planar map by means of a random walk with a certain step size distribution. Using this encoding together with the mating-of-trees construction of Liouville quantum gravity (LQG) due to Duplantier, Miller, and Sheffield (2014), they proved that random bipolar-oriented planar maps converge in the scaling limit to a $\sqrt{4/3}$-LQG surface decorated by an independent SLE$_{12}$ in the peanosphere sense, meaning that the height functions of a particular pair of trees on the maps converge in the scaling limit to the correlated planar Brownian motion which encodes the SLE-decorated LQG surface. We improve this convergence result by proving that the pair of height functions for an infinite-volume random bipolar-oriented triangulation and the pair of height functions for its dual map converge jointly in law in the scaling limit to the two planar Brownian motions which encode the same $\sqrt{4/3}$-LQG surface decorated by both an SLE$_{12}$ curve and the ``dual'' SLE$_{12}$ curve which travels in a direction perpendicular (in the sense of imaginary geometry) to the original curve. This confirms a conjecture of Kenyon, Miller, Sheffield, and Wilson (2015). {Our paper is the starting point of recent  works connecting LQG and random permutons such as the Baxter permuton.}
\end{abstract}

\tableofcontents

\parindent 0 pt
\setlength{\parskip}{0.25cm plus1mm minus1mm}

\section{Introduction}
\label{sec:intro}

\subsection{Overview}
\label{sec:overview}

A planar map is a planar graph together with an embedding into $\R^2$, where the embedding is specified up to orientation preserving homeomorphisms of $\BB R^2$. In recent years there has been an interest in understanding the geometry of randomly chosen planar maps, partly motivated by applications in quantum gravity, conformal field theory, and string theory. In this context, a random planar map is a natural model of a discrete random surface.

In this paper, we will consider random \emph{bipolar-oriented planar maps}. A bipolar-oriented map is a pair $(G,\mcl O)$ where $G$ is a graph and $\mcl O$ is an orientation on the edges of $G$ with a unique source and sink. 
We note that a bipolar-oriented planar map cannot have self-loops, but can have multiple edges.

Bipolar orientations, especially on planar maps, have a rich combinatorial structure and numerous applications in algorithms. For an overview of the graph theoretic perspective on bipolar orientations, we refer to~\cite{graph} and references therein. See also~\cite{counting,bijection} and the references therein for enumerative and bijective results for bipolar-orientated planar maps. 
A random bipolar orientation on (a subgraph of) the square lattice $\BB Z^2$ is equivalent to the six-vertex model via a bijection described in~\cite{kmsw-6vertex}.

If $(G,\mcl O)$ is a bipolar-oriented planar map, then there exists an embedding of $G$ into $\BB R^2$ such that each edge points in the northwest direction. 
For such an embedding, the source is the same as the southeast pole (i.e., the southeastern-most vertex); and the sink is the same as the northwest pole (i.e., the northwestern-most vertex). 
It is explained in~\cite{kmsw-bipolar} that a bipolar-oriented planar map is decorated with a canonical \emph{east-going tree} rooted at the southeast pole and a \emph{west-going tree} rooted at the northwest pole. These trees give rise to a natural Peano curve  on $G$ which snakes between the two trees and traverses each edge of $G$ and each face of $G$ exactly once; see Figure~\ref{fig:intro-bipolarmap} for an illustration. 

Let $\mcl Z = (\mcl L , \mcl R) $ be the two-dimensional walk whose coordinates are the height functions of the east-going and west-going trees. Then $\mcl Z$ uniquely determines $G$ via a bijection which is defined in~\cite{kmsw-bipolar} and reviewed in Section~\ref{sec:bipolar-background} below. 
It is shown in~\cite{kmsw-bipolar} that for a large class of probability measures on bipolar-oriented planar maps, including uniformly random bipolar-oriented $k$-angulations for $k\geq 3$ with the source and sink adjacent, the law of the corresponding walk $\mcl Z$ is that of a random walk with iid increments subject to certain conditioning. 
Moreover, the correlation of the two coordinates of the step size distribution is always $-1/2$. Hence in this case $\mcl Z$ converges in the scaling limit to a conditioned correlated two-dimensional Brownian motion with correlation $-1/2$. 

We can remove the conditioning by considering a local limit of bipolar-oriented planar maps in the Benjamini-Schramm topology~\cite{benjamini-schramm-topology} based at a uniformly random edge, so that the source and sink both lie at $\infty$. {This local limit was shown to exist in~\cite[Section 3.3]{ghs-map-dist}.} In this case the corresponding walk $\mcl Z$ is an unconditioned two-sided simple random walk with iid increments and its scaling limit is a two-sided correlated two-dimensional Brownian motion with correlation $-1/2$. 

If $(G,\mcl O)$ is a bipolar-oriented planar map and $\wt G$ is the dual map of $G$, then $\mcl O$ induces a \emph{dual bipolar orientation} $\wt{\mcl O}$ on $\wt G$. This orientation gives rise to \emph{north-going} and \emph{south-going} trees on $\wt G$ and thereby a pair of dual height functions $\wt{\mcl Z} = (\wt{\mcl L} , \wt{\mcl R})$ (see Figure~\ref{fig:intro-dual}). 
The main theorem of this paper is a scaling limit result for the joint law of the pair of random walks $(\mcl Z , \wt{\mcl Z})$ in the case of a uniform-infinite bipolar-oriented triangulation (which is the local limit of uniform bipolar-oriented triangulations). 

The description of the limiting law will be given in terms of the left/right boundary length processers of a pair of whole-plane SLE$_{12}$ curves from $\infty$ to $\infty$, coupled together in the sense of imaginary geometry~\cite{ig1,ig2,ig3,ig4}, on an independent \emph{$\sqrt{4/3}$-Liouville quantum gravity cone}. We will now discuss these objects and their relationship to bipolar-oriented planar maps. 
 
A \emph{Liouville quantum gravity} (LQG) surface is a natural model of a continuum random surface, which is defined as follows. Let $\gamma\in(0,2)$ and let $h$ be an instance of the Gaussian free field (GFF) or a related distribution on a domain $D\subset \BB C$. Formally the $\gamma$-LQG surface associated with $h$ is the Riemannian manifold with metric tensor given by
\begin{equation}
\label{eqn:lqg_metric}
e^{\gamma h(z)} (dx^2 + dy^2) ,
\end{equation}
where $dx^2 + dy^2$ denotes the Euclidean metric tensor on $D$. This expression does not make literal sense since $h$ is a distribution and does not take values at points, but one can make sense of the volume form associated with~\eqref{eqn:lqg_metric} as a random measure~\cite{shef-kpz}. See also~\cite{rhodes-vargas-review} and the references therein for an equivalent formulation. {See~\cite{dddf-lfpp,gm-uniqueness} for a construction of the random distance function associated with~\eqref{eqn:lqg_metric}.}

It is expected that a wide class of random planar map models converge under an appropriate scaling limit to LQG surfaces (with parameter $\gamma$ depending on the model) as the size of the map tends to $\infty$. In many of these models, the random planar map is naturally decorated by a statistical physics model which can be represented by a curve or a collection of loops. Often such curves or collections of loops are expected to converge in the scaling limit to a \emph{Schramm-Loewner evolution (SLE) curve}~\cite{schramm0} or a \emph{conformal loop ensemble (CLE)~\cite{shef-cle}}, independent from the quantum surface, with parameter\footnote{Here and throughout the rest of the paper we use the convention of~\cite{ig1,ig2,ig3,ig4} of writing $\kappa' > 4$ for the $\SLE$ parameter and $\kappa = 16/\kappa'  \in(0,4) $ for the dual parameter.} given by either $\kappa = \gamma^2 \in (0,4)$ or $\kappa '  = 16/\gamma^2 > 4$.
See~\cite{shef-kpz,shef-cle,shef-zipper,shef-burger,wedges,kmsw-bipolar} for rigorous mathematical formulations of conjectures of this form and the references therein for related predictions in the physics literature. 
In particular, a random bipolar-oriented planar map decorated by the Peano curve which snakes between the pair of the east- and west-going trees discussed above corresponds to SLE$_{12}$-decorated $\sqrt{4/3}$-LQG~\cite{kmsw-bipolar}. 

There are various ways to formulate the above scaling limit conjectures. One way is to conformally embed a random planar map into $\BB C$ (e.g.\ via circle packing or Riemann uniformization) and try to prove that the random area measure which assigns a point mass to each vertex converges under an appropriate scaling limit to the LQG measure~\cite{shef-kpz,shef-zipper,wedges} and that the statistical mechanics model, thus embedded, converges in an appropriate sense to SLE or CLE. 
{Such scaling limit results have been established  for uniform triangulations under the Cardy-Smirnov embedding\footnote{It was formerly referred as Cardy embedding.}~\cite{hs-cardy-embedding} and mated-CRT maps under the Tutte embedding~\cite{gms-tutte}.} 
{There are also convergence results for random planar maps as metric spaces.} LeGall and Miermont \cite{legall-uniqueness,miermont-brownian-map} proved that uniform quadrangulations (equipped with a re-scaling of the graph distance) converge in law with respect to the Gromov-Hausdorff topology to a continuum metric space called the \emph{Brownian map}. In the recent  works \cite{sphere-constructions,tbm-characterization,lqg-tbm1,lqg-tbm2,lqg-tbm3} the authors show that there is a metric on a $\gamma$-LQG sphere for $\gamma=\sqrt{8/3}$~\cite{wedges,sphere-constructions} under which it is isometric to the Brownian map. However, metric scaling limit results have not been rigorously established for $\gamma \not=\sqrt{8/3}$.

An alternative notion of convergence for {a broader class of} random planar map models has been established based on the so-called \emph{peanosphere construction} of~\cite{wedges}. This construction encodes a $\gamma$-LQG cone, a particular type of $\gamma$-LQG surface parametrized by $\BB C$, decorated by an independent whole-plane space-filling SLE$_{\kappa'}$ curve from $\infty$ to $\infty$~\cite{ig4} for $\kappa' = 16/\gamma^2$ in terms of correlated two-dimensional Brownian motion $Z= (L,R)$ with correlation $-\cos(\pi\gamma^2/4)$ (see \cite{kappa8-cov} for a calculation of this correlation in the case when $\gamma \in (0,\sqrt 2)$). We will review the details of this construction in Section~\ref{sec:peanosphere-background} below. 

Certain random planar maps, possibly decorated with a statistical physics model, can be encoded by means of two-dimensional random walks via discrete analogues of the peanosphere construction. If one can show that the walk encoding the random planar map converges in the scaling limit to a two-dimensional Brownian motion with correlation $-\cos(\pi\gamma^2/4)$, then one obtains convergence of the random planar map model toward SLE-decorated $\gamma$-LQG in a certain sense, which we call the \emph{peanosphere sense}. 

In particular, the above-mentioned scaling limit result of~\cite{kmsw-bipolar} for the random walk $\mcl Z$ associated with an infinite-volume random bipolar-oriented planar map can be re-phrased as the statement that such a planar map converges in the scaling limit to whole-plane SLE$_{12}$ from $\infty$ to $\infty$ on an independent $\sqrt{4/3}$-LQG cone. 
Peanosphere scaling limit results for other random planar map models are proven in~\cite{shef-burger,gms-burger-cone,gms-burger-local,gms-burger-finite,gkmw-burger}. 
 
It is natural to expect that the joint law of an infinite-volume bipolar-oriented map and its dual, each equipped with the Peano curve which snakes between the associated trees, converges in the scaling limit to a single $\sqrt{4/3}$-LQG cone decorated by an independent \emph{pair} of space-filling SLE$_{12}$ curves $\eta'$ and $\wt{\eta}'$ which (in some sense) travel in an orthogonal direction to each other. In \cite{ig4} space-filling SLE$_{\kappa'}$ is constructed as the Peano curve tracing the interface between the tree of $\theta$-angle imaginary geometry flow lines of a whole-plane Gaussian free field and the tree of $(\pi+\theta)$-angle flow lines of the same field. 
Hence one may expect that the limiting curves $\eta'$ and $\wt{\eta}'$ are constructed from the same Gaussian free field in such a way that the angle between the corresponding trees of flow lines is $\frac{\pi}{2}$. 

In \cite[Conjecture 1]{kmsw-bipolar}, it is conjectured that this is indeed the case, at least in the peanosphere sense. Our main result Theorem~\ref{thm1} below is a conformation of the conjecture in \cite{kmsw-bipolar} in the case of uniform infinite bipolar-oriented triangulations.
In other words, we show that the joint law of the height-function pairs $( \mcl Z, \wt{\mcl Z})$ for a uniform infinite bipolar-oriented triangulation converges in the scaling limit to the two correlated two-dimensional Brownian motions which encode $\eta'$ and $\wt\eta'$ via the peanosphere construction. 
This result constitutes the first non-trivial peanosphere scaling limit result for the \emph{joint} law of two different space-filling curves on (almost) the same planar map. 

In the course of proving our main result we will also obtain a description of the $\theta$-angle flow lines of the Gaussian free field used to construct the space-filling SLE$_{\kappa'}$ in terms of the peanosphere Brownian motion for general $\kappa' > 4$ and $\theta\in (0,\pi)$; see Section~\ref{sec:continuum-decomp}.

\bigskip

\begin{remark}
Since the first version of our paper appeared on  the arXiv, there has been significant further developments concerning LQG as the scaling limit of random combinatorial objects, where the mating-of-trees theory has played a crucial role. See~\cite{ghs-mating-survey} for a survey, and see~\cite{bp-lqg-notes,sheffield-icm} for more updates on the general subject of LQG. In particular, the present paper is the starting point of a substantial body of literature which relates LQG decorated by SLE to the scaling limits of certain pattern-avoiding random permutations. To be more specific, Borga~\cite{borga-skew-permuton}  introduces a class of random permutons called \emph{skew Brownian permutons} and shows, using the results of Section~\ref{sec:continuum-decomp} of the present paper, that they can be described in terms of SLE-decorated LQG. Other works show that the skew Brownian permutons describe the scaling limits of various types of random permutations~\cite{borga-strong-baxter,bm-baxter-permutation}; and use the connection to SLE and LQG to prove properties of random permutations and their scaling limits~\cite{bhsy-baxter-permuton,bgs-meander,bdg-skew-lis}. In addition, analogous results to the ones in the present paper are proven for random planar maps decorated by Schnyder woods in~\cite{lsw-schnyder-wood}. Finally, the recent paper~\cite{asy-p-theta} proves an exact formula for the relationship between the flow line angle $\theta$ and the parameter $p$ in the setting of Proposition~\ref{prop:ppp} below. 
\end{remark}

\noindent \textbf{Acknowledgements}
We thank Scott Sheffield for helpful discussions. E.G.\ was supported by the U.S. Department of Defense via an NDSEG fellowship. N.H.\ was supported by a doctoral research fellowship from the Norwegian Research Council.  {This paper was written when the authors were Ph.D. students at MIT.}

\subsection{Basic notation} 

Here we record some basic notations which we will use throughout this paper.  

\begin{notation} \label{def-discrete-intervals}
	For $a < b \in \BB R$, we define the discrete intervals $[a,b]_{\BB Z} := [a, b]\cap \BB Z$ and $(a,b)_{\BB Z} := (a,b)\cap \BB Z$. 
\end{notation}

\begin{notation}\label{def-asymp}
	If $a$ and $b$ are two quantities, we write $a\preceq b$ (resp.\ $a \succeq b$) if there is a constant $C$ (independent of the parameters of interest) such that $a \leq C b$ (resp.\ $a \geq C b$). We write $a \asymp b$ if $a\preceq b$ and $a \succeq b$. 
\end{notation}

\begin{notation} \label{def-o-notation}
	If $a$ and $b$ are two quantities which depend on a parameter $x$, we write $a = o_x(b)$ (resp.\ $a = O_x(b)$) if $a/b \rta 0$ (resp.\ $a/b$ remains bounded) as $x \rta 0$ (or as $x\rta\infty$, depending on context). 
\end{notation}

Unless otherwise stated, all implicit constants in $\asymp, \preceq$, and $\succeq$ and $O_x(\cdot)$ and $o_x(\cdot)$ errors involved in the proof of a result are required to satisfy the same dependencies as described in the statement of said result.

\subsection{Background on bipolar-oriented maps}
\label{sec:bipolar-background}

In this subsection we review some background on bipolar-oriented planar maps following~\cite{kmsw-bipolar} and give a precise definition of the uniform infinite bipolar-orientated triangulation.

\subsubsection{Bipolar-oriented planar maps}
\label{sec:bipolar-map}

In this paper we work on planar maps with no self-loops, but multiple edges are allowed.  A planar map is called a triangulation if every face is incident to 3 edges. Given a planar map $G$, draw a dual vertex in each face and connect a pair of dual vertices when their corresponding faces are adjacent. This produces a new planar map which is called the \emph{dual map} of $ G$. 

An orientation of a graph is a way of assigning each of its edges a direction. A vertex is called a sink (resp. source) if there are no outgoing
(resp. incoming) edges incident to the vertex.  An orientation $\cO$ on a planar map is \emph{bipolar} if it is acyclic and has a single source and a single sink. An orientation on a planar map is bipolar if and only if the map can be embedded into $\BB C$ in such a way that every edge is oriented upward, which means that the starting point of the edge is lower than the ending point of the edge. See Figure~\ref{fig:intro-bipolarmap} for an illustration. We will always assume that our bipolar-oriented planar maps are embedded in the plane in this manner. 

In order to make the directions of flow lines in the discrete and continuum pictures the same, when we consider an embedding of a bipolar-oriented map we will treat the ``compass'' in $\BB C$ as being rotated clockwise by 45 degrees, so that the upward direction is northwest and the downward direction is southeast (c.f.\ Remark~\ref{rmk:compass}). Hence the edges of an embedded planar map are oriented from southeast to northwest, and it is natural to call the sink the \textit{northwest pole} and the source the \textit{southeast pole}. 

It is also natural to draw an unoriented edge between the two poles that divides the outer face into a \textit{southwest pole face} and a \textit{northeast pole face}.  
The new map including the unoriented edge between the two poles is called the \emph{augmented map} of the original map.

\begin{figure}[ht!]
	\begin{center}
		\includegraphics[scale=0.7]{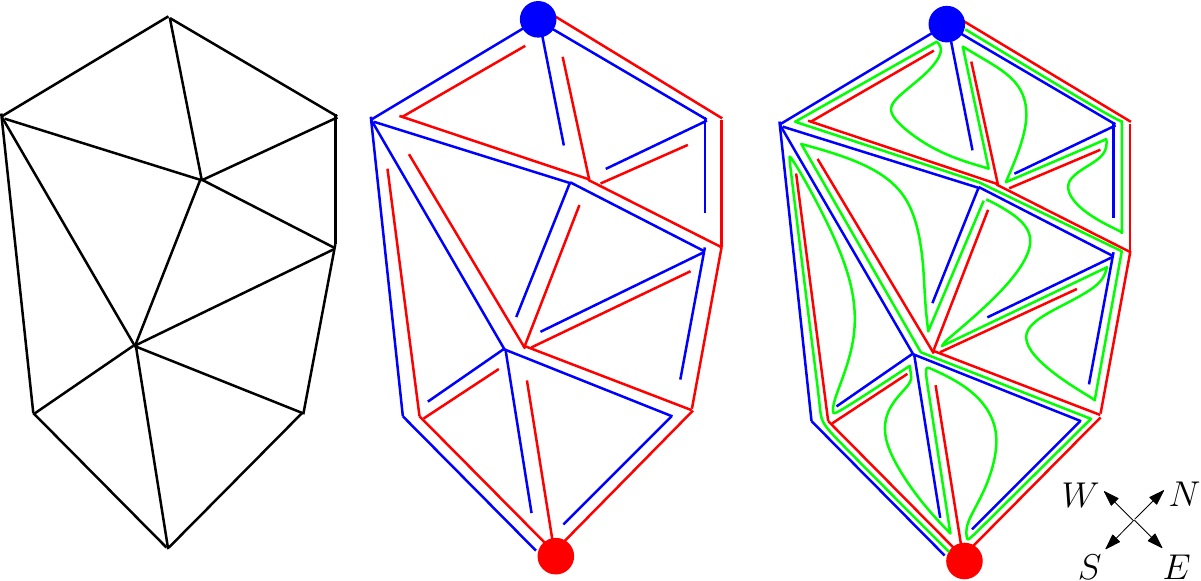}
	\end{center}
	\caption{
		Left: A bipolar-oriented triangulation $(G , \mcl O)$ embedded into the plane in such a way that direction of every edge is upwards. Middle: one can associate an east-going tree $\mcl T^E$ (shown in red) with $(G , \mcl O)$ by cutting each edge other than the leftmost edge leading upward from each vertex at the point where it meets that vertex; and a west-going tree $\mcl T^W$ (shown in blue) by cutting each edge other than the rightmost edge leading downward from each vertex at the point where it meets that vertex. Right: the exploration path $\lambda'$ (shown in green) associated with $(G , \mcl O)$, which traces the interface between the trees, starting at the southeast pole and ending at the northwest pole. 
	} \label{fig:intro-bipolarmap}
\end{figure} 

\begin{figure}[ht!]
	\begin{center}
		\includegraphics[scale=0.7]{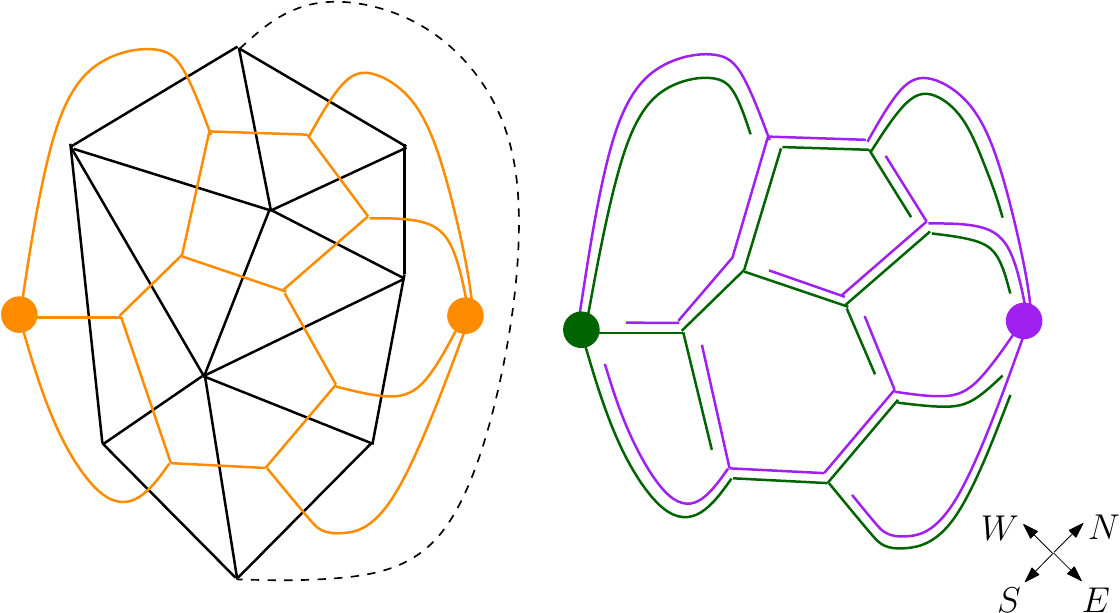}
	\end{center}
	\caption{The dual map $(\wt G  , \wt \cO)$ (orange) associated with a bipolar-oriented triangulation $(G , \cO)$ and its associated trees. Note that the northeast and southwest poles of $(\wt G  ,\wt \cO)$ correspond to the northeast and southwest pole faces in the augmented version of $G$. The extra edge added to $G$ to form the augmented map is shown as a dashed line. The edges of the dual map are oriented leftwards, i.e.\ from northeast to southwest. The trees associated with $(\wt G ,\wt {\cO})$ are the north-going tree (purple) and the south-going tree (green).
	} \label{fig:intro-dual}
\end{figure}

Given a bipolar orientation $\cO$ on a planar map $G$, by reversing the direction of edges, we obtain another bipolar orientation $\ol\cO$ on $G$. Let $\wt G$ be the dual map of the augmented map of $G$. By declaring that each edge of the $\wt G$ travels from northeast to southwest, we obtain a dual orientation $\wt \cO$ on $\wt G$ which is also bipolar. See Figure~\ref{fig:intro-dual}. The northeast (resp. southwest) pole face of $G$ becomes the northeast (resp. southwest) pole vertex while the southeast (resp. northwest) pole of $G$ becomes the southwest (resp. northeast) pole face of $\wt G$. Therefore $\cO$ induces three other bipolar orientation $\ol\cO,\wt\cO,\ol{\wt \cO}$, each of which also determines $\cO$.  

\begin{remark}
	The convention described above of assigning compass directions to the planar map differs from the one used in \cite{kmsw-bipolar} in that our compass directions are rotated by 45 degrees. In~\cite{kmsw-bipolar} the upward direction is defined to be north, so that the edges of $G$ are directed from south to north. The two poles of $G$ are interpreted as a south pole and a north pole, and the poles of the dual map are interpreted as a west pole and an east pole. The convention in \cite{kmsw-bipolar} is more natural in the context of planar maps, but we have chosen the above convention to be consistent with the continuum picture, where we interpret the two trees describing the left and right outer boundaries of the primal space-filling curve as west-going and east-going, respectively, rather than northwest-going and southeast-going. 
	\label{rmk:compass} 
\end{remark}

\subsubsection{Bipolar-oriented map and lattice walk}
\label{sec:bipolar-bijection} 

Suppose $(G,\cO)$ is a bipolar-oriented planar map embedded in the manner shown on the left in Figure~\ref{fig:intro-bipolarmap}, i.e.\ every edge is directed in upward direction. We will explain how to construct a pair of trees on $G$, which are illustrated in the middle part of Figure~\ref{fig:intro-bipolarmap}. Given any vertex $v$ in $G$ other than the southeast pole, let $\{e_1^v , \dots , e_{p^v}^v\}$ be the set of edges adjacent to $v$ that are oriented toward $v$. We assume that $e_1^v,\dots , e_{p^v}^v$ are ordered counterclockwise, such that if $v$ is the northwest pole then $e_1^v$ and $e_{p^v}^v$ lie on the boundary of the outer face, while if $v$ is not the northwest pole then the next edge in the counterclockwise direction after $e_{p^v}^v$ is an out-going edge. Now fix a small $\ep >0$, which we assume to be smaller than the minimal distance between any pair of vertices. For $j\in [2,p^v ]_{\BB Z}$, we shrink the edge $e_j^v$  by $\eps$, i.e.\ we replace it by the subpath of the edge from $u_j^v$ to the first time it hits the circle of radius $\eps$ centered at $u$,
where $u_j^v$ is the other endpoint of $e_j^v$. Since $\cO$ is bipolar, in every cycle at least one edge will be shrunk in this manner, so the resulting graph has no cycles. On the other hand, every edge is still connected to the southeast pole through edges of type $e_1^v$. Therefore this operation produces a plane tree rooted at the southeast pole that has the same number of edges as $G$. We call this tree the \emph{east-going tree}, denoted by $\cT^E$, and note that it is shown in red in Figure~\ref{fig:intro-bipolarmap}. By performing the shrinking operation on $(G,\ol\cO)$, we obtain a plane tree $\cT^W$ rooted at the northwest pole, which we call the \emph{west-going tree}, and which is shown in blue in Figure~\ref{fig:intro-bipolarmap}.

For each edge $e$ of $G$, there is a unique  path in $\mcl T^E$ (resp. $\mcl T^W$) from $e$ to the southeast pole (resp. the northwest pole). We refer to this branch as the \emph{east-going flow line} (resp. \emph{west-going flow line}) started from $e$. 

Suppose now $\ell , r  \in \BB N_0$ with $\ell + r > 0$ and that $G$ has $\ell  $ edges on its southwest boundary, $r $ edges on its northeast boundary 
with $r + \ell > 0$, and $n \in \BB N$ total edges. 
There is a unique path $\lambda'$ on $G$ (a map from $[0,n-1]_{\BB Z}$ to the set of edges of $G$) 
which starts at the leftmost outgoing edge from the southeast pole, ends at the rightmost incoming edge to the northwest pole, and snakes in between the two trees $\cT^E$ and $\cT^W$.
This path hits every edge of $G$ exactly once, and jumps across each face of $G$ exactly once. It is shown in green in Figure~\ref{fig:intro-bipolarmap}.

For $i\in [0,n-1]_{\BB Z}$ let $\cL_i$ (resp. $\cR_i$) be the distance from the top (resp.\ bottom) end point of the edge $\lambda'(i)$ to the northwest (resp.\ southeast) pole in the tree $\cT^W$ (resp.\ $\cT^E$). We define
\eqbn
\mcl Z_i := (\mcl L_i , \mcl R_i) .
\eqen 
As explained in~\cite{kmsw-bipolar}, $(\cZ_i )_{i \in [0,n-1]_{\BB Z} }$ is a lattice walk on $[0,\infty)_{\BB Z} \times [0,\infty)_{\BB Z}$ from $(\ell-1,0)$ to $(0,r-1)$. 
For $i \in [0,n-2]_{\BB Z}$, if $\lambda'(i)$ is the westmost incoming edge to some vertex, then $\mcl Z_{i+1} - \mcl Z_{i } =(-1,1)$. Otherwise, there must be a face $f$ containing $\lambda'(i)$ such that the terminal endpoint of $\lambda'(i)$ is the westmost vertex of $f$, and the edge $\lambda'(i)$ is ``shrunk'' in the construction of $\cT^E$. In this case the next step corresponds to $\lambda'$ traversing $f$. If $f$ has $\ell_f$ edges on the southwest side and  $r_f$ edges on the northeast side, then $\cZ_{i+1} - \cZ_i =(\ell_f-1,1-r_f)$.  
 
As shown in~\cite[Theorem 2]{kmsw-bipolar}, the map $(G , \cO) \mapsto \mcl Z$ is a bijection. We omit the details of the inverse bijection and refer to~\cite[Section 2]{kmsw-bipolar}, where $(G,\cO)$ is dynamically recovered from $\cZ$ by the so-called sewing procedure. Consequently, one can sample a random bipolar-oriented map $(G, \cO)$ with $\ell$ southwest boundary edges, $r$ northeast boundary edges, and $n$ total edges by first sampling a random walk $\cZ$ with certain allowed step sizes which starts $(\ell-1,0)$, at ends at $(0,r-1)$, and stays in the closed first quadrant. We will be particularly interested in the case of a uniformly random bipolar-oriented triangulation. In this case, the corresponding walk can be sampled as follows. Let $\mcl Z'$ be a simple random walk started from $(\ell-1,0)$ with $n$ total steps which are iid samples from the uniform distribution on $\{(1,0) , (0,-1), (-1,1)\}$. Then the law of $\mcl Z$ is the same as the conditional law of $\mcl Z'$ given that it stays in the closed first quadrant and ends at $(0,r-1)$.

\begin{remark}
	Note that the coordinates of the random walk are interchanged as compared to the convention applied in \cite{kmsw-bipolar}, i.e., that paper considers a random walk from $(0,\ell-1)$ to $(r-1,0)$ with iid steps chosen from $\{(0,1),(-1,0),(1,-1)\}$. We choose the above convention to be consistent with our continuum convention, where the first (resp.\ second) coordinate of the walk corresponds to the length of the left (resp.\ right) frontier of the space-filling curve. 
\end{remark}

\subsubsection{The uniform infinite bipolar-oriented triangulation}
\label{sec:bipolar-infinite}

The concept of a uniform infinite bipolar-oriented triangulation is best understood by considering a local limit of finite bipolar-oriented triangulations. Given planar maps $G$ and $G'$ with oriented root edges $e$ and $e'$, respectively, the \emph{Benjamini-Schramm distance}~\cite{benjamini-schramm-topology} from $(G,e)$ to $(G',e')$ is defined by 
\[
d_{\mathrm{loc}} = \inf\{2^{-r}:r\in \N, B_r(e;G)\simeq B_r(e';G')  \} ,
\]
where $B_r(e;G)$ is the ball of radius $r$ centered at the terminal endpoint of $e$ in the graph distance of $G$, and $B_r(e;G)\simeq B_r(e';G')$ means that the two balls are isomorphic. Then $d_{\op{loc}}$ defines a metric on the space of finite rooted planar maps. Let $\mathfrak{M}$ be the Cauchy completion of the space of finite rooted planar maps under this metric. Elements in $\mathfrak M$ which are not finite rooted maps are called infinite rooted planar maps. For more background on infinite rooted planar maps, we refer to the foundational paper~\cite{benjamini-schramm-topology}. 

For $n\in\BB N$ let $(G_n,\cO_n)$ be sampled uniformly from the set of all bipolar-oriented triangulations for which $G_n$ has 1 northeast boundary edge, 2 southwest boundary edges, and $n$ total edges. Let $\cZ^n$ be the lattice walk obtained when applying the bijection described in Section~\ref{sec:bipolar-bijection} to $(G_n,\cO_n)$. Then $\cZ^n$ is uniform among all lattice walks in $[0,\infty)^2_{\BB Z}$ of duration $n$ from from $(1,0)$ to $(0,0)$, with steps in $\{(1,0),(0,-1),(-1,1)\}$. According to \cite[Corollary 7]{kmsw-bipolar}, the number of such walks is  $c(1+o_n(1))n^{-4}\cdot 3^{3n}$. 

{It was shown in~\cite[Section 3.3]{ghs-map-dist} that}  if we uniformly choose an edge $e_n$ from $G_n$, whose direction is inherited from $\cO_n$, then $(G_n,e_n)$ weakly converges to a probability measure on $\mathfrak M$. 
The limiting object $(G,\cO,\BB e)$ is called the \emph{uniform infinite bipolar-oriented triangulation}. We note that $\cO$ is almost surely an acyclic orientation with no sinks and sources, since the two poles are at $\infty$. The orientation $\cO$ gives rise to a pair of trees on $G$, in the same manner as in the finite-volume case (see Figure~\ref{fig:intro-bipolarmap}). We define the discrete east-going and west-going flow lines of $(G , \cO)$ started from any edge $e$ of $G$ to be the branches of these two trees. These flow lines are paths from $e$ to $\infty$. There is a bi-infinite space-filling exploration path $\lambda'$ (a map from $\BB Z$ to the edge set of $G$) which snakes between the two trees, in direct analogy to the finite-volume case. The path $\lambda'$ satisfies $\lambda'(0) = \BB e$ and its left and right outer boundaries at the time it hits any edge $e$ of $G$ are given by the east-going and west-going flow lines started from $e$. 

By clockwise rotation of $90^\circ$, we obtain a dual bipolar-oriented map $(\wt G,\wt \cO ,\wt{\BB e})$ which is the Benjamini-Schramm limit of the finite-volume dual rooted bipolar-oriented maps $(\wt G_n,\wt\cO_n , \wt e_n)$. Here $\wt{\BB e}$ (resp. $\wt e_n$) is the edge of the dual map which crosses $\BB e$ (resp. $e_n$). The bipolar-oriented map $(\wt G,\wt \cO,\wt{\BB e})$ is associated with a north-going and south-going tree (which give rise to north-going and south-going flow lines and an exploration path $\wt\lambda'$) as well as a bi-infinite lattice walk $\wt\cZ = (\wt\cL , \wt\cR)$. Although $\wt\cZ$ is not a simple random walk, in Section~\ref{section:tight} we will show that $\wt\cZ$ converges to a Brownian motion in the scaling limit. The remainder of the paper will be devoted to proving that $(\cZ , \wt\cZ)$ jointly converges after rescaling.

\subsection{Background on SLE and LQG}
\label{sec:lqg-background}

In this subsection we provides some background on the theory of SLE and LQG, which is needed to describe the limiting objects in our main results.

\subsubsection{Liouville quantum gravity and the quantum cone}

Let $\gamma \in (0,2)$. A \emph{Lioville quantum gravity (LQG) surface} \cite{shef-kpz,shef-zipper,wedges} is an equivalence class of pairs $(D,h)$, where $D$ is a planar domain and $h$ is an instance of some variant of the Gaussian free field (GFF) on $D$. See~\cite{shef-gff,ss-contour,ig1,wedges,ig4} for more on the GFF. We say that two such pairs $(D,h)$ and $(\wt D , \wt h)$ are equivalent if there is a conformal map $\varphi:\wt D\to D$ such that
\eqb
\wt h = h\circ\varphi + Q\log|\varphi'| \quad \op{where} \quad Q=\gamma/2+2/\gamma .
\label{eqn:background1}
\eqe
One can also consider quantum surfaces with $k\in\BB N$ marked points $z_1,\dots , z_k \in D\cup \bdy D$. In this case we say that two such quantum surfaces $(D,h,z_1,...,z_k)$ and $(\wt D,\wt h,\wt z_1,...,\wt z_2)$ are equivalent if the condition $\varphi(\wt z_i)=z_i$, $\forall i=1,...,k$, is satisfied in addition to~\eqref{eqn:background1}.

It is proven in \cite{shef-kpz} that given an instance $h$ of a GFF one can a.s.\ define an area measure $\mu_h$ as the weak limit of $\ep^{\gamma^2/2}e^{\gamma h_\ep(z)}dz$ as $\ep\rta 0$, where $dz$ is Lebesgue measure on $D$, and $h_\ep(z)$ is the mean value of $h$ on the circle $\partial B_\ep(z)$  (as defined in \cite{shef-kpz}). The measure $\mu_h$ should be thought of as the Riemannian volume form associated with the metric tensor $e^{\gamma h} \, dx\otimes dy$. One may similarly define a length measure $\nu_h$ on certain curves in $D\cup\bdy D$ including $\partial D$ and $\op{SLE}_\kappa$ curves for $\kappa = \gamma^2$~\cite{shef-zipper, ps-elementary}. See \cite{rhodes-vargas-review} for an alternative, more general approach to defining $\mu_h$ and $\nu_h$. 
If $h$ and $\wt h$ are related as in~\eqref{eqn:background1}, then $\varphi$ pushes forward $\mu_{\wt h}$ to $\mu_h$ and $\nu_{\wt h}$ to $\nu_h$ (in fact this holds a.s.\ for all choices of $\varphi$ simultaneously~\cite{shef-wang-lqg-coord}), so these measures are well-defined functional of the quantum surface. 
 
In this paper we will be especially interested in a particular type of quantum surface called the $\alpha$-quantum cone for $\alpha<Q$. This is a doubly marked quantum surface $(\BB C ,h , 0, \infty)$ which has infinite total mass but finite mass in every neighborhood of 0. The quantum cone has a special interpretation for $\alpha=\gamma$, in which case it is the limiting law on quantum surfaces obtained by zooming in near a quantum typical point of a $\gamma$-LQG surface; see~\cite[Proposition 4.13(ii)]{wedges}.

\subsubsection{Space-filling SLE and imaginary geometry}
\label{sec:sle-background}

The Schramm-Loewner evolution \cite{schramm0} is a conformally invariant family of random fractal curves parameterized by $\kappa>0$, which has three well-known phases. For $\kappa\in(0,4]$ the curve is simple, for $\kappa'\in(4,8)$ the curve hits itself and creates ``bubbles'' but its trace has Lebesgue measure zero, and for $\kappa'\geq 8$ the curve fills space. In \cite[Sections 1.2.3 and 4.3]{ig4} the authors construct a space-filling version of SLE$_{\kappa'}$ for $\kappa'>4$, which agrees with ordinary SLE$_{\kappa'}$ for $\kappa'\geq 8$. Roughly speaking, space-filling SLE$_{\kappa'}$ for $\kappa'\in(4,8)$ is the same as ordinary SLE$_{\kappa'}$, except that the curve enters each ``bubble'' right after it is disconnected from the target point and fills it with a continuous space-filling loop before it continues the exploration towards the target point. 

The authors of \cite{ig4} construct space-filling SLE$_{\kappa'}$ by means of flow lines of a Gaussian free field \cite{ig1,ig2,ig3,ig4}. We will only describe the construction of whole-plane space-filling SLE$_{\kappa'}$ from $\infty$ to $\infty$, but only minor modifications are needed to describe space-filling SLE$_{\kappa'}$ in general domains. Let $\kappa = 16/\kappa' \in (0,4)$, 
\eqb \label{eqn:ig-chi}
\chi = 2/\sqrt{\kappa} - \sqrt{\kappa}/2,
\eqe
$\theta\in[0,2\pi)$, and let $\wh h$ be an instance of a whole-plane Gaussian free field modulo a global additive multiple of $2\pi\chi$. A flow line $\eta$ of $\wh h$ is an $\SLE_{\kappa}$-like curve for $\kappa\in(0,4)$ which is interpreted as a flow line of the vector field $e^{i(\wh h/\chi+\theta)}$, where $\chi$ and $\kappa$ are related as in \ref{eqn:ig-chi}.

Flow lines of $\wh h$ of the same angle $\theta$ started at different points in $\BB Q^2$ merge into each other upon intersecting and form a tree \cite[Theorem 1.9]{ig4}. The \emph{space-filling SLE$_{\kappa'}$ counterflow line of $\wh h$ of angle $\theta$} is the Peano curve of this tree, i.e., it is the curve which visits the points of $\BB C$ in chronological order, where a point $x\in\BB C$ is visited before a point $y\in\BB C$ if the flow line of angle $\theta$ from $x$ merges into the flow line of angle $\theta$ from $y$ on the left side. This curve is proven to be continuous when parametrized by Lebesgue measure in~\cite[Section 4.3]{ig4}.

For any point $z\in\BB C$ it holds a.s.\ that the left (resp.\ right), outer boundary of $\eta'$ when it first hits $z$ is given by the flow line of $\wh h$ started from $z$ of angle $\theta$ (resp.\ $\theta-\pi$). We refer to $\theta=0$ (resp.\ $\theta=\frac{\pi}{2},\theta=\pi,\theta=\frac{3\pi}{2}$) as north (resp.\ west, south, east) direction. The space-filling SLE$_{\kappa'}$ generated from flow lines of angle $\theta=\frac{\pi}{2}$ in the above construction has left and right boundaries given by west-going and east-going, respectively, flow lines, and travels in north direction. The space-filling SLE$_{\kappa'}$ generated from flow lines of angle $\theta=\pi$ has left and right boundaries given by south-going and north-going, respectively, flow lines, and travels in west direction, i.e., in a direction orthogonal to the north-going SLE$_{\kappa'}$.

Space-filling SLE$_{\kappa'}$ has very a different nature for $\kappa'\in(4,8)$ and $\kappa'\geq 8$. For $t\in\R$ and $\kappa'\geq 8$ the domain $\eta'((-\infty,t])$ is a.s.\ homeomorphic to the upper half-plane, while for $\kappa'\in(4,8)$ the domain $\eta'((-\infty,t])$ is an infinite chain of bounded simply connected domains. The reason for this difference is that the flow lines of angle $\theta$ and $\pi-\theta$ from any given point $z\in \BB C$ a.s.\ hit each other (resp.\ intersect only at $z$) for $\kappa'\in(4,8)$ (resp.\ $\kappa'\geq 8$) \cite[Theorem 1.7]{ig4}. If $(\eta'(t))_{t\in\R}$ is a space-filling SLE$_{\kappa'}$, $\tau_z$ is the first time $\eta'$ hits $z\in\BB C$, and we condition on $\eta'((-\infty,\tau_z])$, the conditional law of $\eta'|_{[\tau_z,\infty)}$ for $\kappa'\geq 8$ is an ordinary chordal SLE$_{\kappa'}$ in $\C\setminus \eta'((-\infty,\tau_z])$ from $\eta'(\tau_z)$ to $\infty$. If $\kappa'\in(4,8)$ the conditional law of $\eta'|_{[\tau_z,\infty)}$ is that of a concatenation of independent chordal space-filling SLE$_{\kappa'}$ curves in each complementary connected component of $\eta'((-\infty,t])$ \cite[Footnote 4]{wedges}.

It is proven in \cite[Theorem 1.10]{ig4} that $\wh h$ (viewed as a distribution modulo a global additive constant in $2\pi\chi\Z$) and the space-filling SLE$_{\kappa'}$ flow line $\eta'$ of $\wh h$ with a given angle $\theta$ a.s.\ determine each other. Hence, an instance of the north-going space-filling SLE$_{\kappa'}$ $\eta'$ a.s.\ determines the imaginary geometry field $\wh h$ and thereby also the west-going space-filling SLE$_{\kappa'}$ curve $\wt\eta'$. 

\subsubsection{The peanosphere}
\label{sec:peanosphere-background}

Let $\gamma\in(0,2)$, and consider a $\gamma$-quantum cone $(\C,h,0,\infty)$ decorated with an independent whole-plane space-filling SLE$_{\kappa'}$ curve $\eta'$ from $\infty$ to $\infty$, $\kappa'=16/\gamma^2$. In \cite[Theorem 1.9]{wedges} it is proven that $(h,\eta')$ can be encoded in terms of a two-dimensional Brownian motion $Z=(L_t,R_t)_{t\in\R}$ with the following variances and covariances:
\eqb
\op{Var}(L_t ) =  \alpha |t|, \quad 
\op{Var}(R_t) =  \alpha |t|, \quad 
\op{Cov}(L_t, R_t ) = -\alpha \cos\theta |t|,\quad 
\theta=\frac{4\pi}{\kappa'}, 
\label{eqn:correl} 
\eqe
where $\alpha$ is a deterministic constant depending only on $\kappa'$. {(The constant $\alpha$ was recently explicitly computed in~\cite{ars-fzz}.)}
The Brownian motion $Z$ is constructed from $(h,\eta')$ in the following manner. Suppose we parametrize $\eta'$ according to $\gamma$-quantum area with respect to $h$, i.e., for any $s,t\in\R$ satisfying $s<t$ we have $\mu_h(\eta'([s,t]))=t-s$, and we normalize to that $\eta'(0) = 0$. 
Then $Z$ is the left/right boundary length process of $\eta'$ with respect to $h$. That is, for $t \in \BB R$ the process $L_t$ (resp.\ $R_t$) gives the net change in quantum boundary length of the left (resp.\ right) outer boundary of $\eta'((-\infty,t])$ relative to time 0.

By \cite[Theorem 1.11]{wedges} the pair $(L,R)$ almost surely determines the pair $(h,\eta')$ (up to a rigid rotation of the complex plane about the origin), i.e., there is a measurable a.s.\ defined map which associates a realization of the pair $(L,R)$ with a $\gamma$-quantum cone decorated with an independent space-filling $\SLE_{\kappa'}$.

The relationship between $(h,\eta')$ is referred to as the \emph{peanosphere construction} since it shows that $(h,\eta')$ is an embedding into $\BB C$ of a random curve-decorated topological measure space constructed from $Z$ called a \emph{peanosphere}. 
See Figure~\ref{fig-peano} for an illustration of this construction.

\begin{figure}[ht!]
	\begin{center}
		\includegraphics[scale=0.92]{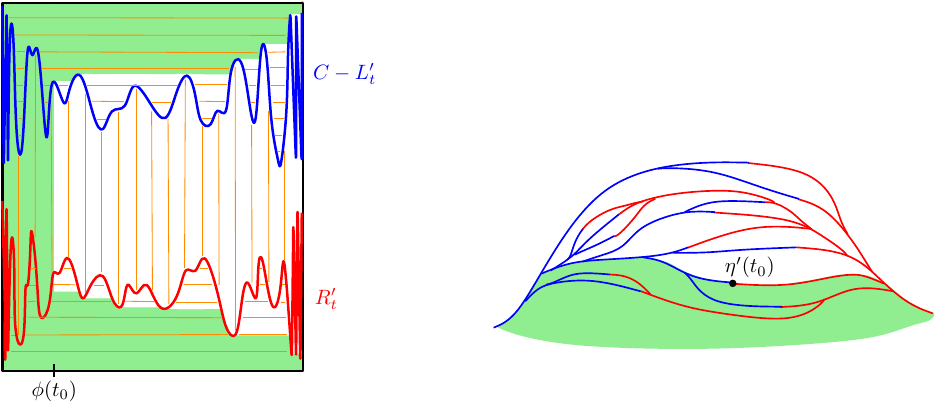}
	\end{center}
	\caption{\label{fig-peano} 
		An illustration of the peanosphere construction of \cite{wedges}. Let $Z=(L_t,R_t)_{t\in\R}$ be a correlated two-dimensional Brownian motion. Let $\phi:\R\to(0,1)$ be an increasing, continuous and bijective function, and for any $t\in(0,1)$ define $ L_t'  :=\phi(L_{\phi^{-1}(t)})$ and $ R_t' :=\phi(L_{\phi^{-1}(t)})$. The left figure shows $ R'$ and $C- L'$, where $C$ is a constant chosen so large that the two graphs do not intersect.
		We draw horizontal lines above the graph of $C- L'$ and below the graph of $ R'$, in addition to vertical lines between the two graphs, and then we identify points which lie on the same horizontal or vertical line segment.  We also identify all points on the boundary of the square.  As explained in~\cite[Proposition 1.7]{wedges}, it follows from  Moore's theorem \cite{moo25} the resulting object is a topological sphere. The sphere is decorated with a space-filling path $\eta'$ 
		where $\eta'(t)$ for $t \in \R$ is the equivalence class of $(\phi(t),L'_t)$. The pushforward of Lebesgue measure on $\R$ induces an area measure $\mu$ on the sphere. The resulting structure, i.e. the topological sphere with the curve $\eta'$ and the measure $\mu$, is called a \emph{peanosphere}. It is shown in \cite{wedges} that the peanosphere has a canonical embedding into $\C$ where the pushforward of $\mu$ encodes a LQG surface known as the $\gamma$-quantum cone and $\eta'$ is an independent space-filling $\SLE_{\kappa'}$, $\kappa'=16/\gamma^2$. The right part of the figure shows a subset of the SLE-decorated LQG surface, where the green region corresponds to points that are visited by $\eta'$ before some time $t_0$. The two trees are embeddings of the trees with contour functions $L$ and $R$, respectively, such that $L$ (resp.\ $R$) encode the quantum boundary length of the left (resp.\ right) frontier of $\eta'$. If $Z=(L_t,R_t)_{t \in [0,1]}$ was a Brownian excursion we would obtain a finite volume LQG surface decorated with an independent space-filling SLE by a similar procedure \cite{wedges,sphere-constructions}.
	}
\end{figure}

We now describe the limiting object in our main theorem. 
Let $\wh h$ be the instance of the whole-plane Gaussian free field (modulo a global additive multiple of $2\pi\chi$, with $\chi$ as in \eqref{eqn:ig-chi}) such that $\eta'$ is the north-going space-filling SLE$_{\kappa'}$ flow line of $\wh h$ (recall Section~\ref{sec:sle-background}). Let $\wt\eta'$ be the west-going space-filling SLE$_{\kappa'}$ flow line of $\wh h$, parameterized according to $\gamma$-LQG area with respect to $h$ and satisfying $\wt\eta'(0)=0$. 
Then $\wt\eta'$ (viewed modulo parametrization) is a.s.\ determined by $\eta'$ (viewed modulo parametrization), so is independent from $h$. Furthermore, $\wt\eta' \eqD \eta'$ so if we let 
$\wt Z = (\wt L_t , \wt R_t)_{t\in \R}$ be the left/right boundary length process for $\wt\eta'$ then $\wt Z \eqD Z$. 
This gives us a coupling of two correlated planar Brownian motions $Z$ and $\wt Z$ which a.s.\ determine each other.

\subsection{Statement of main result}
\label{sec:mainres}

Consider a uniform infinite bipolar-oriented triangulation $(G ,\cO,\BB e)$, 
and recall that it can be encoded by a random walk $\cZ=(\cL_n,\cR_n)_{n\in\Z}$ with iid 
increments in $\{(1,0),(0,-1),(-1,1)\}$. Let $(\wt G , \wt \cO, \wt{\BB e})$ 
be the dual bipolar-oriented map of $(G , \cO,\BB e)$, and 
let $\wt{\cZ}:=(\wt {\cL}_n,\wt {\cR}_n)_{n\in\Z}$ 
be the random walk encoding $(\wt G , \wt \cO, \wt{\BB e})$. 
Extend $\mcl L$, $\mcl R$, $\wt\cL$, and $\wt\cR$ to $\BB R$ by linear interpolation. For $m\in\BB N$ and $t\in\BB R$ define
\eqb \label{eqn:rescaled-walk}
\begin{split}
	L^m_t := (3\alpha /2)^{1/2}  m^{-1/2} \cL_{t m} ,\quad R^m_t :=  (3\alpha/2)^{1/2}  m^{-1/2} \cR_{t m} , \quad   Z^m_t:= (L^m_t , R^m_t), \\
	\wt L^m_t := (3\alpha/2)^{1/2} m^{-1/2} \wt \cL_{t m} ,\quad \wt R^m_t :=  (3\alpha/2)^{1/2}  m^{-1/2} \wt\cR_{t m} , \quad   \wt Z^m_t:= (\wt L^m_t , \wt R^m_t) ,
\end{split}
\eqe 
where $\alpha$ is as in~\eqref{eqn:correl} for $\gamma = \sqrt{4/3}$. The reason for including the factor $(3\alpha/2)^{1/2}$ is so that $Z^m$ converges in law to the Brownian motion in~\eqref{eqn:correl}. 

Let $Z=(L,R)$ be a two-dimensional correlated Brownian motion with correlation $-\frac 12$ and variance $\alpha$, where $\alpha>0$ is again as in \eqref{eqn:correl}. 
Recall from Section~\ref{sec:peanosphere-background} that $Z$ is the left/right boundary length process of a $\sqrt{4/3}$-quantum cone $(\C,h,0,\infty)$ decorated with an independent space-filling SLE$_{12}$ by the peanosphere construction. 
Let $\wt Z = (\wt L , \wt R)$ be the left/right boundary length process of the west-going space-filling SLE$_{12}$ determined by $\eta'$, as in Section~\ref{sec:peanosphere-background}.

\begin{theorem}
	Let $Z^m,\wt Z^m$ and $Z,\wt Z$ be as defined above. Then the following convergence holds in law for the topology of uniform convergence on compact sets:
	\eqbn
	(Z^m,\wt Z^m) \rtaD (Z,3\wt Z).
	\eqen
	\label{thm1}
\end{theorem}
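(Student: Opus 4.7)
The plan is to combine tightness with a continuum identification of subsequential limits. Joint tightness of $(Z^m, \wt Z^m)$ follows from marginal tightness: $Z^m \Rightarrow Z$ is the KMSW scaling limit proved in \cite{kmsw-bipolar}, and $\wt Z^m$ converges marginally to a Brownian motion by the results to be established in Section \ref{section:tight}. Along any convergent subsequence we thus obtain a limit $(Z, \wh Z)$ with the correct marginals, so what remains is to show that $\wh Z = 3\wt Z$ a.s., where $\wt Z$ is defined deterministically from $Z$ through the GFF $\wh h$ and the orthogonal space-filling SLE$_{12}$ curve $\wt\eta'$.

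The key identification tool is the continuum description developed in Section \ref{sec:continuum-decomp}, which expresses the $\theta$-angle flow lines of $\wh h$ (and consequently $\wt\eta'$ and its boundary length process $\wt Z$) as explicit measurable functionals of the peanosphere Brownian motion $Z$. These functionals involve delicate local features of the path such as $\pi/2$-cone times that encode coalescence of flow lines. On the discrete side, the north- and south-going flow lines of $(\wt G, \wt \cO)$ have combinatorial analogs inside the primal map: they correspond to specific sequences of faces of $G$ visited by the exploration path $\lambda'$, whose lengths and interactions can be read off from $\cZ$ alone. I would therefore define discrete versions of the Section \ref{sec:continuum-decomp} functionals, show that they recover $\wt \cZ$ up to lower-order error, and verify that when evaluated on the rescaled walks they converge to the continuum functionals applied to $Z$ in any joint scaling limit. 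This forces $\wh Z = 3\wt Z$ and yields the theorem; the factor $3$ is a deterministic normalization coming from comparing step variances of the primal and dual walks for the uniform infinite bipolar-oriented triangulation, and is identified as part of Section \ref{section:tight}.

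The main obstacle I foresee is precisely this discrete-to-continuum matching of orthogonal flow lines. The continuum functionals of $Z$ that recover $\wt Z$ are sensitive to singular features of the Brownian path (cone times, coalescence points of distinct flow lines, non-self-intersection of boundary arcs), so verifying that their discrete analogs on $\cZ^m$ converge jointly with $\cZ^m$ itself requires uniform local geometric control of the bipolar-oriented triangulation near these singular configurations. One must in particular rule out pathological events, such as microscopic flow-line bubbles causing the dual walk to behave atypically, with enough uniformity in $m$ to pass to weak limits. Once this technical control is in place, the proof reduces to a tightness-plus-characterization argument, with Section \ref{sec:continuum-decomp} supplying the uniqueness of the joint limit.
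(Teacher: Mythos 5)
Your high-level skeleton (tightness from the marginals, then identification of the subsequential limit $\wh Z$ with $3\wt Z$ using the continuum flow-line description of Section~\ref{sec:continuum-decomp} and its discrete analogue) is the paper's strategy. However, there is a genuine gap at the crux of your identification step. You assert that Section~\ref{sec:continuum-decomp} expresses the flow lines, and hence $\wt Z$, as \emph{explicit} measurable functionals of $Z$ (cone times, etc.), so that it would suffice to check that discrete analogues of these functionals converge to the continuum ones. The paper explicitly states the opposite: the map from $Z$ to the crossing set $\mcl A$ is some a.s.\ defined functional that the authors \emph{cannot} describe explicitly, and in particular it is not known to be continuous. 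Proposition~\ref{prop:ppp} only gives an indirect, excursion-theoretic description of the joint law of $(Z,\mcl A)$. Consequently, "discrete functionals converge to continuum functionals" is not an available argument: even after one shows that the rescaled discrete excursion structure of $\cZ$ away from the dual flow line converges to the continuum excursion structure $(W,X)$, a subsequential limit $Z'$ could a priori be a \emph{different} Brownian motion whose excursions away from $\mcl A$ agree with those of $Z$ but are glued together differently.

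Closing this gap is the real content of Sections~\ref{sec:continuum-decomp} and~\ref{sec:flowline-conv}: Proposition~\ref{prop:bm-determined} shows that the excursion process $W$ a.s.\ determines $Z$, via a martingale argument (Lemma~\ref{prop:2bm-same-excursion}) in which $Z-Z'$ is shown to have zero quadratic variation because it can only vary on $\mcl A$, a set of Minkowski dimension $1/2$; the same mechanism is then redeployed in Section~\ref{sec:triple-conv-proof} to prove the joint convergence $(Z^m,X^m)\rta(Z,X)$ (Proposition~\ref{prop:triple-conv}). Your proposal contains no substitute for this uniqueness step, and your stated "main obstacle" (uniform control near microscopic flow-line bubbles) is not where the difficulty lies. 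Two further ingredients you omit: (a) the dual walk $\wt\cZ$ runs in dual exploration time, so one needs Proposition~\ref{prop:merge-tight} (tightness of the area trapped between primal and dual flow-line pairs) to show the dual curve reaches the relevant edge in $O_m(m)$ steps; and (b) the value of $\wt R^m$ at that time is a difference of discrete local times, so the identification of the limit goes through the Borodin local-time invariance principle and the scaling identity of Proposition~\ref{prop:local-time}, with the constant $c=2$ (hence the factor $3$) pinned down only at the end by a variance comparison, not purely by the step-variance computation of Section~\ref{section:tight}.
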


\begin{remark} \label{remark:primal-tree} 
	In Theorem~\ref{thm1}, the second (west-going) Peano curve under consideration is defined in terms of the north-going and south-going trees on the dual bipolar-oriented map $(\wt G  , \wt \cO, \wt{\BB e})$. There is also another natural way to define a west-going Peano curve associated with a bipolar-oriented map $(G , \cO,\BB e)$. Namely, in Figure~\ref{fig:intro-bipolarmap} we define a new red tree (the \emph{primal north-going tree}) rooted at the source by cutting each edge other than the \emph{right}most edge leading upward from each vertex and define a new blue tree (the \emph{primal south-going tree}) rooted at the sink by cutting each edge except for the \emph{left}most edge leading downward from each vertex. Let $\dot\lambda'$ be the Peano curve associated with these two new trees (defined in an analogous manner to the original Peano curve $\lambda'$). Then $\dot\lambda'$ travels in the west direction, rather than the north direction. We can encode $(G , \cO,\BB e)$ by a different random walk $\dot{\mcl Z} = (\dot\cL ,\dot\cR)$ by using these two new trees instead of the original east-going and west-going trees. If we perform this construction for the uniform infinite bipolar-oriented triangulation, then the walk $\dot{\mcl Z}$ is a bi-infinite random walk with the same law as the original walk $\mcl Z$. It will be shown in Proposition~\ref{prop:twoNtrees} that if we define $\dot Z^m$ as in~\eqref{eqn:rescaled-walk}, then the scaling limit of $\dot Z^m$ is the same as the scaling limit of the re-scaled walk $\wt Z^m$ of Theorem~\ref{thm1}, up to a constant factor. In particular, one has
	\eqbn
	(Z^m,\dot Z^m) \rtaD (Z, \wt Z) ,
	\eqen
	with $(Z , \wt Z)$ as in Theorem~\ref{thm1}. This is the version of Theorem~\ref{thm1} which is conjectured in~\cite{kmsw-bipolar}. 
\end{remark}

\subsection{Outline}
\label{sec:outline}

The remainder of this article is structured as follows. 
In Section~\ref{sec:one-dual-tree}, we prove some properties of the dual bipolar-oriented map $(\wt G , \wt\cO, \wt{\BB e})$ of a uniform infinite bipolar-oriented triangulation using combinatorial techniques. In particular, we describe the discrete north-going and south-going flow lines (i.e.\ the branches of the two trees which decorate the dual map) in terms of the primal walk $\cZ$; and we prove that the dual walk $\wt\cZ$ converges in law to a correlated two-dimensional Brownian motion in the scaling limit. 

In Section~\ref{sec:continuum-decomp}, we describe the joint law of the peanosphere Brownian motion $Z$ and the set of times when the space-filling SLE$_{\kappa'}$ curve $\eta'$ crosses the $\theta$-angle flow line of the whole-plane GFF used to construct $\eta'$ in terms of a certain Poisson point process of conditioned Brownian motions. This is done for general values of $\kappa' >4$ and $\theta \in (-\pi/2,\pi/2)$, although we only need the case when $\kappa'= 12$ and $\theta =0$ for the proof of Theorem~\ref{thm1}. 
In the case when $\kappa' = 12$, this description is an exact continuum analogue of the description of the times when the north-going Peano curve on a uniform infinite bipolar-oriented triangulation crosses the north-going discrete flow line in terms of the associated walk $\mcl Z$. 

In Section~\ref{sec:flowline-conv} we prove that the joint law of $\cZ$ and the random walk $\cX$ which encodes the excursions of $\cZ$ away from a north-going discrete flow line, appropriately rescaled, converges to the joint law of their continuum counterparts. In Section~\ref{sec:peano-conv}, we conclude the proof of Theorem~\ref{thm1}.

\section{Properties of the dual map}
\label{sec:one-dual-tree}

\subsection{Discrete decomposition of contour functions}
\label{sec:excursions}

Let $\Nz:=\N\cup\{0\}$. In this section we define random one-dimensional paths $\cX = (\cX_n)_{n\in\Nz}$ and $\cl = (\cl_n)_{n\in\Nz}$ which will be discrete processes adapted to the filtration generated by the random walk $\mcl Z = (\cL,\cR)$ which encodes a uniform infinite bipolar-oriented triangulation $(G , \cO , \BB e)$. Let $\lambda'$ be the space-filling exploration path of $(G , \cO,\BB e)$ and let $\lambda^N$ be the north-going discrete flow line started from $\lambda'(0)$ (i.e.\ the infinite branch of the dual north-going tree which begins at the edge of the dual map $\wt G$ which crosses $\lambda'(0)$); recall Section~\ref{sec:bipolar-bijection}. 

For each time $n\in\Nz$ the path $\lambda'$ is either in an \emph{east} excursion or in a \emph{west} excursion. We are in a west excursion at time $n$ if both end-points of the edge $\lambda'(n)$ edge are on the west side of $\lambda^N$, and we are in an east excursion at time $n$ if at least one end-point of the corresponding edge is on the east side of $\lambda^N$. Note that this description is not symmetric in east excursions and west excursions. We have chosen this definition of east and west excursions since it makes the description of the times we are in an east (resp.\ west) excursion particularly simple in terms of $(\cL,\cR)$. We define $N_k^W$ (resp.\ $N_k^E$), $k\in\N$, to be the times at which a west (resp.\ east) excursion starts. More precisely, we define $N_k^W$ (resp.\ $N_k^E$) to be the first time after time $N_{k}^E$ (resp.\ $N_{k-1}^W$) for which $\lambda'$ is at an edge strictly on the west side of $\lambda^N$ (resp.\ at an edge which crosses $\lambda^N$). See Figure \ref{fig1}. For any $n\in\BB N$ let $\cl_n$ be the number of edges on the north-going flow line $\lambda^N$ which are crossed by $\lambda'$ during the time interval $[0,n]_{\BB Z}$. The random walk $\cX = (\cX_n)_{n\in\Nz}$ is defined by $\cX_0=0$ and
\eqb
\cX_n-\cX_{n-1}=
\begin{cases}
	\cL_{n-1} - \cL_{n} &\mathrm{if}\,\, \exists k\in\N \mathrm{\,s.t.\,\,} N_k^E     < n\leq N_k^W,\\
	\cR_{n} - \cR_{n-1} &\mathrm{if}\,\, \exists k\in\N \mathrm{\,s.t.\,\,} N_{k-1}^W < n\leq N_k^E.
\end{cases}
\label{eqn:Xm}
\eqe
The following lemma implies that $\cX$ is a random walk with iid increments, since $N_k^W,N_k^E$, $k\in\N$, are stopping times for $\cX$. In fact, $\mcl X$ has the same marginal law as $\mcl L$ and $\mcl R$.
\begin{lem}
	The times $N_k^W,N_k^E$, $k\in\N$, are stopping times for $\cX$ (hence also for $\cZ$), and are given by
	\eqb
	\begin{split}
		N_1^E &= 0,\\ 
		N_k^W &= \inf\{ n\geq N_k^E     \,:\,\cL_n=\cL_{N_k^E}-1 \}
		= \inf\{ n\geq N_k^E     \,:\,\cX_n=1 \},\\
		N_k^E &= \inf \{ n\geq N_{k-1}^W\,:\,\cR_n= \cR_{N_{k-1}^W}-1\}
		= \inf \{ n\geq N_{k-1}^W\,:\,\cX_n=0\}.
	\end{split}
	\label{descrdec-eq3}
	\eqe
	Moreover, the edge $\lambda'(n)$ crosses the north-going flow line $\lambda^N$ if and only if $\cX_n=0$. In particular, $\cl_n= \#\{0\le k\le n: \cX_k=0 \}$, which is the local time at 0 for $\cX$. 
\end{lem}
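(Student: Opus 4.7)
\emph{Setup and base case.} I would prove all four claims simultaneously by induction on $k$, based on a geometric analysis of how the primal edges crossed by the dual north-going flow line $\lambda^N$ sit relative to the west-going tree $\cT^W$. The base case is immediate: $N_1^E = 0$ by convention, $\cX_0 = 0$, and $\lambda'(0) = \BB e$ is crossed by the starting dual edge $\wt{\BB e}$ of $\lambda^N$.

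\emph{East-excursion step.} Assume the claims hold up to $N_k^E$, so that $\lambda'(N_k^E) \in \lambda^N$ and $\cX_{N_k^E} = 0$. The key geometric fact to establish is that $\cL_n \ge \cL_{N_k^E}$ throughout the east excursion $[N_k^E, N_k^W)$, with equality if and only if $\lambda'(n) \in \lambda^N$. Indeed, any edge $e$ visited during this excursion is either on $\lambda^N$ or strictly east of it; in the latter case the west-going flow line from $e$ must first cross $\lambda^N$ before reaching the NW pole, so $\cL_e$ strictly exceeds the $\cT^W$-height of the first crossed edge. A face-by-face combinatorial check then shows that all primal edges of $\lambda^N$ visited in a single east excursion carry the common $\cT^W$-height $\cL_{N_k^E}$. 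Since $\cL$ has unit steps, the excursion ends at the first $n$ with $\cL_n = \cL_{N_k^E} - 1$, which must be a $(-1,1)$-step taking $\lambda'$ around a vertex on $\lambda^N$ to an edge strictly west of $\lambda^N$. Telescoping the definition of $\cX$ yields $\cX_n = \cL_{N_k^E} - \cL_n$ on $[N_k^E, N_k^W]$, so $\cX_n = 1$ exactly at $N_k^W$.

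\emph{West-excursion step and remaining claims.} A symmetric argument handles the west excursion $[N_k^W, N_{k+1}^E)$, with $\cT^E$ and $\cR$ replacing $\cT^W$ and $\cL$: one has $\cR_n \ge \cR_{N_k^W}$ until $\lambda'$ first returns to $\lambda^N$ via a $(0,-1)$-step, at which point $\cR$ has dropped by exactly $1$ and $\cX_{N_{k+1}^E} = 0$. The stopping-time assertion is then immediate from the $\cX$-hitting-time expressions for $N_k^E, N_k^W$. The crossing characterization follows from the excursion analyses: within an east excursion, $\cX_n = 0 \Leftrightarrow \cL_n = \cL_{N_k^E} \Leftrightarrow \lambda'(n) \in \lambda^N$; within a west excursion $\cX_n \ge 1$. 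The local-time formula $\cl_n = \#\{0 \le k \le n : \cX_k = 0\}$ is then a direct count of these occurrences.

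\emph{Main obstacle.} The crux is the geometric lemma that all primal edges of $\lambda^N$ visited within one east excursion share the common $\cT^W$-height $\cL_{N_k^E}$ (and its symmetric counterpart for west excursions and $\cT^E$). This is a finite combinatorial statement about how the dual north-going tree selects its edges at each face in relation to $\cT^W$, but it is delicate because of the several orientation conventions in play (upward edge direction for the primal, $90^\circ$ clockwise rotation for the dual, and the leftmost/rightmost edge rules defining both the primal and dual trees); once this lemma is established, everything else follows by the telescoping and counting arguments above.
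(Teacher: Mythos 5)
Your proposal is correct and follows essentially the same route as the paper: decompose time into east/west excursions, show that all primal edges crossed by $\lambda^N$ within one excursion share a common height in $\cT^W$ (resp.\ $\cT^E$) while the non-crossing edges have strictly larger height, identify the excursion endpoints as the first unit drop of $\cL$ (resp.\ $\cR$), and telescope the definition of $\cX$. The ``face-by-face combinatorial check'' you flag as the crux is exactly what the paper carries out via its classification of crossing times into types (a) and (b) together with the observation that consecutive $\lambda^N$-edges share their westernmost (resp.\ easternmost) vertex, so no idea is missing.
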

\label{prop:discrete-excursions}

\begin{proof}
	Consider an edge of the bipolar map which is crossed by $\lambda^N$. We say that a time $n\in\BB N$ is of type (a) (resp.\ type (b)) if the edge $\lambda'(n)$ crosses the north-going flow line $\lambda^N$ and the edge $\lambda'(n+1)$ is on the west side of $\lambda^N$ (resp.\ either crosses $\lambda^N$ or is on the east side of $\lambda^N$). See Figure \ref{fig2}. 
	
	For any $k\in\N$ it follows by the definition of $N_k^W$ that $N^W_k-1$ is a time of type (a). By definition of $N_{k+1}^E$ and $\lambda'$, $\lambda'(N_{k+1}^E)$ is the edge which follows the edge $N_k^W-1$ along the path $\lambda^N$. During the time interval $[N_k^W,N_{k+1}^E)$ the space-filling path $\lambda'$ is traversing the subtree of the east-going tree which is rooted at the edge $\lambda'(N_k^W-1)$. In particular, $\cR_{N_k^W}=\cR_{N_k^W-1}+1$. Since the easternmost vertex is identical for the edges $\lambda'(N_{k}^W-1)$ and $\lambda'(N_{k+1}^E)$ it holds that $\cR_{N_{k}^W-1}=\cR_{N_{k+1}^E}$. See Figure \ref{fig2}(a). It follows that $N^E_{k+1}$ is given by \eqref{descrdec-eq3}, i.e., $N^E_{k+1}$ is the first time after $N_k^W$ at which the height in the east-going tree has decreased by one, or equivalently the first time $\cX$ hits 0. Furthermore, no edges visited by $\lambda'$ during $[N^W_k,N^E_{k+1})$ are crossed by $\lambda^N$, which is consistent with the formula for $\cl$ in the statement of the lemma since $\cX>0$ throughout the interval.
	
	By definition of $N_k^W$ and the space-filling path it holds that given $N_k^E$ for some $k\in\N$ we have $N_k^W=n+1$, where $n$ is the first time after $N_k^E$ of type (a). If $n'\in[N_k^E,N_k^W)$ is a time of type (b), then the next edge visited by $\lambda^N$ after $\lambda'(n')$ has the same height in the west-going tree as $\lambda'(n')$, since the westernmost vertex of these two edges is the same. See Figure \ref{fig2}(b). Hence it holds that $\cL_{n'}=\cL_{N_k^E}$ and $\cX_{n'}=\cX_{N_k^E}=0$ for all $n'<[N_k^E,N_k^W)$ for which $\lambda'(n)$ crosses $\lambda^N$. In particular, $\cL_{N_k^W-1}=\cL_{N_k^E}$ and $\cX_{N_k^W-1}=\cX_{N_k^E}=0$, since $\lambda'(N_k^W-1)$ crosses $\lambda^N$ by definition of $N_k^W$. Furthermore, any $n'\in [N_k^E,N_k^W)$ for which $\lambda'(n')$ does not cross $\lambda^N$ satisfies $\cL_{n'}>\cL_{N_k^E}$, since $\lambda'(n')$ is an edge in a subtree of the east-going tree which is rooted at an edge which crosses $\lambda^N$, hence $\cX_{n'}<\cX_{N_k^E}=0$. These observations imply our formula for the local time $\cl$. Since $ N_k^W-1 $ is a time of type (a), it holds that $\cL_{N_k^W}=\cL_{N_k^W-1}-1$ and $\cX_{N_k^W}=\cX_{N_k^W-1}+1=1$. Combined with the above observations this implies that $N^W_k$ is given by \eqref{descrdec-eq3}.
\end{proof}

\begin{figure}[ht!]
	\begin{center}
		\includegraphics[scale=1.1]{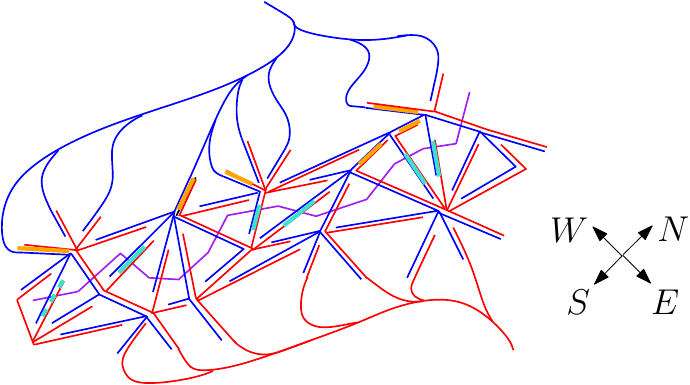}
	\end{center}
	\caption{The purple curve shows the north-going flow line $\lambda^N$. The process $\cL$ (resp.\ $\cR$) gives the height in the blue west-going (resp.\ red east-going) tree. The light blue (resp. orange) thick lines represent the stopping times $N_k^E$ (resp. $N_k^W$), and the time $N_1^E=0$ is shown in dashed light blue.} 
	\label{fig1}
\end{figure}

\begin{figure}[ht!]
	\begin{center}
		\includegraphics[scale=1.3]{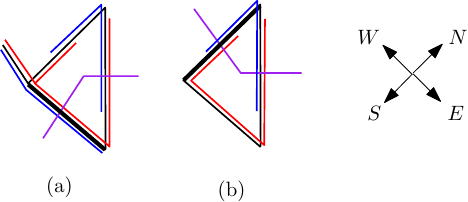}
	\end{center}
	\caption{The thick black edges correspond to times of type (a) and (b), respectively, as defined in the proof of Lemma \ref{prop:discrete-excursions}. 
	The north-going flow line $\lambda^N$ is shown in purple, and the red (resp.\ blue) edges are part of the east-going (resp.\ west-going) tree. } 
	\label{fig2}
\end{figure}

\begin{lem}
	Fix $\ep>0$.	For any $M \in \BB N$, 
	\eqbn
	\BB P\left[\sup_{0\leq k\leq M} |k-\frac{1}{3}\frk l_{N_k^W}|> M^{1/2+\ep}\right] = o_M(M^{-p}) ,\quad \forall p > 0 ,
	\eqen
	at a rate depending only on $\ep$. 
	\label{prop:primal-dual-length-same}
\end{lem}

\begin{proof}
	By Lemma \ref{prop:discrete-excursions} the random variables $Y_k:=\frk l_{N^W_{k+1}}-\frk l_{N^W_{k}}$ are iid geometric random variables with a success probability $1/3$, i.e., for any $p\in\N$ it holds with probability $(2/3)^{p-1}(1/3)$ that $Y_k=p$. In particular, $\E[Y_k]=3$ for each $k\in\BB N$. The lemma now follows by a union bound and elementary estimates for sums of iid geometric random variables (see, e.g., \cite[Theorems 2.1 and 3.1]{janson-tail}).
\end{proof}

\subsection{Equivalence of west-going Peano curve in primal and dual map}
\label{section:tight}

There are two natural west-going Peano curves associated with an instance of an infinite volume bipolar-oriented map: one Peano curve $\wt\lambda'$ tracing the interface between the north-going and south-going tree in the dual map $(\wt G,\wt{\mcl O},\wt{\BB e} )$, and one Peano curve $\dot\lambda'$ tracing the interface between the north-going and south-going tree in the primal map $(G,\mcl O,\BB e)$. See Remark \ref{remark:primal-tree}. In this section we will prove that the two discrete random walks $\wt{\mcl Z}$ and $\dot{\mcl Z}$ encoding each of these Peano curves converge jointly to the same scaling limit (up to a constant). Once we have proved our main result Theorem~\ref{thm1}, this will imply a version of Theorem \ref{thm1} with the random walk $\dot{\mcl Z}$ instead of $\wt{\mcl Z}$. The random walk $\wt{\mcl Z}=(\wt{\mcl L},\wt{\mcl R})$ was defined in Section \ref{sec:mainres}, and encodes the west-going Peano curve $\wt\lambda':\BB Z\to E(\wt G)$ in the dual map. The random walk $\dot{\mcl Z}=(\dot{\mcl L},\dot{\mcl R})$ is the corresponding random walk for $\dot{\lambda}'$, i.e., $\dot{\mcl L}$ (resp.\ $\dot{\mcl R}$) encodes the height in the south-going (resp.\ north-going) tree in the primal map relative to time 0. Assume $\dot\lambda'$ is normalized such that $\wt\lambda'(0)=\wt{\BB e}$ and $\dot\lambda'(0)=\BB e$. The random walk $\dot{\mcl Z}$ has iid increments in $\{(-1,1),(1,0),(0,-1)\}$ by symmetry with the north-going Peano curve $\lambda'$ in $(G,\mcl O,\BB e)$. Extend $\dot{\mcl Z}$ to a function on $\BB R$ by linear interpolation, and for each $m\in\BB Z$ and $t\in\BB R$ define the renormalized version of $\dot{\mcl Z}$ by
\eqb \label{eqn:rescaled-walk'}
\begin{split}
	\dot L^m_t := (3\alpha /2)^{1/2}  m^{-1/2} \dot{\cL}_{t m} ,\quad \dot R^m_t :=  (3\alpha/2)^{1/2}  m^{-1/2} \dot{\cR}_{t m} , \quad   \dot Z^m_t= (\dot L^m_t , \dot R^m_t),
\end{split}
\eqe 
where $\alpha$ is as in~\eqref{eqn:correl} for $\gamma = \sqrt{4/3}$. The main result of this section is the following proposition.

\begin{prop}
	The pair $(\wt Z^m,\dot Z^m)$ converges in law as $m\rta\infty$ to $(3Z,Z)$, where $Z$ is the two-dimensional correlated Brownian motion considered in Theorem \ref{thm1}.	
	\label{prop:twoNtrees}	
\end{prop}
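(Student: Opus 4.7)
The plan is to reduce Proposition~\ref{prop:twoNtrees} to Donsker's invariance principle for $\dot{\mcl Z}$ by establishing a pathwise approximation $\wt{\mcl Z}\approx 3\dot{\mcl Z}$ at the scale $m^{1/2}$, after which the joint convergence follows from Slutsky's theorem.

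\textbf{Step 1 (Donsker for $\dot{\mcl Z}$).} By symmetry with the primal north-going walk $\mcl Z$, the increments of $\dot{\mcl Z}$ are iid, mean zero, and uniform on $\{(-1,1),(1,0),(0,-1)\}$, with variance $\tfrac{2}{3}$ per coordinate and covariance $-\tfrac{1}{3}$. The rescaling constant $(3\alpha/2)^{1/2}m^{-1/2}$ in~\eqref{eqn:rescaled-walk'} is calibrated exactly so that the second-moment structure of $\dot Z^m$ matches that of the target Brownian motion of covariance~\eqref{eqn:correl} with $\gamma=\sqrt{4/3}$; hence the multivariate Donsker invariance principle yields $\dot Z^m\to Z$ in law, uniformly on compact sets.

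\textbf{Step 2 (Pathwise comparison).} The core step is to prove
\[
\sup_{|n|\le Tm}\bigl(\,|\wt{\mcl L}_n - 3\dot{\mcl L}_n| + |\wt{\mcl R}_n - 3\dot{\mcl R}_n|\,\bigr) \;=\; o(m^{1/2})
\]
with probability tending to $1$ as $m\to\infty$, for each fixed $T>0$. I would do this by setting up a west-going analog of the excursion decomposition of Section~\ref{sec:excursions}: instead of decomposing the primal north-going Peano curve $\lambda'$ into east/west excursions relative to a primal north-going flow line, one decomposes the primal west-going Peano curve $\dot\lambda'$ into north/south excursions relative to a south-going dual flow line. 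This produces a one-dimensional iid walk (the west-going analog of $\mcl X$) together with its local time at $0$ (the west-going analog of $\mcl l$). The symmetric version of Lemma~\ref{prop-primal-dual-length-same} then says that this local time is equal to $\tfrac{1}{3}n$ up to an error $o(n^{1/2+\eps})$. Expressing $\wt{\mcl L}_n$ and $\wt{\mcl R}_n$ as functionals of the primal data via this auxiliary decomposition gives the desired approximations $\wt{\mcl L}_n\approx 3\dot{\mcl L}_n$ and $\wt{\mcl R}_n\approx 3\dot{\mcl R}_n$. The factor of $3$ is exactly the mean of the geometric variables appearing in the proof of Lemma~\ref{prop-primal-dual-length-same} and is an avatar of the triangularity of the primal faces (equivalently, of the $3$-regularity of the dual map).

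\textbf{Step 3 (Joint convergence).} Combining Steps~1 and~2 via Slutsky's theorem gives $(\wt Z^m,\dot Z^m)\to(3Z,Z)$ in law for the topology of uniform convergence on compact sets.

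The main obstacle is Step~2. The two Peano curves $\wt\lambda'$ and $\dot\lambda'$ visit the common underlying edge set in different orders, so the approximation $\wt{\mcl Z}_n\approx 3\dot{\mcl Z}_n$ cannot be read off by a direct term-by-term matching. Instead the comparison must be made indirectly, by writing both walks as functionals of a common combinatorial object (the length of a dual flow line) and then invoking the concentration estimate of Lemma~\ref{prop-primal-dual-length-same} together with a union bound over $|n|\le Tm$, using standard tail bounds for sums of iid geometric variables to upgrade from a pointwise to a uniform approximation.
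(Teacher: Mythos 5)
Your architecture (Donsker for $\dot Z^m$, a pathwise estimate $\sup_{0\le n\le Tm}|\wt{\mcl Z}_n-3\dot{\mcl Z}_n|=o(m^{1/2})$ with high probability, then Slutsky) is the paper's, and you correctly trace the factor $3$ to the mean of the geometric variables behind Lemma~\ref{prop-primal-dual-length-same}. But Step~2 has a genuine gap, beginning with your ``main obstacle'': you assert that $\wt\lambda'$ and $\dot\lambda'$ visit the common edge set in different orders, whereas in fact they visit it in \emph{exactly the same order} --- this is Lemma~\ref{prop-primal-dual-order}, an exact combinatorial identity, and it is indispensable: it is what makes $\dot{\mcl R}_n$ and $\wt{\mcl R}_n$ the heights of the \emph{same} edge in the primal and dual north-going trees, so that a comparison at a fixed time index $n$ is even meaningful. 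Having denied this, your proposed substitute --- a ``west-going analog'' of the Section~\ref{sec:excursions} decomposition, cutting $\dot\lambda'$ into excursions relative to a south-going dual flow line --- does not set up correctly: by (the south-going version of) Lemma~\ref{prop-pd-trees-same}, a south-going dual branch runs parallel to a primal south-going branch, i.e.\ to the \emph{left boundary} of $\dot\lambda'$, not transversal to it, so there is no excursion structure there; and in any case no such decomposition by itself relates the height of an edge in the dual north-going tree to its height in the primal north-going tree. You also misquote the concentration estimate: Lemma~\ref{prop-primal-dual-length-same} says $\frk l_{N_k^W}\approx 3k$ (local time evaluated at the $k$th west-excursion stopping time), not $\frk l_n\approx n/3$; the local time $\frk l_n$ at a generic time $n$ is of order $\sqrt n$.

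The missing bridge is Lemma~\ref{prop-pd-trees-same}: the primal and dual north-going branches started from the same edge run parallel and merge simultaneously. Granting that, one writes $\dot{\mcl R}_n=k_1-k_2$, where $k_1,k_2$ are the lengths of the primal north-going branches from $\dot\lambda'(n)$ and $\dot\lambda'(0)$ up to their merge point, and the parallelism identifies the corresponding dual branch lengths with the local times $\frk l^n$ and $\frk l$ of the \emph{original} north-going decomposition of Section~\ref{sec:excursions} (re-rooted at $\dot\lambda'(n)$ and at $\BB e$) evaluated at the stopping times $N^{n,W}_{k_1+1}$ and $N^W_{k_2+1}$. Only then does Lemma~\ref{prop-primal-dual-length-same} give
\eqbn
\left|\dot{\mcl R}_n-\tfrac13\wt{\mcl R}_n\right|\le \sup_{0\le k\le M}\left(\left|k-\tfrac13\frk l^n_{N^{n,W}_k}\right|+\left|k-\tfrac13\frk l_{N^W_k}\right|\right),
\eqen
with $M$ controlled by the oscillation of $\dot{\mcl R}$, after which the union bound over $n\le Tm$ and the symmetries (recentering invariance and time reversal, to handle $\mcl L$ and negative times) close the argument. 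Without Lemmas~\ref{prop-primal-dual-order} and~\ref{prop-pd-trees-same}, the concentration estimate has nothing to attach to, so the proposal as written does not yield the pathwise comparison.
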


The proof of the proposition will be based on several basic lemmas. Recall that each edge around a face in the primal map is on either the left (equivalently, southwest) side of the face or on the right (equivalently, northeast) side of the face. Therefore, given a face of the primal map, the lowest left edge and the highest right edge around the face are well-defined.  Similarly, the edges around a face in the dual map are on either the upper (equivalently, northwest) or lower (equivalently, southeast) side of the face, and the leftmost lower edge and the rightmost upper edge around the face are both well-defined.

We state the following lemma for general infinite volume bipolar oriented maps, i.e., we do not restrict ourselves to triangulations as in the remainder of the paper. Note that the lemma also holds for finite maps.
\begin{lem}
	Consider an infinite volume bipolar-oriented map $(G,\mcl O,\BB e)$ and its dual $(\wt G,\wt{\mcl O},\wt{\BB e})$, and let $\dot{\lambda}'$ and $\wt{\lambda}'$ (resp.\ $\dot{\mcl Z}$ and $\wt{\mcl Z}$) denote the corresponding west-going Peano curves (resp.\ height function of south-going and north-going trees) in the primal and dual map. Then the following holds: 
	\begin{itemize}
		\item[(i)] The path $\dot\lambda'$ (resp.\ $\wt\lambda'$) always traverses an edge following its orientation, i.e. from southeast to northwest or, equivalently, in upwards direction (resp.\ from northeast to southwest or, equivalently, from right to left). 
		\item[(ii)] The path $\dot\lambda'$ always crosses a face from top to bottom, and the last (resp.\ first) edge visited before (resp.\ after) the face is crossed is the lowest (resp.\ highest) edge on the right (resp.\ left) side of the face. The path $\wt\lambda'$ always crosses a face from left to right, and the last (resp.\ first) edge visited before (resp.\ after) the face is crossed is the leftmost (resp.\ rightmost) edge on the lower (resp.\ upper) side of the face.
		\item[(iii)] The Peano curves $\dot\lambda'$ and $\wt\lambda'$ always have the north-going (resp.\ south-going) tree on their right (resp.\ left) side.
		\item[(iv)] A step of the walk $\dot {\mcl Z}$ (resp.\ $\wt {\mcl Z}$) is equal to $(-1,1)$ iff the Peano curve does not cross a face of the primal (resp.\ dual) map in this step.
	\end{itemize}
	\label{prop:cross-direction}
\end{lem}

\begin{proof}
	The lemma is immediate from the bijection described in \cite[Sections 2.1 and 2.2]{kmsw-bipolar}.
\end{proof}

The following lemma says that the west-going Peano curves $\dot{\lambda}'$ and $\wt{\lambda}'$ visit the edges of the map in the same order. We state the result for general primal and dual bipolar-oriented maps, i.e., we do not restrict ourselves to the case when the primal map is a triangulation. Recall that we can identify  $E(G)$ and $E(\wt G)$, and therefore we may interpret $\wt\lambda'$ and $\dot\lambda'$ to have the same range.

\begin{lem}
	Consider the setting described in Lemma \ref{prop:cross-direction}. For any $n\in\BB Z$ it holds that $\dot\lambda'(n)=\wt\lambda'(n)$.
	\label{prop:primal-dual-order}
\end{lem}

\begin{proof}
	By induction and recentering of the map it is sufficient to prove that $\wt\lambda'(1)=\dot\lambda'(1)$. We consider two cases separately: (a) $Z_1\neq(-1,1)$, (b) $Z_1=(-1,1)$.
	
	First assume case (a) occurs. By Lemma \ref{prop:cross-direction}(iv) the Peano curve $\dot{\lambda}'$ crosses a face $f$ in the first step, so by Lemma \ref{prop:cross-direction}(ii) $\wt\lambda'(1)$ (resp.\ $\wt\lambda'(0)$) is the lowest (resp.\ highest) edge on the left (resp.\ right) side of $f$. By definition of the dual north-going (resp.\ south-going) tree this implies that there is a branch in this tree for which $\dot\lambda'(0)$ is the direct predecessor (resp.\ successor) of $\dot\lambda'(1)$ when following the branch towards its root. Hence, by definition of the Peano curve and Lemma \ref{prop:cross-direction}(iii), the dual Peano curve $\wt\lambda'$ will visit $\dot\lambda'(1)$ right after $\dot\lambda'(0)$, so $\wt\lambda'(1)=\dot\lambda'(1)$.
	
	Now assume case (b) occurs. By Lemma \ref{prop:cross-direction}(iv) the primal Peano curve $\dot\lambda'$ crosses a vertex $v\in V(G)$ in the first time step. The first step of our proof will be to show that the dual Peano curve $\wt\lambda'$ crosses the face in the dual graph corresponding to $v$ in this time step. Since (b) occurs there is a branch in the north-going (resp.\ south-going) primal tree for which $\dot\lambda'(0)$ is a direct predecessor (resp.\ successor) of $\dot\lambda'(1)$. By definition of the north-going (resp.\ south-going) tree this implies that the edge $e_1$ (resp.\ $e_0$) next to $\dot\lambda'(1)$ (resp.\ $\dot\lambda'(0)$) in clockwise order around $v$ has $v$ as its northernmost (resp.\ southernmost) end-point. Since $\dot\lambda'(0)$ (resp.\ $\dot\lambda'(1)$) has $v$ as its northernmost (resp.\ southernmost) end-point this implies that the vertex in $\wt G$ which is an end-point of both $\dot\lambda'(0)$ and $e_0$ (resp.\ $\dot\lambda'(1)$ and $e_1$), is the leftmost (resp.\ rightmost) vertex on the face of $\wt G$ corresponding to $v$. This implies that $\wt\lambda'$ will traverse this face right after visiting $\wt\lambda'(0)=\dot\lambda'(0)=0$. Furthermore, $\wt\lambda'(1)$ is the rightmost edge on the upper side of this face, i.e., $\wt\lambda'(1)=\dot\lambda'(1)$.
\end{proof}

The following lemma states that the north-going trees in the primal and dual map are similar, in the sense that the primal and dual branches started from a given edge trace each other closely, and that the branches started from two different edges merge approximately simultaneously for the primal and dual tree. Contrary to the lemmas above, we assume $G$ is a triangulation as in the remainder of the paper. Let $\lambda^{N}_n:\BB N_0\to E(\wt G)$ be the north-going flow line in the dual map started from $\wt\lambda'(n)$, i.e., it is defined exactly as the flow line $\lambda^N$ in Section \ref{sec:excursions}, but for the map with marked edge $\wt\lambda'(n)$ instead of $\wt{\BB e} =\wt\lambda'(0)$. Let $\dot\lambda^{N}_n:\BB N_0\to E(G)$ be the north-going flow line in the primal map started from $\dot\lambda'(n)=\wt\lambda'(n)$. 
In other words, $\dot\lambda^{N}_n$ is such that $\dot\lambda^{N}_n(0)=\dot\lambda'(n)$, 
for any $n\in\BB N$ the edges $\dot\lambda^{N}_n(k-1)$ and $\dot\lambda^{N}_n(k)$ share an end-point $v_k\in V(G)$, and for any $n,k\in\BB N$ the edge $\dot\lambda^{N}_n(k)$ has $v_k$ as its lowest end-point and it is the edge pointing in rightmost (i.e., northeast) direction with this property. 
\begin{lem}
	Let $n_1,n_2\in\BB Z$ satisfy $n_1<n_2$, and consider the primal north-going branches $\dot\lambda^N_{n_1}$ and $\dot\lambda^N_{n_2}$, and the dual north-going branches $\lambda^N_{n_1}$ and $\lambda^N_{n_2}$. 
	\begin{itemize}
		\item[(i)] The branches $\lambda^N_{n_1}$ and $\dot\lambda^N_{n_1}$ are parallel in the following sense. For any $k\in\BB N$ consider the face $f$ on the right side of $\dot\lambda^N_{n_1}(k)$ when following the branch $\dot\lambda^N_{n_1}$ towards the root of the primal north-going tree. The dual branch $\lambda^N_{n_1}$ contains the edge $e'$ of $f$ which is adjacent to $\dot\lambda^N_{n_1}(k)$ in clockwise order around $f$.
		\item[(ii)] The primal branches $\lambda^N_{n_1}$ and $\lambda^N_{n_2}$ merge simultaneously as the dual branches $\dot\lambda^N_{n_1}$ and $\dot\lambda^N_{n_2}$ in the following sense. Let $e\in E(G)$ be the last edge on $\dot\lambda^N_{n_1}$ which is not part of $\dot\lambda^N_{n_2}$ when following the branch $\dot\lambda^N_{n_1}$ towards the root of the primal north-going tree. Consider the face $f$ on the right side of $e$ when following the branch $\dot\lambda^N_{n_1}$ towards the root of the tree. Let $e'\in E(G)$ be the edge of $f$ which is adjacent to $e$ in clockwise order around $f$. Then $e'$ in the first edge which is on the path of both $\lambda^N_{n_1}$ and $\lambda^N_{n_2}$ when following the branches towards the root of the dual north-going tree.
	\end{itemize}
	\label{prop:pd-trees-same}
\end{lem}

\begin{proof}
	Part (i) is immediate by induction, geometric considerations, and the definition of dual and primal north-going flow lines. Part (ii) is immediate by part (i), geometric considerations, and the definition of dual and primal north-going flow lines. See Figure~ \ref{fig-primaldualbranches}.
\end{proof}

\begin{figure}[ht!]
	\begin{center}
		\includegraphics[scale=1.6]{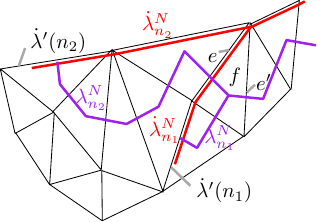}
	\end{center}\label{fig-primaldualbranches}
	\caption{An illustration of the statement and proof of Lemma \ref{prop:pd-trees-same}. The primal branches $\dot{\lambda}^N_{n_1}$ and $\dot{\lambda}^N_{n_2}$ are shown in red, and the corresponding dual branches ${\lambda}^N_{n_1}$ and ${\lambda}^N_{n_2}$ are shown in purple. To prove (i) we first note that for $k=1$ the dual branch $\lambda^N$ enters the face $f$ described in the statement of the lemma, which follows by definition of $\dot{\lambda}^N_{n_1}$ and $\lambda^N_{n_1}$; the dual branch ${\lambda}^N_{n_1}$ will visit all triangles in counterclockwise order which are between the two edges $\dot{\lambda}^N_{n_1}(0)$ and $\dot{\lambda}^N_{n_1}(1)$. We then use induction on $k$ to show that, conditioned on $\lambda^N_{n_1}$ entering the face $f$ in the statement of the lemma, $\lambda^N_{n_1}$ leaves the face through the edge $e'$; furthermore, by the definition of $\dot{\lambda}^N_{n_1}$ and $\lambda^N_{n_1}$ the dual branch $\lambda^N_{n_1}$ will enter the face which is on the right side of $\lambda^N_{n_1}(k+1)$ when following the branch $\lambda^N_{n_1}$ towards the root of the tree. To prove (ii) we observe that by (i), the definition of $e$ and the definition of $\lambda^N_{n_2}$, the branch $\lambda_{n_2}^N$ goes through the edge $e$, so the branch $\lambda_{n_2}^N$ enters the face $f$. By (i) the branch $\lambda_{n_1}^N$ also visits the face $f$, and by definition of the north-going dual branches both $\lambda_{n_1}^N$ and $\lambda_{n_2}^N$ will leave $f$ through the edge $e'$.
	}
\end{figure}

\begin{proof}[Proof of Proposition \ref{prop:twoNtrees}]
	The renormalized random walk $\dot Z^m$ converges in law to $Z$ by Donsker's theorem and symmetry between the west-going and north-going Peano curve in the primal map. By Lemma \ref{prop:primal-dual-order} and re-rooting invariance in law of $(G , \cO , \BB e)$, the law of $(\dot Z,\wt Z)$ is invariant under recentering. Furthermore, since the time reversal of $(\dot L,\wt L)$ describes the north-going flow line in the map encoded by the time reversal of $(R,L)$, which is equal in distribution to $(L,R)$, it follows that the time reversal of $(\dot L,\wt L)$ is equal in law to $(\dot R,\wt R)$. Hence it is sufficient to prove that for any $T,\ep>0$,
	\eqbn
	\lim_{m\rta\infty}\BB P\left[\sup_{0\leq n\leq Tm}|\dot{\mcl R_{n}}-\frac 13 \wt{\mcl R}_n|>\ep m^{1/2}\right]=0.
	\eqen
	For any fixed $n\in\BB N$ let $k_1$ (resp.\ $k_2$) be the length of $\dot\lambda_n^N$ (resp.\ $\dot\lambda_0^N$) until the two branches $\dot\lambda_n^N$ and $\dot\lambda_0^N$ merge, i.e.\ the number of edges along $\dot\lambda_n^N$ (resp.\ $\dot\lambda_0^N$) which are not contained in $\dot\lambda_0^N$ (resp.\ $\dot\lambda_n^N$). Then $k_1,k_2< M:=1+\sup_{0\leq \wt n\leq n} \dot{\mcl R}_{\wt n} - \inf_{0\leq \wt n\leq n} \dot{\mcl R}_{\wt n}$, and since $\dot{\mcl R}$ gives the height in the north-going tree, $\dot{\mcl R}_n=k_1-k_2$. For any $k,n\in\BB N$ let $N_k^{n,W}$ (resp.\ $\frk l^n$) be defined exactly as the stopping time $N_k^W$ (resp.\ the increasing process $\frk l$) in Lemma \ref{prop:discrete-excursions}, but for the rerooted map $(G,\mcl O,\dot\lambda'(n))$. 
	By Lemma \ref{prop:pd-trees-same}(ii), $\wt{\mcl R}_n=\frk l^n_{N_{k_2+1}^{n,W}}-\frk l_{N_{k_2+1}^{W}}$, since the lemma implies that the length of the north-going flow line in the dual map started from $\lambda'(0)$ (resp.\ $\lambda'(n)$) until the flow line reaches $\lambda'(N_{k_2+1}^{n,W})$ is given by $l_{N_{k_2+1}^{n,W}}$ (resp.\ $l^n_{N_{k_2+1}^{n,W}}$). It follows that
	\eqbn
	\begin{split}
		|\dot{\mcl R}_{n}-\frac 13 \wt{\mcl R}_n|
		& =\left|\left((k_1+1)-\frac 13 \frk l^n_{N_{k_1+1}^{n,W}}\right)-\left((k_2+1)-\frac 13 \frk l_{N_{k_2+1}^{W}}\right)\right|\\
		&\leq \sup_{0\leq k\leq M}
		\left(\left| k-\frac 13\frk l^{n}_{N_{k}^{n,W}} \right|+
		\left| k-\frac 13\frk l_{N_{k}^{W}} \right| \right).
	\end{split}
	\eqen
	We obtain our wanted result by applying a union bound, the fact that $(k-\frk l^{n}_{N_{k}^{n,W}})\eqD(k-\frk l_{N_{k}^{W}})$ for any $n,k\in\BB N$, and Lemma~\ref{prop:primal-dual-length-same} (with $M =m^{1/2 + 1/100}$):
	\eqbn
	\begin{split}
		\BB P&\left[\sup_{0\leq n\leq Tm}|
		\dot{\mcl R}_{n}-\frac 13 \wt{\mcl R}_n|>\ep m^{1/2}\right]\\
		&\,\,\,\,\,\,\leq 
		\BB P\left[\sup_{0\leq n\leq Tm} \dot{\mcl R}_n - \inf_{0\leq n\leq Tm} \dot{\mcl R}_n\geq m^{1/2+1/100}\right] \\
		&\,\,\,\,\,\,\,\,\,\,\,\,+ Tm \sup_{0\leq n\leq Tm} \BB P\left[\sup_{0\leq k\leq m^{1/2+1/100}}| k-\frac 13 \frk l_{N_k^{n,W}}|>\frac 12 \ep m^{1/2}\right]\\
		&\,\,\,\,\,\,= o_m(m^{-p}) ,\quad \forall p > 0. \qedhere
	\end{split}
	\eqen
\end{proof}

\section{Continuum decomposition of contour functions}
\label{sec:continuum-decomp}

Let $\kappa'  > 4$. Throughout this section we define
\eqb  \label{eqn:ig-parameter} 
\kappa := \frac{16}{\kappa'} , \quad 
\lambda' := \frac{\pi}{\sqrt{\kappa'}} ,\quad 
\chi := \frac{2}{\sqrt\kappa} - \frac{\sqrt\kappa}{2}   ,
\eqe 
as in~\cite{ig1,ig2,ig3,ig4}. 
Let $\wh h$ be a whole-plane GFF, viewed modulo a global additive multiple of $2\pi\chi$, as in~\cite{ig4}. Let $\eta'$ be the north-going whole-plane space-filling $\op{SLE}_{\kappa'}$ curve from $\infty$ to $\infty$ which is constructed from the $\pi/2$-angle flow lines of $\wh h$ as in Section~\ref{sec:sle-background}. Recall that a.s.\ for each $w \in \BB C$ the left and right outer boundaries of $\eta'$ stopped at the first time it hits $w$ are equal to the images of the west- and east-going flow lines $\eta_w^E$ and $\eta_w^W$ of $\wh h$ started from $w$, respectively (i.e.\ with angles $-\pi/2$ and $\pi/2$).

Let $(\BB C , h ,  0 , \infty)$ be a $\gamma$-quantum cone independent from $\wh h$ (equivalently, independent from $\eta'$). We take $\eta'$ to be parameterized by $\gamma$-quantum mass with respect to $h$ and to be normalized so that $\eta'(0) = 0$. For $t\in \BB R$, let $L_t$ (resp.\ $R_t$) be the $\gamma$-quantum length of the left (resp.\ right) outer boundary of $\eta'((-\infty, t] )$ with respect to $h$. Let $Z_t := (L_t , R_t)$, so that by~\cite[Theorem 1.9]{wedges}, $Z$ is a Brownian motion with covariance matrix
\eqb \label{eqn:bm-cov-matrix}
\Sigma = \alpha \left(   \begin{array}{cc}
	1 & -\cos(4\pi/\kappa') \\ 
	-\cos(4\pi/\kappa') & 1
\end{array}    \right) ,
\eqe 
where $\alpha > 0$ is the constant appearing in~\eqref{eqn:correl}. 

Fix $\theta \in (-\pi/2 , \pi/2)$ and let 
$\eta^\theta$ be the flow line of $\wh h$ started from 0 with angle $\theta$.
In subsequent sections we will only be interested in the case when $\kappa' = 12$ and $\theta = 0$, but the general case requires only slightly more work so we treat it here.
For $t\geq 0$, we say that $\eta'$ \emph{crosses} $\eta^\theta$ at time $t$ if for each $\ep > 0$, there exists $s_1 , s_2 \in (t-\ep , t+\ep)$ such that $\eta'(s_1)$ and $\eta'(s_2)$ lie on opposite sides of $\eta^\theta$. We define 
\eqbn
\mcl A := \left\{ t \geq 0\,:\, \text{$\eta'$ crosses $\eta^\theta$ at time $t$} \right\}   .
\eqen  

\begin{remark}
	If $\kappa'$ and $\theta$ are such that $\eta'$ a.s.\ crosses $\eta^\theta$ whenever it hits $\eta^\theta$ (which holds in particular when $\kappa'\geq 12$ and $\theta = 0$~\cite[Theorem 1.7]{ig4}) then $\mcl A = (\eta')^{-1}(\eta^\theta)$. In general, however, it is possible for $\eta'$ to hit $\eta^\theta$ without crossing it. Since the left and right boundaries of $\eta'$ stopped when it hits any $z\in\BB C$ are given by the flow lines of $\wt h$ angle $\pm\pi/2$, this will be the case if and only if $\kappa'$ and $\theta$ are such that the $\theta$-angle flow lines of $\wt h$ can hit either the $\pi/2$ or $-\pi/2$-angle flow lines of $\wt h$. As explained in~\cite[Section 3.6]{ig4}, this is the case if and only if $|\theta - \pi/2|  \wedge |\theta + \pi/2| <  \pi\kappa/(4-\kappa)$, where $\kappa$ is as in~\eqref{eqn:ig-parameter}. 
\end{remark}

The goal of this section is to describe the joint law of the pair $(Z , \mcl A)$. In the case when $\kappa' = 12$ and $\gamma =\sqrt{4/3}$, this description will be a continuum analogue of Proposition~\ref{prop:discrete-excursions}.
 
We know from~\cite[Theorem 1.11]{wedges} that $Z$ a.s.\ determines $h$ and $\eta'$ (modulo rotation) and from~\cite[Theorems 1.2 and 1.16]{ig4} that $\eta'$ a.s.\ determines $\eta^\theta$. Hence the set $\mcl A$ is a.s.\ equal to some deterministic functional of $Z$. We are not able to describe this functional explicitly. Instead, we will give an indirect description of the joint law of $(Z , \mcl A)$ by constructing this pair simultaneously from a certain Poisson point process of conditioned Brownian paths. See Figure~\ref{fig:flowline-decomp} for an illustration.

To describe the excursions of $Z$ away from the time set $\mcl A$, we need the following definition. 

\begin{defn} \label{def:bm-excursion}
Let $A : \BB R^2 \rta \BB R^2$ be a linear transformation which fixes the horizontal axis and maps the two coordinates $L$ and $R$ of the Brownian motion $Z$ to a pair of independent Brownian motions with the same variances as $L$ and $R$. Suppose given $T>0$. Let $\wh L$ be a Brownian bridge from 0 to $A x$ in time $T$ and let $\wh R$ be a one-dimensional Brownian excursion of time length $T$ (i.e.\ a Brownian bridge from 0 to 0 in time $T$ conditioned to stay positive). Let $\wh Z = (\wh L , \wh R)$. A \emph{Brownian excursion in the upper half-plane in time $T$ with covariance matrix $\Sigma$} is the random path $\dot Z = A^{-1} \wh Z$. 
A \emph{Brownian excursion in the right half-plane in time $T$ with covariance matrix $\Sigma$} is obtained by pre-composing $\wh Z$ with a rotation by angle $\pi/2$. 
\end{defn}

\begin{prop} \label{prop:ppp}
	For $s \geq 0$, let $\tau_s$ (resp.\ $\wt\tau_s$) be the smallest $t\geq s$ (resp.\ the largest $t \leq s$) for which $t \in \mcl A$. Also let $E_s$ be the event that $\eta'(s)$ lies to the left of $\eta^\theta$. For $s \geq 0$, let
	\begin{align} \label{eqn:signed-bm}
	X_s  := (R_s - R_{\wt\tau_s}) \BB 1_{E_s} - (L_s - L_{\wt\tau_s}) \BB 1_{E_s^c} ,\qquad s \geq 0 . 
	\end{align}
	There is a deterministic constant $p  \in (0,1)$, depending only on $\theta$ and $\kappa'$ and equal to $1/2$ for $\theta = 0$ such that the following is true. 
	The function $|X|$ has the law of $\alpha^{1/2}$ times a standard reflected Brownian motion (where $\alpha$ is as in~\eqref{eqn:bm-cov-matrix}) and 
	\eqbn
	\mcl A = X^{-1}(0) = \left\{\tau_s , \wt\tau_s \,:\, s \geq 0\right\}.
	\eqen
	Furthermore, if we condition on $\mcl A$, then the conditional law of the excursions $\{ (Z-Z_{\wt\tau_s})|_{[\wt\tau_s , \tau_s]} \,:\, s \geq 0\}$ is that of a collection of independent random paths, each of which is independently equal with probability $p$ to a Brownian excursion in the upper half-plane in time $\tau_s - \wt\tau_s$ with covariance matrix $\Sigma$; and is equal with probability $1-p$ to a Brownian excursion in the right half-plane in time $\tau_s - \wt\tau_s$ with covariance matrix $\Sigma$. 
\end{prop}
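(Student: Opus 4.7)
The plan is to use the quantum wedge decomposition of the $\gamma$-quantum cone along the flow line $\eta^\theta$, together with the peanosphere encoding of space-filling $\SLE_{\kappa'}$ in a quantum wedge. Conditionally on $\eta^\theta$, by the conformal welding / quantum zipper theory from the mating-of-trees paper~\cite{wedges} (applied to $\eta^\theta$, viewed as an $\SLE_\kappa(\underline\rho)$-type flow line in the cone whose $\rho$-weights are determined by $\theta$ via imaginary geometry), the two complementary regions of $\eta^\theta$ are, as quantum surfaces, independent beaded quantum wedges $\mcl W^L$ (left of $\eta^\theta$) and $\mcl W^R$ (right of $\eta^\theta$) of weights $w^L = w^L(\theta,\kappa')$ and $w^R = w^R(\theta,\kappa')$. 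When $\theta = 0$, reflection symmetry across $\eta^\theta$ yields $w^L = w^R$.

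Next, apply the peanosphere encoding of a space-filling SLE inside a quantum wedge. The restriction of $\eta'$ to $\mcl W^L$ (resp.\ $\mcl W^R$) is the concatenation of the excursions of $\eta'$ during the intervals $[\wt\tau_s,\tau_s]$ on which $E_s$ (resp.\ $E_s^c$) holds, and each such excursion exactly fills one bead of the corresponding wedge. By the quantum-wedge version of the peanosphere bijection, the sequence of beads in $\mcl W^L$, indexed by local time along $\eta^\theta$, is encoded by a Brownian motion with covariance $\Sigma$ reflected off the horizontal axis $\{R = 0\}$; restricted to a single bead of duration $\tau_s - \wt\tau_s$, this becomes a BM with covariance $\Sigma$ conditioned to stay in the upper half-plane $\{R \geq 0\}$ for that much time and to exit at the end. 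The analogous statement holds for $\mcl W^R$ with the right half-plane $\{L \geq 0\}$. Conditionally on the sequence of bead durations the beads are independent, and the two wedges are independent of each other given $\eta^\theta$.

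To match this with the statement, identify the process $X$ of~\eqref{eqn:signed-bm} as the signed boundary-touching coordinate of the wedge-BMs: during a left excursion $X_s = R_s - R_{\wt\tau_s}$ is the $R$-component of the left wedge's BM within its current bead (hence $\geq 0$), and during a right excursion $-X_s = L_s - L_{\wt\tau_s}$ plays the analogous role for $\mcl W^R$ (hence $X \leq 0$). The joint law of $(Z,\mcl A)$ then has the structure claimed in the proposition, with each excursion of $|X|$ independently labelled left (probability $p$) or right (probability $1-p$). The constant $p$ is the asymptotic fraction of left-type excursions, which equals $w^L/(w^L + w^R)$ by the welding, giving $p = 1/2$ for $\theta = 0$ by symmetry. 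Finally, $|X|$ has the law of $\alpha^{1/2}$ times a standard reflected Brownian motion because, up to a constant, $|X|_s$ equals the quantum length of $\eta^\theta$ that has been ``zipped up'' by $\eta'$ by time $s$, and in the wedge framework this quantum length is a reflected BM of variance $\alpha$ (equal to the variance of each coordinate of $Z$).

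The hard part will be making the wedge decomposition quantitatively precise: identifying the correct weights $w^L, w^R$ for general $\theta \in (-\pi/2,\pi/2)$ and $\kappa' > 4$, and, for $\kappa' \in (4,8)$, checking that the bubble-entering nature of space-filling SLE does not interfere with the bead-by-bead BM description (so that each maximal excursion of $\eta'$ away from $\eta^\theta$ corresponds to a single bead of a wedge rather than a chain of sub-beads). Both issues should be handled by combining the flow-line characterization of space-filling SLE from~\cite{ig4} with the explicit conditional law of a quantum wedge given by a flow line cut from~\cite{wedges}.
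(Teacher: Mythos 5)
Your overall picture---cut the quantum cone along $\eta^\theta$ and read off the excursions of $\eta'$ into the two sides---is the right heuristic, but several load-bearing steps are either misstated or assume what is to be proved. First, the cutting theorem of the mating-of-trees paper applied to a $\gamma$-quantum cone and an independent whole-plane flow line produces a \emph{single} quantum wedge whose two boundary rays are conformally welded along $\eta^\theta$; it does not produce two independent wedges $\mcl W^L,\mcl W^R$, and the beads of that wedge (when it is thin) are the complementary components of the self-touching curve $\eta^\theta$, not the excursion regions $\eta'([\wt\tau_s,\tau_s])$ that the proposition concerns. Second, the step ``by the quantum-wedge version of the peanosphere bijection, the sequence of beads indexed by local time along $\eta^\theta$ is encoded by a reflected Brownian motion'' is essentially the conclusion of the proposition; there is no quotable result of this form, and in particular the fact that $\mcl A$ is the range of a $1/2$-stable subordinator (so that local-time indexing even makes sense) must be established. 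The paper does this by showing $\mcl A$ is regenerative (Lemmas~\ref{prop:stopping} and~\ref{prop:hit-future}), using scale invariance of the cone to get stability, and identifying the index from the tail estimate $\BB P[\tau_s - s > r] \asymp s^{1/2} r^{-1/2}$, which rests on the key observation that $\tau_s$ is the first time after $s$ at which one coordinate of $Z$ drops by the boundary length $|X_s|$.

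Two specific claims are wrong or unjustified. The quantity $|X_s|$ is \emph{not} a constant times the quantum length of the portion of $\eta^\theta$ zipped up by time $s$: that quantity is the nondecreasing local time $\ell_s$ of Proposition~\ref{prop:local-time}, whereas $|X_s|$ is the quantum length of the arc of the outer boundary of $\eta'((-\infty,s])$ between $\eta'(s)$ and the tip of the covered part of $\eta^\theta$; so your justification that $|X|$ is a reflected Brownian motion does not go through. And the formula $p = w^L/(w^L+w^R)$ is asserted without argument; note that the authors explicitly state that $p$ is unknown for $\theta\neq 0$, while the weights cut out by an angle-$\theta$ flow line are explicitly computable, so if your formula held it would resolve a question the paper leaves open---it needs a genuine proof or is false. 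The paper instead only proves \emph{existence} of $p$ via the iid structure and scale invariance of the excursions exceeding a fixed duration, and pins down $p=1/2$ at $\theta=0$ by symmetry; likewise, the conditional law of each excursion given $\mcl A$ is derived directly from the Markov property of $Z$ at the coordinate hitting time $\tau_s$, not from a wedge-weight computation.
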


The process $X$ of~\eqref{eqn:signed-bm} is a continuum analogue of the process $\mcl X$ defined in Section~\ref{sec:excursions}.
Proposition~\ref{prop:ppp} implies that in the case when $\theta = 0$ (so $p = 1/2$), the process $X$ is obtained from $|X|$ by independently multiplying by $\pm 1$ with equal probability on each of its excursions away from 0. Therefore $ X$ has the law of $\alpha^{1/2}$ times a standard linear Brownian motion in this case. In general, $X$ has the law of a skew Brownian motion with parameter $p$~\cite{lejay-skew-bm}. 

We do not know the value of the constant $p$ except in the case when $\theta=0$. \textit{Update:} The value of $p$ for general $\gamma\in (0,2)$ and $\theta \in (-\pi/2,\pi/2)$ is computed in~\cite{asy-p-theta}.

\begin{figure}[ht!]
	\begin{center}
		\includegraphics[scale=1.4]{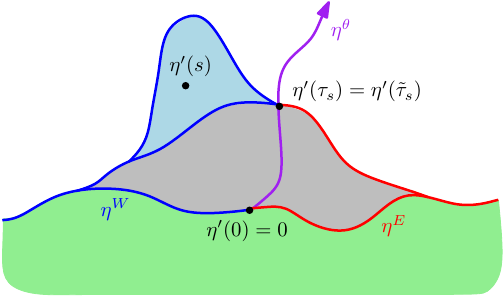}
	\end{center}
	\caption{
		An illustration of the objects involved in the statement of Proposition~\ref{prop:ppp}. The curve $\eta'$ traces the green region, then the gray region, then the blue region. For $s > 0$, $\wt\tau_s$ is the last time that $\eta'$ crosses the flow line $\eta^\theta$ before time $s$, and $\tau_s$ is the next time that $\eta'$ crosses $\eta^\theta$ after time $s$, equivalently (by Lemma~\ref{prop:chronological}) the next time after $s$ at which $\eta'$ hits $\eta'(\wt\tau_s)$. Proposition~\ref{prop:ppp} tells us that the set $\mcl A$ of times $\tau_s$ and $\wt\tau_s$ for $s\geq 0$ has the same law as the zero set of a standard linear Brownian motion and that the excursions of $Z$ away from this set are certain conditioned correlated Brownian motions.
	} \label{fig:flowline-decomp}
\end{figure}

In light of Proposition~\ref{prop:ppp}, one can define the \emph{local time of $Z$ at $\mcl A$}, which we call $\{\ell_t\}_{t\geq 0}$, to be $1/2$ times the local time of the reflected Brownian motion $|X|$ as in~\eqref{eqn:signed-bm} at zero (the reason for the factor of $1/2$ is so that $\ell_t$ is the local time of $X$ at 0 when $\theta=0$, in which case $X$ is a Brownian motion). This local time has a natural interpretation in terms of $h$ and $\eta^\theta$.  

\begin{prop} \label{prop:local-time}
	Let $\{\ell_t\}_{t\geq0 }$ be $1/2$ times the local time of $|X|$ at 0, as above There is a deterministic constant $c>0$ (possibly depending on $\theta$ and $\kappa'$) such that a.s.\ for each $t\geq0$, $\ell_t = c \nu_h(\eta^\theta \cap \eta'([0,t]) )$. 
\end{prop}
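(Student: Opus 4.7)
The plan is to show that both $t \mapsto \ell_t$ and $F_t := \nu_h(\eta^\theta \cap \eta'([0,t]))$ are continuous, non-decreasing additive functionals whose supports (as Stieltjes measures) are both contained in $\mcl A$, and then to apply the uniqueness of additive functionals supported on the zero set of a (skew) Brownian motion. For $\ell_t$ the support statement is immediate from the definition via the reflected Brownian motion $|X|$, whose zero set equals $\mcl A$ by Proposition~\ref{prop:ppp}. For $F_t$ one observes that $\nu_h$ is supported on $\eta^\theta$, so $F_t$ can increase at a time $t$ only if $\eta'(t) \in \eta^\theta$; up to a Lebesgue-null set of times at which $\eta'$ touches $\eta^\theta$ without crossing (a set on which the measure $dF_t$ assigns zero mass, since $\nu_h$ restricted to $\eta^\theta$ is non-atomic and $\eta'$ visits any single point of $\eta^\theta$ on a null set of times), this coincides with $\mcl A$. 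Continuity of $F_t$ likewise reduces to the non-atomicity of the quantum length along the $\SLE_\kappa$-like curve $\eta^\theta$.

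Next I would use the joint Markov/regenerative structure established in Proposition~\ref{prop:ppp}: conditionally on $\mcl A$, the excursions of $Z$ away from $Z_{\wt\tau_s}$ are independent, and $X$ is a skew Brownian motion for which $\mcl A$ is the zero set. Standard excursion theory (Itô's excursion theorem and the uniqueness of local time) implies that any continuous additive functional of a skew Brownian motion which is supported on its zero set must be proportional to local time, with the proportionality constant determined by the total mass assigned to a typical excursion interval. Applying this to $F_t$ yields $F_t = C \ell_t$ for all $t \geq 0$, where a priori the constant $C$ could be a random variable measurable with respect to the full GFF/SLE data.

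Finally, to upgrade $C$ to a deterministic constant $c$, I would exploit the scale invariance of the $\gamma$-quantum cone. Under the rescaling $(z, h) \mapsto (z/\lambda, h(\lambda\,\cdot) + Q \log \lambda)$ the quantum cone is stationary, $\eta'$ reparameterizes by a time-scaling $t \mapsto \lambda^{-2/\gamma \cdot \gamma} t$ (so by $\lambda^2$-worth of quantum area), $\eta^\theta$ scales covariantly, and $\nu_h$ on $\eta^\theta$ rescales by $\lambda^{\gamma Q/... }$-type factors that match the scaling of $\ell_t$ through the $(3\alpha/2)^{1/2}$-type Brownian scaling of $Z$. Hence $F_{\lambda t}/\ell_{\lambda t} \eqD F_t/\ell_t$ for every $\lambda > 0$, so $C$ is scaling-invariant. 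Combined with the ergodicity of the scaling action on the quantum cone (equivalently, the triviality of the tail $\sigma$-algebra generated by $(h, \eta', \eta^\theta)$ as $t \to \infty$, which follows from the independence of increments of $Z$ between successive visits to $\mcl A$), this forces $C$ to equal a deterministic constant $c$. The main obstacle I anticipate is the careful verification of continuity and additivity for $F_t$ and especially the check that isolated (non-crossing) touches of $\eta'$ with $\eta^\theta$ contribute no mass; after that, the uniqueness of local time and the scaling/ergodicity argument are essentially automatic.
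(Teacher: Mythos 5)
Your overall architecture (reduce to ``a continuous increasing process carried by $\mcl A$,'' then pin down a deterministic constant by scaling and tail-triviality) is close in spirit to the paper, and your final step matches the paper's: the paper also uses the scale invariance of the quantum cone ($h^C = h + \gamma^{-1}\log C$) together with measurability of the limit $\lim_{u\to\infty}F(u)/u$ with respect to $(Z-Z_{T_u})|_{[T_u,\infty)}$ for every $u$ to force the constant to be deterministic and the identity to be exact.

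The gap is in your middle step. The principle ``any continuous additive functional supported on the zero set of a (skew) Brownian motion is proportional to local time'' does not apply here, for two reasons. First, $F_t=\nu_h(\eta^\theta\cap\eta'([0,t]))$ is not an additive functional of $X$ (nor of $|X|$): it is a functional of the full decorated surface, adapted only to the larger filtration generated by $Z$. Second, and more importantly, for a process adapted to a larger filtration, being continuous, nondecreasing and carried by $\mcl A$ does \emph{not} imply proportionality to local time even with a random constant --- $t\mapsto\int_0^t g_s\,d\ell_s$ for any positive adapted $g$ is a counterexample. (Your parenthetical that the constant is ``determined by the total mass assigned to a typical excursion interval'' also cannot be right: a functional carried by the zero set assigns zero mass to every excursion interval.) What actually closes this gap is a renewal property for the \emph{triple} $(\mcl S_{T,\infty},\wh\eta'_{T,\infty},\wh\eta^\theta_{T,\infty})$ at the inverse local times $T_u$ --- this is Lemma~\ref{prop:hit-future}, not Proposition~\ref{prop:ppp}, which only concerns $(Z,\mcl A)$ and says nothing about the quantum length measure on $\eta^\theta$. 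Granting that lemma, $u\mapsto F(u):=\nu_h(\eta^\theta_{T_u})$ has stationary increments; the paper then applies the Birkhoff ergodic theorem to get $F(u)/u\to X$ a.s., shows $X$ is tail-measurable hence constant, and uses the scaling relation $C^{1/2}F(C^{-1/2}u)\eqD F(u)$ to upgrade the asymptotic statement to $F(u)=cu$ for each $u$. If you repair your argument by replacing ``uniqueness of CAFs of skew Brownian motion'' with this regeneration input, your route essentially collapses onto the paper's.
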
 

We will eventually show via a rather indirect argument that the constant $c$ of Proposition~\ref{prop:local-time} is equal to $2$ when $\kappa' =12$ and $\theta = 0$ (see Corollary~\ref{cor:c-value} below). It remains an open problem to compute $c$ for other $(\kappa',\theta)$ pairs. \textit{Update:} The value of $c$ for general $(\kappa',\theta)$ pairs is computed in~\cite{asy-p-theta}.

It is not immediately obvious that Proposition~\ref{prop:ppp} uniquely characterizes the joint law of the pair $(Z , \mcl A)$. What is obvious is that Proposition~\ref{prop:ppp} gives a complete description of the law of the process
\eqb \label{eqn:excursion-process}
W_s = (W_s^L , W_s^R) := Z_s - Z_{\wt\tau_s} , \quad \forall s \geq 0 ,
\eqe 
which encodes the excursions of $Z$ away from $\mcl A$. 
Note that $W$ is a.s.\ continuous except at the times $\tau_s$ for $s\geq 0$ (where it has jump discontinuities in one coordinate). It turns out that $W$ actually determines $Z|_{[0,\infty)}$.

\begin{prop} \label{prop:bm-determined}
	In the setting of Proposition~\ref{prop:ppp}, it holds for each $t \in [0,\infty]$ that the Brownian motion $Z|_{[0,t]}$ (equivalently the quantum surface obtained by restricting $h$ to $\eta'([0,t])$) is a.s.\ determined by the process $W|_{[0,t]}$. 
\end{prop}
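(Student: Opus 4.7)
The plan is to recover both the crossing set $\mcl A$ and the path $Z|_{[0,t]}$ from $W|_{[0,t]}$ in a measurable way, using the excursion decomposition of Proposition~\ref{prop:ppp}.

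First I would show that $\mcl A = \{s \geq 0 : W_s = 0\}$ almost surely. The inclusion $\mcl A \subseteq \{s : W_s = 0\}$ is immediate from $W_s = Z_s - Z_{\wt\tau_s}$ and $\wt\tau_s = s$ for $s \in \mcl A$. For the reverse inclusion, if $s$ lies in the open interior $(\wt\tau_s, \tau_s)$ of some excursion interval, then by Proposition~\ref{prop:ppp} the excursion $W|_{[\wt\tau_s, \tau_s]}$ is, conditionally on $\mcl A$ and on its east/west type, a $\Sigma$-Brownian motion conditioned to stay in either the upper or the right half-plane until it exits at the right endpoint. Such a conditioned path has at least one coordinate strictly positive throughout the open interior (it is essentially a Bessel-bridge type process), so $W_s \neq 0$; a union bound over the countably many excursions finishes the claim.

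Next I would recover $Z$ via $Z_t = W_t + Z_{\wt\tau_t}$, reducing to a measurable reconstruction of $Z_{\wt\tau_t}$. The crucial observation is that for an east excursion one has $W^R_{\tau_s^-} = 0$ (since the excursion exits the upper half-plane precisely when $W^R$ returns to $0$) while $W^L_{\tau_s^-}$ is generically nonzero, and symmetrically for a west excursion. Hence only east excursions contribute nontrivial jumps to the cumulative $L$-coordinate and only west excursions to the cumulative $R$-coordinate; moreover the east/west type of an excursion is a measurable function of the excursion path, hence of $W$. For each $t > 0$ and $\eps > 0$ set
\[
\Phi^{L,\eps}_t := \sum_{\substack{k \text{ east},\ \tau_{s_k} \leq t \\ \tau_{s_k} - \wt\tau_{s_k} > \eps}} W^L_{\tau_{s_k}^-},
\]
a finite sum and hence manifestly $\sigma(W|_{[0,t]})$-measurable, and define $\Phi^{R,\eps}_t$ symmetrically.

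I would then establish that $\Phi^{L,\eps}_t$ converges almost surely and in $L^2$ to a limit $\Phi^L_t$ equal to $L_{\wt\tau_t}$. Conditional on $\mcl A$ and on the east/west classification, the variables $\{W^L_{\tau_{s_k}^-} : k \text{ east}\}$ are centered and independent with variances $\alpha(1-\rho^2)\ell_k$, where $\ell_k := \tau_{s_k} - \wt\tau_{s_k}$ and $\rho = -\cos(4\pi/\kappa')$; this follows from the standard Gaussian decomposition $L = \rho R + \widetilde B$ with $\widetilde B$ an independent Brownian motion of variance $\alpha(1-\rho^2)$. Since the total east excursion length on $[0,t]$ is bounded by $t$, the variance sum $\sum_{k \text{ east},\ \tau_{s_k} \leq t} \ell_k$ is finite a.s., so Kolmogorov's convergence criterion gives a.s.\ and $L^2$ convergence of $\Phi^{L,\eps}_t$ as $\eps \downarrow 0$. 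To identify the limit with $L_{\wt\tau_t}$, note that west excursions make zero net contribution to $L$ while each east excursion contributes exactly $W^L_{\tau_s^-}$, and combining the $L^2$ control with the a.s.\ continuity of $L$ lets us equate the accumulated east-excursion increments to $L_{\wt\tau_t} - L_0 = L_{\wt\tau_t}$. The same argument gives $\Phi^R_t = R_{\wt\tau_t}$.

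Combining, $Z_t = W_t + (\Phi^L_t, \Phi^R_t)$ is $\sigma(W|_{[0,t]})$-measurable for each fixed $t$; applying this at a countable dense set of times and using continuity of $Z$ shows that the entire path $Z|_{[0,t]}$ is $\sigma(W|_{[0,t]})$-measurable, and the case $t = \infty$ follows by passing to a limit. The main technical step is the identification of the $L^2$-limit $\Phi^L_t$ with $L_{\wt\tau_t}$: because $\sum |W^L_{\tau_{s_k}^-}|$ diverges almost surely, one cannot telescope naively, and the argument relies essentially on the clean decoupling between east and west excursion contributions to the two coordinates, together with the finiteness of total east excursion length on bounded time intervals.
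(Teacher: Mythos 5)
Your strategy is genuinely different from the paper's: you try to reconstruct $Z_{\wt\tau_t}$ explicitly as the $L^2$-convergent sum of the endpoint displacements $W^L_{\tau_{s_k}^-}$, $W^R_{\tau_{s_k}^-}$ of the excursions, whereas the paper argues non-constructively (two conditionally independent copies $Z,Z'$ compatible with the same $W$ must coincide because $Z-Z'$ is a continuous martingale, constant off $\mcl A$, with vanishing quadratic variation). Several of your preparatory steps are correct and nicely observed — in particular the decomposition $L=\rho R+\wt B$ with $\wt B\perp R$, which shows that the conditioning defining an east excursion acts only on $R$, so that conditionally on $\mcl A$ and the types the terminal values $W^L_{\tau_{s_k}^-}$ are independent centered Gaussians with variances $\alpha(1-\rho^2)\ell_k$ summing to at most $\alpha(1-\rho^2)t$; Kolmogorov then gives convergence of $\Phi^{L,\eps}_t$.

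However, there is a genuine gap at exactly the step you flag as the main one: identifying the limit $\Phi^L_t$ with $L_{\wt\tau_t}$. The difference $L_{\wt\tau_t}-\Phi^{L,\eps}_t$ is the sum of the increments of $L$ over the $N_\eps$ closed intervals complementary to the excursions of length $>\eps$, and you must show this tends to $0$. Continuity of $L$ does not do it: for the range of a $1/2$-stable subordinator one has $N_\eps\asymp\eps^{-1/2}$ while the total complementary length is $\delta_\eps\asymp\eps^{1/2}$, so the H\"older bound only gives $\sum_j|\Delta_j L|\preceq N_\eps^{1/2+\zeta}\delta_\eps^{1/2-\zeta}\asymp\eps^{-\zeta}$, which does not vanish; and the $L^2$ control you establish concerns only the partial sums $\Phi^{L,\eps}_t$, so it shows they converge to \emph{something}, not that the residual ``increment of $L$ over $\mcl A$'' is zero. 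To push your route through you would need the gap increments $L_{\wt\tau_{j+1}}-L_{\tau_j}$ to be, conditionally on $\mcl A$ and the types, centered and independent with variance $O(\wt\tau_{j+1}-\tau_j)$; neither the centering (the times $\wt\tau_{j+1}$ are honest times, not stopping times) nor the conditional independence of these residuals is supplied by Proposition~\ref{prop:ppp}, which describes only the excursions themselves — and asserting that the residuals are functionals of the short excursions inside the gaps is circular, since that measurability is the content of the proposition being proved. This is precisely the obstacle the paper's argument is designed to avoid: the second-power (quadratic variation) sums $2^{k/2+o(k)}\cdot 2^{-(1-2\zeta)k}$ over dyadic intervals meeting $\mcl A$ do vanish, even though the first-power sums your direct reconstruction requires do not.
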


The remainder of this section is structured as follows. In Section~\ref{sec:ig-lemma}, we prove some lemmas based on the theory of imaginary geometry~\cite{ig1,ig2,ig3,ig4} which describe the interaction of $\wh h$ and the curves $\eta'$ and $\eta^\theta$. In Section~\ref{sec:lqg-lemma}, we prove (using the results of the preceding subsection and of~\cite{wedges}) that the times $\tau_s$ of Proposition~\ref{prop:ppp} are renewal times for $Z$, in the sense that each $\tau_s$ is a stopping time for $Z$ and the joint conditional law of $(Z-Z_{\tau_s})|_{[\tau_s,\infty)}$ and the part of $\eta^\theta$ not hit by $\eta'$ before time $\tau_s$ (the latter viewed as a curve on a certain quantum surface) is the same as the joint law of $Z|_{[0,\infty)}$ and $\eta^\theta$. In Section~\ref{sec:cont-proofs}, we conclude the proof of Propositions~\ref{prop:ppp} and~\ref{prop:local-time}. In Section~\ref{sec:bm-determined}, we prove Proposition~\ref{prop:bm-determined} via an argument which will also be relevant in Section~\ref{sec:flowline-conv}.

\subsection{Imaginary geometry lemmas}
\label{sec:ig-lemma}

In this subsection we will prove some lemmas about the interactions between $\wh h$, $\eta'$, and $\eta^\theta$ which are straightforward consequences of the results of~\cite{ig1,ig2,ig3,ig4}. These lemmas will eventually be used for the proofs of the main results of this section. 
In order to give precise statements of our results in the regime when $\eta'$ can hit $\eta^\theta$ without crossing it, we need the following definition.

\begin{defn} \label{def:covered}
	We say that a point $a\in \eta^\theta$ is \emph{covered} by $\eta'$ at time $t$ if there is a sequence of points $a_n \in \eta^\theta$ such that $a_n \rta a$ and each $a_n$ lies in the interior of $\eta'([0,t])$. 
\end{defn}

The set of points of $\eta^\theta$ which are covered by $\eta'$ at time $t$ is a segment of the curve $\eta^\theta$, as the following lemma demonstrates.

\begin{lem} \label{prop:chronological}
	For $a\in \eta^\theta$, let $t_a$ be the infimum of the times $t \geq 0$ at which $a$ is covered by $\eta'$. Then a.s., for each $a,b \in \eta^\theta$, we have $t_a < t_b$ if and only if $\eta^\theta$ hits $a$ before $b$. 
\end{lem}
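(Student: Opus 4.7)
The plan is to use the description of $\eta'$ as the Peano curve separating the tree of $\pi/2$-angle flow lines from the tree of $-\pi/2$-angle flow lines of $\wh h$ (Section~\ref{sec:sle-background}), combined with imaginary-geometry results on merging and monotonicity of flow lines of different angles~\cite{ig1,ig4}. By symmetry, it suffices to prove the forward direction: if $\eta^\theta$ hits $a$ strictly before $b$, then $t_a < t_b$.

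The first step is to analyze the $\pi/2$-angle flow lines $\eta^{\pi/2}_a$ and $\eta^{\pi/2}_b$ of $\wh h$ started from $a$ and $b$ respectively. By the whole-plane merging statement~\cite[Theorem 1.9]{ig4}, they a.s.\ eventually merge. Near $a$ (resp.\ near $b$), angle-monotonicity places $\eta^{\pi/2}_a$ (resp.\ $\eta^{\pi/2}_b$) to the left of $\eta^\theta$, since $\pi/2 > \theta$. A topological argument based on the Jordan arc of $\eta^\theta$ from $a$ to $b$ then forces $\eta^{\pi/2}_a$ to merge into $\eta^{\pi/2}_b$ on the \emph{left} side of the latter. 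Via the Peano-curve description of $\eta'$ recalled in Section~\ref{sec:sle-background}, this is exactly the statement that the first time $\sigma_a$ at which $\eta'$ hits $a$ is strictly less than the first time $\sigma_b$ at which $\eta'$ hits $b$.

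The second step is to pass from first-hit times to covering times. The preceding argument applies to any pair of points on $\eta^\theta$, so the map $c \mapsto \sigma_c$ is strictly increasing in the order of $\eta^\theta$. For $c \in \eta^\theta$, let $T_c$ be the first time at which $c$ lies in the interior of $\eta'([0,\cdot])$; since $\eta'$ is space-filling, $T_c < \infty$ and $T_c \geq \sigma_c$, with equality whenever $\eta'$ crosses $\eta^\theta$ at $c$. The definition of $t_a$ can be rephrased as $t_a = \liminf_{c \to a,\, c \in \eta^\theta} T_c$. Choose a sequence $c_n \to a$ along $\eta^\theta$ with each $c_n$ strictly after $a$ on $\eta^\theta$, and a sequence $d_n \to b$ with each $d_n$ strictly before $b$; by interposing a fixed point $c^\ast$ on the arc of $\eta^\theta$ between the two sequences, the first step gives $T_{c_n} \leq \sigma_{c^\ast} < T_{d_n}$ for all large $n$, and taking the liminf yields $t_a \leq \sigma_{c^\ast} < t_b$.

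The main obstacle is the topological argument in the first step: pinning down that $\eta^{\pi/2}_a$ merges into $\eta^{\pi/2}_b$ on the left (and not on the right) requires confining the merging point to the appropriate complementary component of the union of $\eta^\theta|_{[a,b]}$ with suitable initial segments of the two $\pi/2$-angle flow lines. This is intuitively clear from a picture but requires some care because the flow lines are rough curves that may wind significantly before merging. I expect this can be carried out by invoking the interaction and monotonicity statements in~\cite[Section 3]{ig4} to reduce the topology to a bounded region.
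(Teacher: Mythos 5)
Your strategy is genuinely different from the paper's, and as written it has two gaps that are not just bookkeeping. The paper does not go through first-hitting times at all: it argues by contradiction that if some $b'\in\eta^\theta$ were interior-covered by $\eta'([0,\tau_w])$ while an earlier point $a$ were not, then $\eta^\theta$ would have to enter and exit the region bounded by the left and right boundaries of $\eta'([0,\tau_w])$, forcing it to cross one of the flow lines $\eta_w^W,\eta_w^E$ twice, contradicting the single-crossing property of \cite[Theorem 1.9]{ig4}. Crucially, the paper only ever uses flow lines started from $0$ and from deterministic rational points $w\in\BB Q^2$. Your Step 1 instead invokes merging and monotonicity for the flow lines $\eta^{\pi/2}_a,\eta^{\pi/2}_b$ started from \emph{arbitrary} points $a,b$ of the random curve $\eta^\theta$, and you need the conclusion to hold a.s.\ simultaneously for all (uncountably many) such pairs. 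The imaginary-geometry statements you cite are proved for fixed or countably many starting points, so this step does not follow by a direct citation; the ``topological argument'' you flag as the main obstacle is precisely the content that the paper's double-crossing argument supplies, and restricting to rational base points is how the paper makes the a.s.\ quantifier work. So the obstacle you identify is real and is the heart of the lemma, not a technicality.

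The second gap is in Step 2: the inequality $T_{c_n}\leq\sigma_{c^\ast}$ conflates the first time $\eta'$ \emph{hits} $c_n$ with the first time $c_n$ lies in the \emph{interior} of $\eta'([0,\cdot])$. A point hit before $\sigma_{c^\ast}$ may lie on the boundary of $\eta'([0,\sigma_{c^\ast}])$ (which consists of arcs of the angle-$\pm\pi/2$ flow lines from $0$ and from $c^\ast$), in which case $T_{c_n}>\sigma_{c^\ast}$ and your displayed inequality fails for that $n$. This is repairable: since $\eta^\theta$ cannot trace an east- or west-going flow line along a nontrivial arc (a fact the paper uses in the proof of Lemma~\ref{prop:curve-future}), one can choose the approximating points $c_n$ off that boundary, and the conclusion $t_a\leq\sigma_{c^\ast}$ survives; one also needs to handle approximating sequences for $b$ coming from after $b$, and to interpose a second point to get the strict inequality $t_b>\sigma_{c^\ast}$. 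For $\kappa'\in(4,8)$, where $\eta'([0,t])$ is a chain of bubbles rather than a disk, both steps require additional care. In short: the two-step reduction (monotonicity of hit times, then a limiting argument to covering times) is a viable alternative route, but the key topological input is missing and the hit-time/covering-time passage needs the boundary issue addressed.
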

\begin{proof}
	For $w\in \BB C$, let $\tau_w$ be the first time that $\eta'$ hits $w$. Also let  $\eta_w^E$ and $\eta_w^W$ be the flow lines of $\wh h$ started from $w$ with angles $-\pi/2$ and $\pi/2$, as above. 
	
	Suppose by way of contradiction that with positive probability, there exist points $a ,b \in \eta^\theta$ such that $\eta^\theta$ hits $a$ before $b$ but $t_b < t_a$. Then we can find a deterministic $w \in \BB Q^2$ such that with positive probability, there exists such an $a$ and $b$ satisfying $t_b < \tau_w < t_a$ (so in particular $b$ is covered by $\eta'([0 , \tau_w])$). Let $E_w$ be the event that this is the case.
	
	Let $\ell^L$ (resp.\ $\ell^R$) be the left (resp.\ right) boundary of $\eta'([0 , \tau_w])$, so that $\ell^L$ (resp.\ $\ell^R$) is the concatenation of the segments of the flow lines $\eta_0^W$ and $\eta_w^W$  (resp.\ $\eta_0^E$ and $\eta_w^E$) traced before these flow lines merge. By Definition~\ref{def:covered}, on $E_w$ we can find $b'  \in \eta^\theta$ which lies in the interior of $\eta'([0,\tau_w])$ and $a \in\eta^\theta$ with $t_a < \tau_w$ such that $a$ is hit by $\eta^\theta$ before $b'$. The curves $\ell^L$ and $\ell^R$ do not cross one another, share the same endpoints, disconnect $b'$ from $\infty$, and do not disconnect $a$ from $\infty$. It follows that $\eta^\theta$ must cross one of these two curves at least twice. Since $\eta^\theta$ does not cross $\eta_0^W$ or $\eta_0^E$, it follows that $\eta^\theta$ must cross either $\eta_w^W$ or $\eta_w^E$ at least twice. By~\cite[Theorem 1.9]{ig4}, two flow lines of $\wh h$ with different angles a.s.\ do not cross more than once, so we obtain the desired contradiction.
\end{proof}

In the next several lemmas, we will consider the filtration
\eqb \label{eqn:wpsf-filtration}
\mcl F_t := \sigma(\eta'|_{(-\infty ,t]} , \, \wh h|_{\eta'((-\infty , t])} , \, h) ,\qquad \forall t\in\BB R .
\eqe 
In the above definition we view $\eta'|_{(-\infty ,t]}$ as a curve embedded in the complex plane with a certain parameterization of time, and $\wh h|_{\eta'((-\infty , t])}$ as a random distribution restricted to a given domain of the complex plane (by contrast, in Section \ref{sec:lqg-lemma} we will often view the restriction of $\eta'$ as a curve on top of a quantum surface, hence we view it as a curve modulo conformal transformations).
We will show in Lemma~\ref{prop:wpsf-determined} below that the definition of $\mcl F_t$ is unchanged if we remove one of $\eta'|_{(-\infty ,t]} $ or $ \wh h|_{\eta'((-\infty , t])}$, but until this is established we need to include both in the definition of $\mcl F_t$. 

In what follows, we recall that if $A \subset \BB C$ is a random closed set coupled with $\wh h$, then $A$ is said to be a \emph{local set} for $\wh h$ if the conditional law of $\wh h|_{\BB C\setminus A}$ given $A$ and $\wh h|_A$ is that of a zero-boundary GFF on $\BB C\setminus A$ plus a harmonic function which depends only on $A$ and $\wh h|_A$. There are several other equivalent definitions of a local set which are given in~\cite[Lemma 3.9]{ss-contour}.

The following lemma describes the conditional law of $\wh h|_{\eta'([T,\infty))}$ given $\mcl F_T$ for a stopping time $T$.

\begin{lem}  \label{prop:ig-stopping}
	Let $T$ be a stopping time for the filtration $(\mcl F_t)$ of~\eqref{eqn:wpsf-filtration}. If $\kappa' \geq 8$, then the conditional law of $\wh h|_{ \eta'([T,\infty))}$ given $\mcl F_T$ is that of a zero-boundary GFF on $\eta'([T,\infty))$ plus the harmonic function which equals $\lambda' + \chi \op{arg} \phi'$ on the left boundary of $\eta'((-\infty , T])$ and $-\lambda' + \chi \op{arg} \phi'$ on the right boundary of $\eta'((-\infty , T])$, where $\lambda'$, $\chi$ are as in~\eqref{eqn:ig-parameter} and $\phi : \eta'([T,\infty)) \rta \BB H$ is a conformal map which takes $\eta'(T)$ to 0 and $\infty$ to $\infty$, and we define $\op{arg}\phi'$ such that it varies continuously. Furthermore, the conditional law of $\eta'|_{[T,\infty)}$ given $\mcl F_T$ is that of the 0-angle space-filling counterflow line of $\wh h |_{\eta'([T,\infty))}$ from $\eta'(T)$ to $\infty$. 
	
	If $\kappa' \in (4,8)$, let $\mcl U$ be the set of connected components of the interior of $\eta'([T,\infty))$. For $U\in \mcl U$, let $x_U$ (resp.\ $y_U$) be the last (resp.\ first) point of $\bdy U$ hit by $\eta' |_{(-\infty,0]} $. Then for $U\in\mcl U$, the conditional law of $\wh h|_{ U}$ given $\mcl F_T$ is that of a zero-boundary GFF on $U$ plus the harmonic function which equals $\lambda' + \chi \op{arg} \phi'$ on the left boundary of $U$ and $-\lambda' + \chi \op{arg} \phi'$ on the right boundary of $U$, where $\phi : U \rta \BB H$ is a conformal map which takes $x_U$ to 0 and $y_U$ to $\infty$ and the left (resp.\ right) boundary of $U$ is defined as the intersection of $\bdy U$ with the left (resp.\ right) boundary of $\eta'((-\infty , T])$. 
	The restrictions of $\wh h$ to different choices of $U \in \mcl U$ are conditionally independent given $\mcl F_T$. Furthermore, the conditional law of $\eta' \cap U$ given $\mcl F_T$ is that of the 0-angle space-filling counterflow line of $\wh h |_{U}$ from $x_U$ to $y_U$. 
\end{lem}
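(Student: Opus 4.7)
The plan is to deduce this lemma from the whole-plane imaginary-geometry framework of \cite{ig1,ig2,ig3,ig4}, in which the space-filling counterflow line is built directly from $\wh h$ and the corresponding Markov property is already known for stopping times with respect to the curve's own filtration. Since $h$ is independent of $(\wh h,\eta')$, I would first condition on $h$ and reduce to a statement about the conditional law of $\wh h$ and $\eta'|_{[T,\infty)}$ given the sub-$\sigma$-algebra $\sigma(\eta'|_{(-\infty,T]},\,\wh h|_{\eta'((-\infty,T])})$.

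The key ingredient is then to show that $\eta'((-\infty,T])$ together with the restriction $\wh h|_{\eta'((-\infty,T])}$ is a local set of $\wh h$ in the sense of \cite{ss-contour}. For a stopping time $T$ taking countably many values this is immediate from \cite[Theorems 1.2 and 1.16]{ig4}: at each possible value of $T$, the outer boundary of $\eta'((-\infty,T])$ is a concatenation of flow lines of $\wh h$ of angle $\pm \pi/2$, and \cite{ig4} identifies the conditional boundary data of $\wh h$ along a $\pm \pi/2$-angle flow line as $\pm \lambda' + \chi \arg \phi'$, exactly matching the harmonic function in the statement. The general case would follow by approximating $T$ from above by stopping times $T_n$ taking values in $2^{-n}\Z$, applying the local-set property at each $T_n$, and passing to the limit using continuity of the flow lines and the stability of local sets under suitable limits.

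For $\kappa'\geq 8$, the complement of $\eta'((-\infty,T])$ is simply connected and $\eta'|_{[T,\infty)}$ is an ordinary chordal $\SLE_{\kappa'}$ from $\eta'(T)$ to $\infty$ in this complement, so the claim follows from the conformal Markov property of $\wh h$ applied to this local set together with the construction of chordal space-filling $\SLE_{\kappa'}$ from the GFF in \cite[Section 4.3]{ig4}. For $\kappa'\in(4,8)$, the complement decomposes into the countable family $\mcl U$ of bubbles; by the chain-of-chordal-pieces description of space-filling $\SLE_{\kappa'}$ recalled in Section~\ref{sec:sle-background} and \cite[Footnote 9]{wedges}, inside each $U\in\mcl U$ the curve $\eta'\cap U$ is a chordal space-filling $\SLE_{\kappa'}$ from $x_U$ to $y_U$; the restrictions of $\wh h$ to distinct bubbles are conditionally independent because the union of their boundaries is a local set, so the lemma reduces in each bubble to the known chordal statement.

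The main obstacle I anticipate is the approximation argument at a general stopping time $T$, since the filtration $(\mcl F_t)$ records the field values along the traced curve and, for $\kappa'\in(4,8)$, the set-valued process $t\mapsto \eta'((-\infty,t])$ is not monotone in the subset ordering (bubbles are opened and only later filled). I would handle this by working with countable-valued approximations $T_n \downarrow T$, using that $\wh h$ together with its flow lines from a dense set of base points determines $\eta'$ (by \cite[Theorem 1.16]{ig4}), so that $\eta'((-\infty,T])$ may be described as an increasing limit of unions of flow-line regions; the local-set property and the boundary-data identification then pass to the limit by standard GFF arguments.
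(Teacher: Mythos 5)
Your proposal is correct and follows essentially the same route as the paper: identify the outer boundary of $\eta'((-\infty,T])$ as a concatenation of $\pm\pi/2$-angle flow lines of $\wh h$, invoke the imaginary-geometry Markov property (\cite[Theorem 1.1]{ig4}) to read off the boundary data $\pm\lambda'+\chi\arg\phi'$, and pass from a countable family of special stopping times to general $T$ by an approximation argument. The only cosmetic difference is that the paper takes as its base case the hitting times $T_z$ of deterministic points $z\in\BB Q^2$ (so the boundary flow lines emanate from deterministic points) rather than countably-valued stopping times directly, which slightly simplifies the appeal to \cite{ig4}.
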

\begin{proof} 
	For $z\in\BB C$, let $T_z$ be the first time that $\eta'$ hits $z$. By the construction of space-filling $\op{SLE}_{\kappa'}$ in~\cite[Section 1.2.3]{ig4}, it is a.s.\ the case that for each $z\in \BB Q^2$, the left and right boundaries of $\eta'|_{(-\infty , T_z]}$ are given by the angle $\pm \pi/2$-flow lines $\eta^W$ and $\eta^E$ of $\wh h$ started from $z$. By~\cite[Theorem 1.1]{ig4}, it follows that the conditional law of $\wh h|_{\eta'([T_z , \infty))}$ given $\mcl F_{T_z}$ is as described in the statement of the lemma with $T_z$ in place of $T$. Furthermore, it follows from the construction of space-filling $\op{SLE}_{\kappa'}$ in~\cite[Section 1.2.3]{ig4} that the conditional law of $\eta'$ given $\mcl F_{T_z}$ is as described in the lemma with $T_z$ in place of $T$. From this, we obtain the statement of the lemma for a general stopping time $T$ for $\eta'$ which is a.s.\ equal to $T_z$ for some $z\in\BB Q^2$. We obtain the statement of the lemma when $T$ is a general stopping time for $(\mcl F_t)$ via a straightforward limiting argument using  continuity of $\eta'$ and the fact that the local set property is well-behaved under Hausdorff distance limits~\cite[Lemma 6.8]{MS-duke}.
\end{proof}

We know from~\cite[Theorem 1.16]{ig4} that $\eta'$ and $\wh h$ a.s.\ determine each other. This determination is local, in the following sense.

\begin{lem} \label{prop:wpsf-determined}
	Let $T$ be a stopping time for the filtration $(\mcl F_t)$ of~\eqref{eqn:wpsf-filtration}. Then $\eta'|_{(-\infty ,T]}$ is a.s.\ determined by $h$, $\eta'((-\infty , T])$, and $\wh h|_{\eta'((-\infty , T])}$. Furthermore, $\wh h|_{\eta'((-\infty , T])}$ is a.s.\ determined by $\eta'|_{(-\infty , T]}$. 
\end{lem}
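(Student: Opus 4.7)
This is a local version of the global measurability result \cite[Theorem 1.16]{ig4}, and the plan is to bootstrap from two facts established in imaginary geometry. First, for every $z\in\BB C$, if $T_z$ denotes the first time $\eta'$ hits $z$, then the left and right outer boundaries of $\eta'((-\infty,T_z])$ coincide with the flow lines of $\wh h$ from $z$ of angle $\pi/2$ and $-\pi/2$ respectively (this is part of the construction of whole-plane space-filling $\operatorname{SLE}_{\kappa'}$ in \cite[Sections 1.2.3 and 4.3]{ig4}). Second, flow lines of $\wh h$ are local in the sense that each flow line is a measurable functional of the restriction of $\wh h$ to any neighborhood of it. An immediate consequence of the first fact is that for every $z\in\BB C$ with $T_z\le T$, both the $\pi/2$-angle and $-\pi/2$-angle flow lines of $\wh h$ from $z$ are contained in $\eta'((-\infty,T])$.

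For the second assertion, fix a countable dense subset $\{z_k\}_{k\in\BB N}\subset\BB C$. For each $k$ with $T_{z_k}\le T$ (which is an event measurable with respect to $\eta'|_{(-\infty,T]}$), we can read off the $\pi/2$-angle and $-\pi/2$-angle flow lines of $\wh h$ from $z_k$ directly as the left and right outer boundaries of $\eta'((-\infty,T_{z_k}])$, so these flow lines are measurable with respect to $\eta'|_{(-\infty,T]}$. Taking the union over all such $k$, we recover the restrictions to $\eta'((-\infty,T])$ of the trees of $\pm\pi/2$-angle flow lines of $\wh h$. Repeating the argument in the proof of \cite[Theorem 1.16]{ig4} then shows that these two trees determine $\wh h|_{\eta'((-\infty,T])}$ (modulo a global additive multiple of $2\pi\chi$).

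For the first assertion we reverse the procedure. From $\wh h|_{\eta'((-\infty,T])}$ and locality of flow lines, we construct the $\pm\pi/2$-angle flow lines of $\wh h$ from every point $z$ in a countable dense subset of $\eta'((-\infty,T])$, using that each such flow line lies inside $\eta'((-\infty,T])$. The Peano curve of the $\pi/2$-angle flow line tree then produces the unparameterized trace of $\eta'|_{(-\infty,T]}$, viewed as a curve from $\infty$ to the tip $\eta'(T)$; the tip is identifiable from the set $\eta'((-\infty,T])$ alone as the unique common endpoint of its left and right outer boundary arcs. We then use $h$ to recover the time parameterization via the relation $\mu_h(\eta'([s,T]))=T-s$. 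The main difficulty is justifying that the flow-line construction really can be carried out using $\wh h|_{\eta'((-\infty,T])}$ in the regime $\kappa'\in(4,8)$, where $\eta'((-\infty,T])$ is a countable union of bubbles previously visited by $\eta'$ together with an unbounded complementary component; this is handled componentwise, using Lemma~\ref{prop:ig-stopping} to specify the boundary data for $\wh h$ on each such component.
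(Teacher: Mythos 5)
Your plan is constructive and genuinely different from the paper's argument, and it contains a real gap at its crux. For the second assertion, after extracting the $\pm\pi/2$-angle flow lines from the boundaries of $\eta'((-\infty,T_{z_k}])$ for a dense set of $z_k$ (which is fine), you conclude by ``repeating the argument in the proof of \cite[Theorem 1.16]{ig4}'' to get that these trees determine $\wh h|_{\eta'((-\infty,T])}$. But localizing that global determination result is precisely the content of the lemma; it is not clear that the proof of \cite[Theorem 1.16]{ig4} runs verbatim on the restriction of the field to $\eta'((-\infty,T])$ (a random domain, and for $\kappa'\in(4,8)$ a countable chain of bubbles), and you give no argument for it. The same issue recurs in your first assertion: the ``locality of flow lines'' you invoke (that a flow line of $\wh h$ started inside $\eta'((-\infty,T])$ is a measurable function of $\wh h|_{\eta'((-\infty,T])}$) needs justification — one has to argue that the whole-plane flow line agrees with the flow line of the restricted field, which requires a local-set argument of the type carried out in Lemma~\ref{prop:curve-future}, not just a citation to the construction.

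The paper avoids all of this with a soft argument. By Lemma~\ref{prop:ig-stopping}, the conditional law of the ``future'' $(\eta'|_{[T,\infty)},\wh h|_{\eta'([T,\infty))})$ given $\mcl F_T$ depends only on the set $\eta'((-\infty,T])$, so the past $(\eta'|_{(-\infty,T]},\wh h|_{\eta'((-\infty,T])})$ and the future are conditionally independent given $h$ and $\eta'((-\infty,T])$. Feeding this conditional independence into the \emph{global} two-way determination of \cite[Theorem 1.16]{ig4} (the pair $(h,\wh h)$ determines $\eta'$, and $\eta'$ modulo reparameterization determines $\wh h$) immediately yields both local statements by elementary measure theory: a random variable measurable with respect to the past that is a function of (past, future) and conditionally independent of the future must be a function of the conditioning data and the past alone. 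If you want to keep your constructive route, you would need to supply the localization of \cite[Theorem 1.16]{ig4} (e.g.\ via time-reversal and a chordal version of the determination theorem in $\eta'((-\infty,T])$), which is substantially more work; otherwise I recommend the conditional-independence argument.
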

\begin{proof}
	By Lemma~\ref{prop:ig-stopping}, the conditional law of $(\eta'|_{[T,\infty)} , \wh h|_{\eta'([T,\infty))} )$ given $\mcl F_T$ depends only on $\eta'((-\infty ,T])$. 
	Hence $(\eta'|_{[T,\infty)} , \wh h|_{\eta'([T,\infty))} )$ is conditionally independent from $\mcl F_T$ given $h$ and $\eta'((-\infty ,T])$, so $(\eta'|_{(-\infty , T]} , \wh h|_{\eta'((-\infty , T])})$ and $(\eta'|_{[T,\infty)} , \wh h|_{\eta'([T,\infty))})$ are conditionally independent given $h$ and $\eta'((-\infty , T])$. 
	Since $h$ and $\wh h$ a.s.\ determine $\eta'$~\cite[Theorem 1.16]{ig4}, it follows that $\eta'|_{(-\infty , T]}$ is a.s.\ determined by $h$, $\eta'((-\infty , T])$ and $\wh h|_{\eta'((-\infty , T])}$. 
	Furthermore, since $\eta'$, viewed modulo monotone re-parameterization, a.s.\ determines $\wh h$~\cite[Theorem 1.16]{ig4}, it follows that $\wh h|_{\eta'((-\infty , T])}$ is a.s.\ determined by $\eta'|_{(-\infty , T]}$. 
\end{proof}

Our next two lemmas concern the probabilistic properties of $\eta^\theta$.

\begin{lem}  \label{prop:curve-future}
	Let $T$ be a stopping time for $(\mcl F_t)$ such that $\eta'(T) \in \eta^\theta$ a.s. If $\kappa' \geq 8$, then if we condition on $\mcl F_T$, the curve (recall Lemma~\ref{prop:chronological}) $\eta^\theta \cap \eta'([T,\infty))$ is the $\theta$-angle flow line of $\wh h|_{\eta'([T,\infty))}$ started from $\eta'(T)$. 
	
	If $\kappa' \in (4,8)$, then with $\mcl U$ and $(x_U  , y_U)$ for $U\in\mcl U$ as in Lemma~\ref{prop:ig-stopping}, if we condition on $\mcl F_T$ the curve $\eta^\theta \cap \eta'([T,\infty))$ is the concatenation of the $\theta$-angle flow lines of $\wh h|_U$ started from $x_U$ in the order that the sets $U\in\mcl U$ are filled in by $\eta'$. 
\end{lem}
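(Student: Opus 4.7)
The plan is to reduce to the case where $T = T_z$ is the first hitting time of a deterministic point $z \in \BB Q^2$ by $\eta'$, and then apply Lemma~\ref{prop:ig-stopping} together with standard imaginary geometry results from~\cite{ig4}. The overall template mirrors the proof of Lemma~\ref{prop:ig-stopping}: prove the claim for hitting times of rational points, then pass to a general $(\mcl F_t)$-stopping time via an approximation argument using regularity statements like~\cite[Lemma 6.8]{qle}. Throughout, the curve $\eta^\theta\cap\eta'([T,\infty))$ is interpreted via Lemma~\ref{prop:chronological} as the portion of $\eta^\theta$ traced after it first reaches $\eta'(T)$.

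In the case $\kappa'\geq 8$, I would condition on $\mcl F_{T_z}$. By Lemma~\ref{prop:ig-stopping}, the conditional law of $\wh h|_{\eta'([T_z,\infty))}$ is that of a GFF on the simply connected domain $\eta'([T_z,\infty))$ with boundary data that matches the imaginary geometry setup under the conformal map sending $\eta'(T_z)=z$ to $0$. The full flow line $\eta^\theta$, from any time at which it hits $z$, is itself a flow line of $\wh h$ started from $z$ by the strong Markov property for flow lines of the whole-plane GFF~\cite[Theorem 1.2]{ig4}. Moreover, the boundary of $\eta'((-\infty,T_z])$ is formed by angle $\pm \pi/2$ flow lines of $\wh h$ starting at $0$ and $z$, and by the crossing result~\cite[Theorem 1.9]{ig4} the $\theta$-angle flow line from $z$ cannot cross these again after its initial contact at $z$. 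Hence this flow line stays inside $\eta'([T_z,\infty))$ and, by the characterization of flow lines as local sets, agrees with the $\theta$-angle flow line of the restricted field $\wh h|_{\eta'([T_z,\infty))}$ started from $z$.

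In the case $\kappa'\in(4,8)$, the complement $\eta'([T_z,\infty))$ decomposes as a countable disjoint union of simply connected components $U\in\mcl U$, filled in by $\eta'$ in a definite order. For each $U$, the boundary $\bdy U$ is a concatenation of pieces of $\pm \pi/2$ angle flow lines of $\wh h$, and $\eta^\theta$ enters $U$ at some point of $\bdy U$ and exits at another. Using the flow line interaction rules of~\cite[Theorems 1.7 and 1.9]{ig4} (in particular the characterization of intersection angles at which a $\theta$-angle flow line can meet a $\pm\pi/2$-angle flow line, both for the non-crossing and crossing regimes), one identifies the entry point of $\eta^\theta$ into $U$ as the unique boundary point $x_U$ and the exit point as $y_U$. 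Componentwise application of Lemma~\ref{prop:ig-stopping}, together with the strong Markov property for flow lines inside each $U$, then gives that $\eta^\theta\cap U$ is the $\theta$-angle flow line of $\wh h|_U$ from $x_U$ to $y_U$. The ordering of segments is immediate from Lemma~\ref{prop:chronological}, which says that $\eta^\theta$ visits points in the order they are covered by $\eta'$.

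The main obstacle will be in the $\kappa'\in(4,8)$ case, namely verifying that $\eta^\theta$ always enters each component $U$ precisely at $x_U$ rather than at some other point of $\bdy U$. This is genuinely delicate in the regime where flow lines of different angles can hit each other without crossing, and must be handled by a careful case analysis of the geometry of $\bdy U$ near each potential entry point, keeping track of which side of $\eta'((-\infty,T_z])$ each piece of $\bdy U$ lies on, and comparing with the chronological order constraint provided by Lemma~\ref{prop:chronological}. Everything else, including the approximation reduction and the componentwise flow line identification, is essentially a repeated application of tools already used in the proofs of Lemma~\ref{prop:ig-stopping} and Lemma~\ref{prop:wpsf-determined}.
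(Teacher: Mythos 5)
There is a genuine gap, and it starts at the very first step. Your reduction to hitting times $T_{z}$ of deterministic points $z\in\BB Q^2$ is incompatible with the hypothesis of the lemma: the lemma requires $\eta'(T)\in\eta^\theta$ a.s., but for a fixed deterministic $z$ one a.s.\ has $z\notin\eta^\theta$ (the flow line has zero Lebesgue measure), so the stopping times $T_z$ never satisfy the hypothesis and cannot serve as a base case for an approximation scheme. The paper does not make this reduction at all; it works with a general $(\mcl F_t)$-stopping time $T$ directly, and the approximation step in its proof concerns a stopping time $\sigma$ for $\eta^\theta$, not $T$. More importantly, your argument for the $\kappa'\geq 8$ case establishes only a containment statement (that the continuation of $\eta^\theta$ stays in $\eta'([T,\infty))$ because it cannot recross the $\pm\pi/2$-angle boundary flow lines), which is far weaker than the assertion of the lemma. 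The lemma is a statement about the \emph{conditional law} of the continuation of $\eta^\theta$ given $\mcl F_T$, and $\mcl F_T$ contains the field on all of $\eta'((-\infty,T])$, which is a much larger set than the past of the flow line $\eta^\theta$; the ``strong Markov property for flow lines'' you invoke conditions on the wrong $\sigma$-algebra. The paper's mechanism is to form the union $A=\eta'((-\infty,T])\cup\eta^\theta([0,\sigma])$ for a stopping time $\sigma$ of $\eta^\theta$, use the union-of-local-sets lemmas \cite[Lemmas 3.10 and 3.11]{ss-contour} together with Lemma~\ref{prop:wpsf-determined} (to identify $\mcl F_T$ with $\sigma(\wh h|_{\eta'((-\infty,T])},h)$) to compute the conditional law of $\wh h$ off $A$, and then apply the flow-line characterization \cite[Theorems 1.1 and 2.4]{ig1} after conformally mapping to $\BB H$. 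None of this is present in your outline.

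Two further omissions: the flow-line characterization in \cite{ig1} requires verifying that the image of $\eta^\theta\cap\ol U$ under the uniformizing map has a continuous Loewner driving function, which the paper deduces from \cite[Proposition 6.12]{ig1} plus the fact that $\eta^\theta$ cannot trace a $\pm\pi/2$-angle flow line along a nontrivial arc; you do not address this. And in the $\kappa'\in(4,8)$ case you explicitly defer the identification of the entry point of $\eta^\theta$ into each component $U$ as $x_U$, calling it ``the main obstacle'' and leaving it to an unspecified case analysis. In the paper this identification comes out of the local-set computation of the boundary data on $\bdy U$ combined with the chronological ordering of Lemma~\ref{prop:chronological}, rather than from a separate geometric case analysis, so as written your proposal leaves the hardest part of that case unproved.
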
  
\begin{proof} 
	Let $\eta^\theta$ be parameterized by $\gamma$-quantum length with respect to $h$ and let $\sigma$ be a stopping time for $\eta^\theta$, conditioned on $h$. Also let 
	\eqbn
	A :=  \eta'((-\infty , T]) \cup \eta^\theta([0,\sigma]) .
	\eqen  
	Each of $\eta'((-\infty , T])$ and $\eta^\theta([0,\sigma])$ are local sets for the conditional law of $\wh h$ given $h$, and both are a.s.\ determined by $h$ and $\wh h$~\cite[Theorem 1.2]{ig1},~\cite[Theorem 1.16]{ig4}. By~\cite[Lemmas 3.10 and 3.11]{ss-contour}, $A$ is a local set for $\wh h$ given $h$, and the conditional law of $\wh h|_{\BB C\setminus A}$ given $A$, $\wh h|_A$, and $h$ is given by an independent GFF in each complementary connected component of $\BB C\setminus A$, with the boundary data we would expect if $\eta^\theta \cap \eta'([T,\infty))$ were a flow line for the conditional law of $\wh h$ given $\mcl F_T$. By Lemma~\ref{prop:wpsf-determined}, $\mcl F_T$ is the same as the $\sigma$-algebra generated by $\wh h|_{\eta'((-\infty , T])}$ and $h$. Therefore, the conditional law of $\wh h|_{\BB C\setminus A}$ given $\mcl F_T$, $\eta^\theta([0,\sigma])$, and $\wh h|_{\eta^\theta([0,\sigma])}$ is given by an independent GFF in each complementary connected component of $\BB C\setminus A$, with the boundary data we would expect if $\eta^\theta \cap \eta'([T,\infty))$ were a flow line for $\wh h |_{\eta'([T,\infty))}$. By approximating a general stopping time for the conditional law of $\eta^\theta$ given $\mcl F_T$ by stopping times which take on only countably many possible values and applying~\cite[Lemma 6.8]{MS-duke}, we find that the same holds with $\sigma$ replaced by a stopping time for the conditional law of $\eta^\theta$ given $\mcl F_T$. The statement of the lemma now follows by conformally mapping $\eta'([T,\infty))$ (if $\kappa' \geq 8$) or a given $U\in\mcl U$ (if $\kappa'\in (4,8)$) to $\BB H$ and applying~\cite[Theorems 1.1 and 2.4]{ig1}. The condition that the image of $\wh\eta^\theta\cap \ol U$ under this conformal map has a continuous Loewner driving function follows from~\cite[Proposition 6.12]{ig1} and the fact that $\eta^\theta$ cannot trace an east-going or west-going flow line of $\wh h$ (which form the left and right boundaries of $\eta'$) along a non-trivial arc~\cite[Theorem 1.9]{ig4}.  
\end{proof}

\begin{lem} \label{prop:north-msrblty} 
	Let $t \geq 0$ and let $\eta^\theta_t$ be the segment of $\eta^\theta$ which is covered (Definition~\ref{def:covered}) by $\eta'$ at time $t$. Then $\eta^\theta_t$ is a.s.\ determined by $  \eta'|_{[0,t]}$. 
\end{lem}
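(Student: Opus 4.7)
The plan is to recover $\eta^\theta_t$ from $\eta'|_{[0,t]}$ in two stages: first reconstruct the restriction of the whole-plane GFF $\wh h$ (viewed modulo global additive multiples of $2\pi\chi$) to the open set $U_t := \eta'([0,t])^\circ$, and second extract $\eta^\theta_t$ as the initial portion of a flow line of this reconstructed field. By Lemma~\ref{prop:chronological}, $\eta^\theta_t$ coincides (as a set) with the segment of $\eta^\theta$ traced until its first exit from $U_t$. By the imaginary geometry results of \cite{ig1,ig4}, this segment is a.s.\ determined by $\wh h|_{U_t}$ together with the IG boundary data along $\bdy U_t$; and the latter can be read off from how $\eta'|_{[0,t]}$ approaches $\bdy U_t$, since the relevant arcs of $\bdy U_t$ are initial segments of the angle $\pm \pi/2$ flow lines of $\wh h$ from $0$ and from $\eta'(t)$, which form the left and right boundaries of $\eta'$ at times $0$ and $t$.

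It therefore suffices to show that $\wh h|_{U_t}$ is a.s.\ determined by $\eta'|_{[0,t]}$ (modulo the $2\pi\chi \BB Z$ ambiguity, which preserves flow lines of every angle). For this, the idea is to exploit the locality of the map $\eta'\mapsto \wh h$ coming from \cite[Theorem 1.16]{ig4}: for any open set $V\Subset U_t$ there is a sub-interval $(a,b)\subset [0,t]$ with $V\subset \eta'((a,b))$, and the tree of flow lines of $\wh h$ used to construct $\eta'$ interacts with $V$ only through $\eta'|_{(a,b)}$, so $\wh h|_V$ is a deterministic function of $\eta'|_{(a,b)}$. Exhausting $U_t$ by a countable collection of such $V$ and passing to the limit yields that $\wh h|_{U_t}$ is a deterministic function of $\eta'|_{[0,t]}$, which combined with the previous paragraph gives the lemma.

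The main obstacle is the rigorous justification of the locality claim above. Lemma~\ref{prop:wpsf-determined} provides only half-line locality (that $\eta'|_{(-\infty,T]}$ determines $\wh h|_{\eta'((-\infty,T])}$), whereas here one needs locality in the two-sided window $[0,t]$. A clean route is a conditional-independence argument: one shows that conditional on $\eta'|_{[0,t]}$, the restriction $\wh h|_{U_t}$ is independent of $\eta'|_{(-\infty,0]}$, after which Lemma~\ref{prop:wpsf-determined} applied with $T=t$ forces $\wh h|_{U_t}$ to be a.s.\ constant under this conditional distribution. Alternatively, one can argue directly from the flow-line construction of $\wh h$ from its tree of angle-$\pi/2$ flow lines, where locality with respect to $\eta'$ holds by construction.
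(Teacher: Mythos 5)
Your overall strategy has the same skeleton as the paper's argument: reduce the lemma to a conditional-independence statement (that, given the data in the window $[0,t]$, the field inside $\eta'([0,t])$ carries no further information about the past of the curve) and then conclude by measurability. The problem is that the proposal never actually establishes that conditional independence, and this step is the entire content of the lemma. Both routes you offer for it are assertions rather than arguments: the claim that $\wh h|_V$ is a deterministic function of $\eta'|_{(a,b)}$ whenever $V\Subset \eta'((a,b))$ is itself a two-sided locality statement that follows neither from \cite[Theorem 1.16]{ig4} (which only says the whole curve determines the whole field) nor from Lemma~\ref{prop:wpsf-determined} (which is one-sided); and ``one shows that conditional on $\eta'|_{[0,t]}$, the restriction $\wh h|_{U_t}$ is independent of $\eta'|_{(-\infty,0]}$'' is precisely the statement that needs proof, not a proof of it.

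The paper supplies the missing step with a time-reversal trick. Writing $D$ for the interior of $\eta'([0,\infty))$ and $\ol\eta':=\eta'(-\cdot)$, the time $-t$ is a stopping time for $\ol\eta'$ given $\mcl F_0$ (it is the first reversed time at which the unexplored part of $D$ has $\mu_h$-mass at most $t$), so one may apply the Markov property of the GFF \cite[Proposition 6.1]{ig1} to the local set $\eta'([t,\infty))$: given $\mcl F_0$ and $\eta'([t,\infty))$, the conditional law of $\wh h|_{D\setminus\eta'([t,\infty))}$ is a GFF whose boundary data depends only on the set $\eta'([0,t])$, and $\ol\eta'|_{[-t,0]}$ and $\eta^\theta_t$ are respectively the counterflow line and the $\theta$-angle flow line of that field. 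This gives exactly the conditional independence of $(\wh h|_{D\setminus\eta'([t,\infty))},\eta'|_{[0,t]},\eta^\theta_t)$ from $\mcl F_0$ given $\eta'([0,t])$, after which the conclusion follows along the lines you describe. Without this (or an equivalent justification) the proof is incomplete. Two smaller points: identifying $\eta^\theta_t$ with ``the flow line run until it first exits $U_t$'' needs care when $\kappa'\in(4,8)$, where $U_t$ is a chain of bubbles and $\eta^\theta_t$ is a concatenation of flow lines in the successive components; and reading the imaginary-geometry boundary data ``off how $\eta'|_{[0,t]}$ approaches $\bdy U_t$'' is not needed once one has the Markov property above, since the boundary data is already pinned down by Lemma~\ref{prop:ig-stopping}.
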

\begin{proof}
	Let $D $ be the interior of $\eta'([0,\infty))$ and let $\ol\eta':= \eta'(-\cdot) $ be the time reversal of $\eta'$. Lemma~\ref{prop:ig-stopping} tells us that the conditional law of $\wh h|_D$ given $\mcl F_0$ is that of an independent zero-boundary GFF in each connected component of $D$ plus a certain harmonic function which depends only on $ D$. Furthermore, conditioned on $\mcl F_0$, it holds that $\ol\eta'|_{(-\infty , 0]}$ is the 0-angle space-filling counterflow line of $\wh h|_{\eta'([t,\infty))}$ from $\infty$ to 0 and $\eta^\theta $ is the $\theta$-angle flow line of $\wh h|_{\eta'([0,\infty))}$ from 0 to $\infty$. 
	
	Observe that $-t$ is the smallest time $s < 0$ for which $D \setminus \ol\eta' ((-\infty,s])$ has $\mu_h$-mass at most $t$, so $-t$ is a stopping time for $\ol\eta' $, conditioned on $\mcl F_0$. 
	By~\cite[Proposition 6.1]{ig1}, the conditional law of $\wh h |_{D\setminus \eta'([t,\infty))}$ given $\eta' ([t,\infty) ) $ and $\mcl F_0$ is that of an independent zero-boundary GFF in each connected component of $D\setminus \eta' ([t,\infty))$ plus a certain harmonic function which depends only on $\eta'([0,t])$. Furthermore, $\ol\eta'|_{[-t,0]}$ is the 0-angle space-filling counterflow line of this field started from $\eta' (t)$, and $\eta^\theta_t$ is the $\theta$-angle flow line of this field started from 0 (or the concatenation of the $\theta$-angle space-filling counterflow lines and flow lines in each component of $\eta'([0,t])$ if $\kappa'\in (4,8)$). In particular, $\wh h |_{D\setminus \eta' ([t,\infty) )} $, $ \eta'|_{[0,t]}$, and $\eta^\theta_t$ are conditionally independent from $\mcl F_0$ given $ \eta' ([0,t])$.
	
	By~\cite[Theorem 1.2]{ig1} and~\cite[Theorem 1.16]{ig4}, it follows that if we condition on $\eta'([0,t])$, then $\eta^\theta_t$ is a.s.\ determined by $\eta' |_{[0,t]}$. Hence $\eta^\theta_t$ is a.s.\ determined by $ \eta'|_{[0,t]}$. 
\end{proof}

\subsection{Liouville quantum gravity lemmas}
\label{sec:lqg-lemma}

Although the Brownian motion $Z$ a.s.\ determines $h$ and $\eta'$ modulo a complex affine transformation, this Brownian motion does not determine $\eta'$ and $h$ locally, in the sense that if $a,b\in\BB R$ with $a<b$, then the ``shape'' of $\eta'([a,b])$ (i.e.\ its embedding into $\BB C$, modulo complex affine transformations) is \emph{not} determined by $(Z-Z_a)|_{[a,b]}$. However, $(Z-Z_a)|_{[a,b]}$ a.s.\ determines $(h|_{\eta'([a,b])} , \eta'|_{[a,b]})$ modulo the choice of embedding into $\BB C$ (see Lemma~\ref{prop:local-msrblty} just below). To make this idea precise, we will use the following notation.

\begin{defn} \label{def:sle-surface}
	For $a,b\in \BB R \cup \{-\infty , \infty\}$ with $a<b$, let $\mcl S_{a,b} = (\eta'([a,b]) , h|_{\eta'([a,b])})$ be the quantum surface obtained by restricting $h$ to $\eta'([a,b])$. Also let $\wh\eta'_{a,b}$ be the curve $\eta'|_{[a,b]}$, viewed as a curve on the surface $\mcl S_{a,b}$ (still parameterized by $\gamma$-quantum mass).
\end{defn}

\begin{lem} \label{prop:local-msrblty}
	For $a , b \in \BB R \cup \{-\infty , \infty\}$ with $a<b$, the pairs $(\mcl S_{a,b} , \wh\eta'_{a,b})$ and $(Z-Z_a)|_{[a,b]}$ a.s.\ determine each other. 
\end{lem}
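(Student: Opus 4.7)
My plan is to prove the two implications separately. The direction $(\mcl S_{a,b}, \wh\eta'_{a,b}) \Rightarrow (Z - Z_a)|_{[a,b]}$ is essentially geometric: for each $s\in[a,b]$, the subregion $\eta'([a,s])$ is intrinsic to the marked surface $(\mcl S_{a,b}, \wh\eta'_{a,b})$ (it is just the image of $[a,s]$ under the curve), and its boundary inside $\mcl S_{a,b}$ decomposes into four arcs: two lying on the past frontier of $\eta'((-\infty, a])$ and two lying on the current frontier of $\eta'((-\infty, s])$. The increment $L_s - L_a$ (resp.\ $R_s - R_a$) is then the $\nu_h$-length of the current-left (resp.\ current-right) arc minus the $\nu_h$-length of the corresponding past arc; all four quantum lengths are intrinsic to the marked quantum surface, so $(Z-Z_a)|_{[a,b]}$ is recovered.

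For the reverse direction $(Z - Z_a)|_{[a,b]} \Rightarrow (\mcl S_{a,b}, \wh\eta'_{a,b})$, I would begin from the peanosphere theorem \cite[Theorem 1.14]{wedges}, which already gives that $Z$ on all of $\BB R$ determines $(h, \eta')$ modulo rotation, and hence determines $(\mcl S_{a,b}, \wh\eta'_{a,b})$. The task is to localize: to show that this pair depends only on $(Z-Z_a)|_{[a,b]}$. I would apply the mating-of-trees theorem for quantum wedges \cite[Theorem 1.11 and Sections~9--10]{wedges} to the wedge $\mcl S_{a,\infty}$, which is canonically encoded by $(Z-Z_a)|_{[a,\infty)}$; cutting this wedge at quantum time $b - a$ shows that $(\mcl S_{a,b}, \wh\eta'_{a,b})$ is measurable with respect to $\sigma((Z-Z_a)|_{[a,\infty)})$. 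The symmetric argument applied to the time-reversed wedge $\mcl S_{-\infty, b}$ encoded by $(Z - Z_b)|_{(-\infty, b]}$ gives measurability with respect to $\sigma((Z-Z_b)|_{(-\infty, b]})$. By the Markov property of Brownian motion, the intersection of these two $\sigma$-algebras (viewing $Z$ modulo additive constants) is exactly $\sigma((Z-Z_a)|_{[a,b]})$, which yields the claim.

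The main obstacle is the localization step, namely identifying $\mcl S_{a,b}$ as a canonical subsurface of the wedge $\mcl S_{a,\infty}$ that is measurably determined by $(Z-Z_a)|_{[a,\infty)}$. The restriction of a quantum wedge to a bounded ``quantum time window'' $[a,b]$ is not itself one of the standard surfaces studied in \cite{wedges}, so some care is required to identify the restricted surface as an equivalence class of marked quantum surfaces. I would handle this by invoking the quantum Markov property at the cutting times together with the already-proven forward direction, using the latter as a consistency check to pin down the reconstruction uniquely; an alternative route would be to carry out the Moore's theorem quotient of the peanosphere construction directly on the restricted contour function $(Z-Z_a)|_{[a,b]}$ and verify that the pairwise identifications of points within $[a,b]$ are a local condition, so that the restricted quotient canonically recovers $(\mcl S_{a,b}, \wh\eta'_{a,b})$.
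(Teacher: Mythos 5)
Your proposal is correct and follows essentially the same route as the paper: establish that $(\mcl S_{a,b},\wh\eta'_{a,b})$ is measurable with respect to each of $(Z-Z_a)|_{[a,\infty)}$ and $(Z-Z_b)|_{(-\infty,b]}$ (via the half-infinite wedge pairs), then intersect the two $\sigma$-algebras using independence of Brownian increments. The only cosmetic difference is that the paper deduces that each half-infinite piece of $Z$ determines the corresponding wedge pair from the independence of the two wedges together with \cite[Theorem 1.14]{wedges}, rather than quoting a direct encoding theorem, and it treats the ``localization'' you worry about as immediate since $\eta'([a,b])$ is just the image of $[a,b]$ under the quantum-mass-parameterized curve on the wedge.
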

\begin{proof}  
	By~\cite[Theorem 1.9]{wedges}, for each $a \in \BB R$, the surfaces $\mcl S_{-\infty , a}$ and $\mcl S_{a,\infty}$ are independent quantum wedges of weight $2-\gamma^2/2$. By the construction of space-filling $\op{SLE}_{\kappa'}$ described in~\cite[Footnote 4]{wedges} and the conformal invariance of SLE, we find that the pairs $(\mcl S_{-\infty , a} , \wh\eta'_{-\infty , a})$ and $(\mcl S_{ a ,  \infty } , \wh\eta'_{ a ,  \infty })$ are independent. 
	
	It is clear that the pair $(\mcl S_{-\infty , a} , \wh\eta'_{-\infty , a})$ (resp.\ $(\mcl S_{ a ,  \infty } , \wh\eta'_{ a ,  \infty })$) a.s.\ determines $(Z-Z_a)|_{(-\infty , a]}$ (resp.\ $(Z-Z_a)|_{[a,\infty)}$).  Since $Z$ a.s.\ determines $(\mcl S_{-\infty , a} , \wh\eta'_{-\infty , a})$ and $(\mcl S_{ a ,  \infty } , \wh\eta'_{ a ,  \infty })$~\cite[Theorem 1.11]{wedges}, it follows that $(Z-Z_a)|_{(-\infty ,a]}$ (resp.\ $(Z-Z_a)|_{[a,\infty)}$) a.s.\ determines $(\mcl S_{-\infty , a} , \wh\eta'_{-\infty , a})$ (resp.\ $(\mcl S_{ a ,  \infty } , \wh\eta'_{ a ,  \infty })$). This completes the proof in the case when either $a = -\infty$ or $b = \infty$. 
	
	For $a,b\in\BB R$ with $a<b$, the pair $(\mcl S_{a,b} , \wh\eta'_{a,b})$ is a deterministic function of each of the pairs $(\mcl S_{-\infty , b} , \wh\eta'_{-\infty , b})$ and $(\mcl S_{a ,  \infty } , \wh\eta'_{ a ,  \infty })$. Consequently, the discussion in the first paragraph implies that $(\mcl S_{a,b} , \wh\eta'_{a,b})$ is a measurable function of each of $(Z-Z_b)|_{(-\infty ,b]}$ and $(Z-Z_a)|_{[a,\infty)}$. The intersection of the $\sigma$-algebra generated by $(Z-Z_b)|_{(-\infty ,b]}$ and $(Z-Z_a)|_{[a,\infty)}$ is the $\sigma$-algebra generated by $(Z-Z_a)|_{[a,b]}$. Consequently, $(\mcl S_{a,b} , \wh\eta'_{a,b})$ is a.s.\ determined by $(Z-Z_a)|_{[a,b]}$. It is clear that $(\mcl S_{a,b} , \wh\eta'_{a,b})$ a.s.\ determines $(Z-Z_a)|_{[a,b]}$, and we conclude. 
\end{proof}

There is also a variant of Lemma~\ref{prop:local-msrblty} with a stopping time for $Z$ in place of a deterministic time. 

\begin{lem} \label{prop:local-msrblty-stopping}
	Let $T$ be a stopping time for $Z$ and let $\mcl S_{T , \infty}$ and $\wh\eta'_{T,\infty}$ be as in Definition~\ref{def:sle-surface} (with $a = T$ and $b = \infty$). Then the conditional law of $(\mcl S_{T,\infty}, \wh\eta'_{T,\infty} )$ given $Z|_{(-\infty ,T]}$ is the same as the law of $(\mcl S_{0,\infty}, \wh\eta'_{0,\infty} )$, and is that of a $2-\gamma^2/2$-quantum wedge decorated by an independent space-filling $\op{SLE}_{\kappa'}$ from 0 to $\infty$ (or a concatenation of independent space-filling $\op{SLE}_{\kappa'}$'s in the bubbles of the surface if $\kappa' \in (4,8)$). Furthermore, $(\mcl S_{T,\infty}, \wh\eta'_{T,\infty} )$ and $(Z-Z_T)|_{[T,\infty)}$ a.s.\ determine each other.
\end{lem}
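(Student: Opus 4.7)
The plan is to reduce the statement to the deterministic time case handled in Lemma~\ref{prop:local-msrblty}, via the strong Markov property of $Z$ and an approximation by discrete stopping times.

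First, I would observe that by Lemma~\ref{prop:local-msrblty} applied to the pair $(a,b) = (0,\infty)$, there is a measurable map $F$ from the space of continuous paths $\BB R_{\geq 0}\to\BB R^2$ starting at $0$ to the space of (curve-decorated) quantum surfaces, such that $(\mcl S_{0,\infty}, \wh\eta'_{0,\infty}) = F(Z|_{[0,\infty)})$ almost surely. By translation invariance of the space-filling $\op{SLE}_{\kappa'}$/LQG construction in the time parameter of $\eta'$, the same map $F$ yields $(\mcl S_{a,\infty}, \wh\eta'_{a,\infty}) = F((Z-Z_a)|_{[a,\infty)})$ for every deterministic $a\in\BB R$. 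The $T=0$ case of the claimed distributional identity, namely that $(\mcl S_{0,\infty}, \wh\eta'_{0,\infty})$ is a $(2-\gamma^2/2)$-quantum wedge decorated by an independent space-filling $\op{SLE}_{\kappa'}$ (with the bubble convention if $\kappa'\in(4,8)$), was already recorded in the proof of Lemma~\ref{prop:local-msrblty} as a consequence of \cite[Theorem~1.12 and Proposition~8.6]{wedges} together with \cite[Footnote~9]{wedges}.

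Next I would handle the case of a stopping time $T$ taking only countably many values $\{t_k\}_{k\in\BB N}$. On the event $\{T=t_k\}$, the pair $(\mcl S_{T,\infty}, \wh\eta'_{T,\infty})$ coincides with $(\mcl S_{t_k,\infty}, \wh\eta'_{t_k,\infty}) = F((Z-Z_{t_k})|_{[t_k,\infty)})$. Since $\{T=t_k\}\in\sigma(Z|_{(-\infty,t_k]})$ and, by the strong Markov property of the Brownian motion $Z$, the increments $(Z-Z_{t_k})|_{[t_k,\infty)}$ are independent of $Z|_{(-\infty,t_k]}$ with the same law as $Z|_{[0,\infty)}$, it follows that conditionally on $\{T=t_k\}\cap\sigma(Z|_{(-\infty,T]})$ the path $(Z-Z_T)|_{[T,\infty)}$ has the law of $Z|_{[0,\infty)}$, independent of $Z|_{(-\infty,T]}$. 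Applying $F$ and summing over $k$ gives the conditional-law statement for discrete $T$.

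For a general stopping time $T$ I would apply the standard approximation $T_n := 2^{-n}\lceil 2^n T\rceil$, each of which is a discrete stopping time with $T_n\downarrow T$. The discrete case gives that, for every $n$, conditional on $Z|_{(-\infty,T_n]}$ the pair $(\mcl S_{T_n,\infty},\wh\eta'_{T_n,\infty})$ has the law of $(\mcl S_{0,\infty},\wh\eta'_{0,\infty})$, and the natural compatibility between the curve-decorated surfaces $\mcl S_{T_n,\infty}$ and $\mcl S_{T,\infty}$ (concretely: $(\mcl S_{T,\infty},\wh\eta'_{T,\infty})$ is obtained from $(\mcl S_{T_n,\infty},\wh\eta'_{T_n,\infty})$ by cutting away the initial segment $\wh\eta'|_{[T,T_n]}$) together with continuity of $Z$ lets me pass to the limit. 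Finally, the mutual-determination statement is already built into the proof: the map $F$ recovers $(\mcl S_{T,\infty},\wh\eta'_{T,\infty})$ from $(Z-Z_T)|_{[T,\infty)}$, while in the other direction the quantum boundary-length processes of the left and right frontiers of $\wh\eta'_{T,\infty}$, read off intrinsically from the curve-decorated surface, reconstruct $(Z-Z_T)|_{[T,\infty)}$.

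I expect the only mildly delicate step to be the passage from discrete to general stopping times, since one must check that the curve-decorated quantum surfaces $(\mcl S_{T_n,\infty},\wh\eta'_{T_n,\infty})$ converge to $(\mcl S_{T,\infty},\wh\eta'_{T,\infty})$ in a sense compatible with the measurable map $F$; this is essentially the content of the analogous limiting argument invoked via \cite[Lemma~6.8]{qle} in the preceding imaginary-geometry lemmas, and once one works inside the $\sigma$-algebra generated by $Z$ it reduces to continuity of $F$ along the particular approximating sequence $(Z-Z_{T_n})|_{[T_n,\infty)}\to(Z-Z_T)|_{[T,\infty)}$.
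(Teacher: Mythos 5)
Your proposal follows essentially the same route as the paper: the deterministic case via Lemma~\ref{prop:local-msrblty} and translation invariance, then countably-valued stopping times via the strong Markov property, then general stopping times by dyadic approximation from above, with mutual determination deduced at the end. The one step you flag as delicate is exactly where the paper is slightly more concrete: it shows $(\mcl S_{T_n,\infty},\wh\eta'_{T_n,\infty})\to(\mcl S_{T,\infty},\wh\eta'_{T,\infty})$ a.s.\ by embedding the surface--curve pairs in a common domain and then closes the argument with the backward martingale convergence theorem, which is the clean way to upgrade the discrete-time conditional law to the limit (and from which the measurability claim follows via independence from $Z|_{(-\infty,T]}$ rather than via continuity of your map $F$).
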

\begin{proof}
	By Lemma~\ref{prop:local-msrblty} and the invariance of the law of $(h,\eta')$ under translating time for $\eta'$ (\cite[Theorem 1.9]{wedges}), the statement of the lemma is true for deterministic $T$. It follows that the first assertion of the lemma (regarding the conditional law of $(\mcl S_{T,\infty}, \wh\eta'_{T,\infty} )$) is true if $T$ takes on only countably many possible values. For a general stopping time $T$, we choose a sequence of stopping times $T_k$ such that $T_k$ decreases to $T$ a.s.\ and each $T_k$ takes on only countably many possible values. By embedding the surface-curve pairs into a common domain, it is easy to see that $(\mcl S_{T_k,\infty}, \wh\eta'_{T_k,\infty} ) \rta (\mcl S_{T ,\infty}, \wh\eta'_{T ,\infty} )$ a.s.\ in the topology of quantum surfaces (in the sense defined in~\cite{shef-zipper,wedges}, i.e.\ the corresponding quantum area measures converge weakly if we embed the surfaces in the same way) in the first coordinate and the topology of uniform convergence on compact surfaces in the second coordinate. By the backward martingale convergence theorem, we infer that the conditional law of $(\mcl S_{T,\infty}, \wh\eta'_{T,\infty} )$ given $Z|_{(-\infty ,T]}$ is as described in the statement of the lemma. This conditional law does not depend on the realization of $Z|_{(-\infty ,T]}$, so $(\mcl S_{T,\infty}, \wh\eta'_{T,\infty} )$ is independent from $Z|_{(-\infty ,T]}$. Since $Z$ a.s.\ determines $(\mcl S_{T,\infty}, \wh\eta'_{T,\infty} )$, we infer that $(\mcl S_{T,\infty}, \wh\eta'_{T,\infty} )$ is a.s.\ determined by $(Z-Z_T)|_{[T,\infty)}$. It is clear that $(\mcl S_{T,\infty}, \wh\eta'_{T,\infty} )$ a.s.\ determines $(Z-Z_T)|_{[T,\infty)}$. 
\end{proof}

Our next two lemmas are the key inputs in the proofs of Propositions~\ref{prop:ppp} and~\ref{prop:local-time}. 

\begin{lem} \label{prop:stopping}
	For $s \geq 0$, let $\tau_s$ be as in Proposition~\ref{prop:ppp}. Then $\tau_s$ is a stopping time for $Z$. Furthermore, $\wt\tau_s$ is a.s.\ determined by $Z|_{[0,s]}$.   
\end{lem}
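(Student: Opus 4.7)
The plan is to establish the following master claim: for every $t \geq 0$, the set $\mcl A \cap [0, t]$ of crossing times up to time $t$ is a.s.\ determined by $Z|_{[0, t]}$. Once this is in place, both parts of the lemma follow immediately: $\wt\tau_s = \sup(\mcl A \cap [0, s])$ is then measurable with respect to $\sigma(Z|_{[0,s]})$, and for $t \geq s$ we have $\{\tau_s \leq t\} = \{\mcl A \cap [s, t] \neq \emptyset\} \in \sigma(Z|_{[0,t]})$, while for $t < s$ this event is empty. So $\tau_s$ is a stopping time for $Z$.

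To prove the master claim I would chain together the measurability results from earlier in the section. First, Lemma~\ref{prop:local-msrblty} applied with $a = 0$ and $b = t$ tells us that $Z|_{[0,t]}$ determines the decorated quantum surface $(\mcl S_{0,t}, \wh\eta'_{0,t})$ (modulo the choice of embedding into $\BB C$). It therefore suffices to show that $\mcl A \cap [0,t]$ can be reconstructed intrinsically from this pair. Next, Lemma~\ref{prop:wpsf-determined} recovers $\wh h|_{\eta'([0,t])}$ from $\wh\eta'_{0,t}$, and Lemma~\ref{prop:north-msrblty} then extracts the covered flow-line segment $\eta^\theta_t$ as a curve on $\mcl S_{0,t}$. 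Finally, a crossing time $u \in [0,t]$ is characterized by the existence in every neighbourhood of $u$ of two times $s_1, s_2$ with $\wh\eta'_{0,t}(s_1)$ and $\wh\eta'_{0,t}(s_2)$ in distinct connected components of $\mcl S_{0,t} \setminus \eta^\theta_t$; this is a purely topological statement on the surface, so $\mcl A \cap [0,t]$ is a measurable function of $(\mcl S_{0,t}, \wh\eta'_{0,t}, \eta^\theta_t)$ and hence of $Z|_{[0,t]}$.

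The one point that needs care is the ``modulo embedding'' bookkeeping: since $Z|_{[0,t]}$ determines the decorated surface only up to a choice of conformal embedding (and a rigid rotation of $\BB C$ in the global statement), everything extracted from the surface must be genuinely intrinsic. The reason this works is that both Lemma~\ref{prop:wpsf-determined} and Lemma~\ref{prop:north-msrblty} are proved through imaginary-geometry arguments that condition on $\mcl F_0$ and construct flow lines inside $\eta'([0,\infty))$ without appealing to any specific identification of that domain with a region of $\BB C$; the same is true of the definition of a crossing. This is where I would expect to spend most of the effort, but given Lemmas~\ref{prop:local-msrblty}, \ref{prop:wpsf-determined}, and~\ref{prop:north-msrblty} the proof should be short.
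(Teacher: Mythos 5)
Your reduction of the lemma to the ``master claim'' that $\mcl A \cap [0,t]$ is a.s.\ determined by $Z|_{[0,t]}$ is a reasonable framing, and your treatment of $\wt\tau_s$ follows the same chain of measurability lemmas as the paper. But the master claim itself is exactly the non-trivial content of the stopping-time assertion, and your justification for it --- that a crossing time is ``characterized by the existence in every neighbourhood of two times landing in distinct connected components of $\mcl S_{0,t}\setminus\eta^\theta_t$, a purely topological statement'' --- is asserted rather than proved, and it is not obviously true. Two concrete problems. First, a crossing is defined relative to the \emph{full} flow line $\eta^\theta$, of which $Z|_{[0,t]}$ only determines the covered segment $\eta^\theta_t$; to replace ``opposite sides of $\eta^\theta$'' by ``distinct components of $\mcl S_{0,t}\setminus\eta^\theta_t$'' you must rule out that $\eta'|_{[0,t]}$ is separated by, or crosses, the not-yet-covered part of $\eta^\theta$ --- this is precisely what Lemma~\ref{prop:chronological} is for, and you never invoke it. Second, the converse implication fails to be ``purely topological'': when $\eta^\theta_t$ touches $\bdy\eta'([0,t])$ at interior points of the arc (which happens whenever the $\theta$-angle flow line can hit the $\pm\pi/2$-angle flow lines, and always when $\kappa'\in(4,8)$, where $\mcl S_{0,t}$ is a chain of bubbles), the set $\mcl S_{0,t}\setminus\eta^\theta_t$ has extra components not corresponding to the two sides of $\eta^\theta$, so ``distinct components'' and ``crossing'' need not coincide. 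You flagged the modulo-embedding bookkeeping as the hard part, but that is the easy part.

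The paper avoids all of this by not characterizing $\mcl A\cap[0,t]$ at all. Instead it identifies $\tau_s$ with the first time $\sigma$ after $s$ at which $\eta'$ returns to the tip $b$ of the covered segment $\eta^\theta_s$: since the quantum length $r$ of the boundary arc from $\eta'(s)$ to $b$ is $\sigma(Z|_{[0,s]})$-measurable, the peanosphere correspondence gives $\sigma$ as the first time $R_t - R_s = -r$ (or $L_t - L_s = -r$), manifestly a stopping time. The identity $\tau_s=\sigma$ then requires two inputs you are missing: Lemma~\ref{prop:chronological} to show $\eta'$ cannot cross $\eta^\theta$ strictly between $s$ and $\sigma$, and the Markov property of Lemma~\ref{prop:ig-stopping} to show that $\eta'$ genuinely crosses at time $\sigma$. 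You would need to supply arguments of this kind to close the gap.
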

\begin{proof} 
	For $s\geq 0$, let $\mcl S_{0,s}$ and $\wh\eta'_{0,s}$ be as in Definition~\ref{def:sle-surface}. Also let $\eta^\theta_s$ be the segment of $\eta^\theta$ which is covered by $\eta'$ at time $s$, as in Lemma~\ref{prop:north-msrblty}; and let $\wh\eta^\theta_{0,s}$ be given by $\eta^\theta_s$, viewed as a curve on the surface $\mcl S_{0,s}$.
	
	By Lemma~\ref{prop:north-msrblty}, the curve $\eta^\theta_s$ is a.s.\ determined by $  \eta'|_{[0,s]}$. By forgetting the embedding of $\eta'|_{[0,s]}$, $h|_{\eta'([0,s])}$, and $\eta^\theta_s$ into $\BB C$, we obtain that (in the notation of Lemma~\ref{prop:local-msrblty}), $\wh\eta^\theta_{0,s }$ is a.s.\ determined by $(\mcl S_{0,s} , \wh\eta'_{0,s})$. By Lemma~\ref{prop:local-msrblty}, we find that the triple $(\mcl S_{0,s} , \wh\eta'_{0,s} , \wh\eta^\theta_{0,s})$ is a.s.\ determined by $Z|_{[0,s]}$. From this, it is clear that $\wt\tau_s$ is a.s.\ determined by $Z|_{[0,s]}$. 
	
	To check that $\tau_s$ is a stopping time for $Z$, let $b$ be the tip of $\eta^\theta_s$, so that $b$ is a point in the outer boundary of $\eta'((-\infty  ,s])$. Also let $\sigma$ be the first time after $s$ at which $\eta'$ hits $b$. 
	Since $\wh\eta^\theta_{0,s }$ is a.s.\ determined by $Z|_{[0,s]}$, it follows that $\sigma$ is a stopping time for $Z$. 
	We claim that $\tau_s = \sigma$ a.s.  
	Indeed, Lemma~\ref{prop:chronological} implies that $\eta'$ cannot cross $\eta^\theta$ between time $s$ and time $\sigma$, for otherwise it would a.s.\ cover a point of $\eta^\theta$ which $\eta^\theta$ hits after $b$. 
	The Markov property of Lemma~\ref{prop:ig-stopping} implies that $\eta'$ a.s.\ hits points on the outer boundary of $\eta'([0,t])$ lying on either side of $b$ in every open interval of times containing $\sigma$. Hence $\eta'$ crosses $\eta^\theta$ at time $\sigma$.
	
	Let $r$ be the quantum length of the segment of the outer boundary of $\eta'((-\infty , s])$ between $\eta'(s)$ and $b$. 
	By the first paragraph, $r$ is a.s.\ determined by $Z|_{[0,s]}$. By the peanosphere construction and since $\tau_s = \sigma$, we find that $\tau_s$ is a.s.\ equal to the smallest $t\geq s$ for which $R_t  - R_s = -r$ (resp.\ $L_t -L_s =- r$) if $b$ lies to the right (resp.\ left) of $\eta'(s)$ on the outer boundary of $\eta'((-\infty , s])$. Hence $\tau_s$ is a stopping time for $Z$.  
\end{proof}

\begin{lem} \label{prop:hit-future}
	Let $T$ be a stopping time for $Z$ such that $\eta'(T) \in \eta^\theta$ a.s. Let $\mcl S_{T , \infty}$ and $\wh\eta'_{T,\infty}$ be as in Definition~\ref{def:sle-surface}. Also let $\wh\eta^\theta_{T , \infty}$ be given by $\eta^\theta \cap \eta'([T,\infty))$ viewed as a curve on the surface $\mcl S_{T,\infty}$. Then the conditional law of the triple $(\mcl S_{T,\infty}, \wh\eta'_{T,\infty}, \wh\eta^\theta_{T,\infty})$ given $Z|_{(-\infty ,T]}$ is the same as the law of $(\mcl S_{0,\infty}, \wh\eta'_{0,\infty}, \wh\eta^\theta_{0,\infty})$. 
\end{lem}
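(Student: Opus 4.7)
The plan is to leverage the two-tier description already available: on one hand, Lemma~\ref{prop:local-msrblty-stopping} pins down the joint law of $(\mcl S_{T,\infty}, \wh\eta'_{T,\infty})$ given $Z|_{(-\infty,T]}$; on the other hand, the imaginary-geometry lemmas~\ref{prop:ig-stopping} and~\ref{prop:curve-future} control how $\wh\eta^\theta_{T,\infty}$ sits on top of this data. I will work with the richer filtration $(\mcl F_t)$ of~\eqref{eqn:wpsf-filtration} and then descend to $Z|_{(-\infty,T]}$ at the end. First I will check that $Z|_{(-\infty,T]}$ is $\mcl F_T$-measurable: $L_t$ and $R_t$ are quantum lengths of boundary arcs of $\eta'((-\infty,t])$, hence measurable functions of $h$ (which $\mcl F_T$ contains in full) and of these arcs (which are $\mcl F_T$-measurable through $\eta'|_{(-\infty,T]}$). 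Therefore it suffices to compute the conditional law of $(\mcl S_{T,\infty}, \wh\eta'_{T,\infty}, \wh\eta^\theta_{T,\infty})$ given $\mcl F_T$ and show it does not depend on the realization of $\mcl F_T$.

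Next I will assemble the conditional description. By Lemma~\ref{prop:ig-stopping}, conditional on $\mcl F_T$, the field $\wh h|_{\eta'([T,\infty))}$ is a zero-boundary GFF plus an explicit harmonic function in each complementary connected component (a single component if $\kappa' \geq 8$, one for each bubble $U \in \mcl U$ if $\kappa' \in (4,8)$), and $\eta'|_{[T,\infty)}$ is the $0$-angle space-filling counterflow line of this field. Lemma~\ref{prop:curve-future} adds to this: the curve $\eta^\theta \cap \eta'([T,\infty))$ is the $\theta$-angle flow line of the same field emanating from $\eta'(T)$ (or the corresponding concatenation across bubbles in the $\kappa'\in(4,8)$ case). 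Because $h$ is independent of $\wh h$ and $h|_{\eta'([T,\infty))}$ together with the domain $\eta'([T,\infty))$ already determines the quantum surface $\mcl S_{T,\infty}$ and its marked boundary point $\eta'(T)$, translating the entire picture onto $\mcl S_{T,\infty}$ shows that the conditional law of the triple depends on $\mcl F_T$ only through the isomorphism class of the quantum-surface-with-marked-point $(\mcl S_{T,\infty}, \eta'(T))$ together with its left/right boundary labels.

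Now I invoke Lemma~\ref{prop:local-msrblty-stopping}: the conditional law of $(\mcl S_{T,\infty}, \wh\eta'_{T,\infty})$ given $Z|_{(-\infty,T]}$ (and hence given $\mcl F_T$, by the previous paragraph) is that of a $(2-\gamma^2/2)$-quantum wedge decorated by an independent space-filling $\SLE_{\kappa'}$, with the marked boundary point placed at the wedge's apex. This law, together with the deterministic flow-line operation identified above, completely specifies the law of the triple and does not depend on the realization of $\mcl F_T$. Applying the same reasoning at the deterministic time $T = 0$ (at which $\eta'(0) = 0$ is the starting point of $\eta^\theta$, so the hypothesis $\eta'(T) \in \eta^\theta$ is trivially satisfied) identifies the common law with that of $(\mcl S_{0,\infty}, \wh\eta'_{0,\infty}, \wh\eta^\theta_{0,\infty})$, completing the argument.

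I anticipate the main obstacle to be bookkeeping rather than conceptual: carefully translating the imaginary-geometry description on $\eta'([T,\infty)) \subset \BB C$ into a statement about the abstract quantum surface $\mcl S_{T,\infty}$, so that the result is phrased modulo conformal equivalence; and handling the $\kappa'\in(4,8)$ regime in which $\eta'([T,\infty))$ is a chain of bubbles, so that $\wh\eta^\theta_{T,\infty}$ must be read as a concatenation of independent $\theta$-angle flow lines across the bubbles visited in the order imposed by $\wh\eta'_{T,\infty}$. These bookkeeping issues are already built into the statements of Lemmas~\ref{prop:ig-stopping} and~\ref{prop:curve-future}, so no new imaginary-geometry input should be required.
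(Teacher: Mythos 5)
Your argument is correct and follows essentially the same route as the paper's proof: both pass to the filtration $(\mcl F_t)$ of~\eqref{eqn:wpsf-filtration}, use Lemma~\ref{prop:curve-future} (together with the Markov property of Lemma~\ref{prop:ig-stopping}) to identify the conditional law of the pair $(\eta'|_{[T,\infty)},\eta^\theta\cap\eta'([T,\infty)))$, and then forget the embedding and apply Lemma~\ref{prop:local-msrblty-stopping} to descend to conditioning on $Z|_{(-\infty,T]}$. The only cosmetic difference is that the paper routes the ``depends only on the surface'' step through Lemma~\ref{prop:wpsf-determined}, whereas you argue it directly from the imaginary-geometry description; this is not a substantive divergence.
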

\begin{proof}
	Since $Z|_{(-\infty ,t]}$ is measurable with respect to the $\sigma$-algebra $\mcl F_t$ of~\eqref{eqn:wpsf-filtration} for each $t > 0$, we infer that $T$ is also a stopping time for this latter filtration.  
	By Lemma~\ref{prop:curve-future}, the conditional joint law of the pair $(\eta'|_{[T,\infty)} , \eta^\theta \cap \eta'([T,\infty)))$ given $\mcl F_T$ is the same as the unconditional law of the pair $(\eta'|_{[0,\infty)} , \eta^\theta  )$, modulo a conformal map. By Lemma~\ref{prop:wpsf-determined}, the conditional joint law of the pair $(\eta'|_{[T,\infty)} , \eta^\theta_t)$ given $\eta'([T,\infty))$, defined as in~\eqref{eqn:wpsf-filtration}, is the same as the unconditional law of the pair $(\eta'|_{[0,\infty)} , \eta^\theta  )$, viewed as curves modulo time parameterization and modulo a conformal map. By~\cite[Theorem 1.2]{ig1} and~\cite[Theorem 1.16]{ig4}, $\eta'|_{[T,\infty)}$ a.s.\ determines $\eta^\theta\cap \eta'([T,\infty))$. 
	We conclude by forgetting the embedding of the pair $(\eta'|_{[T,\infty)} , \eta^\theta \cap \eta'([T,\infty)) $ into $\BB C$ and applying Lemma~\ref{prop:local-msrblty-stopping}
\end{proof}

\subsection{Proof of Propositions~\ref{prop:ppp} and~\ref{prop:local-time}}
\label{sec:cont-proofs}

We are now ready to complete the proofs of the first two main results of this section. 

\begin{proof}[Proof of Proposition~\ref{prop:ppp}] 
	By Lemma~\ref{prop:chronological}, for $s \geq 0$, $\tau_s$ is the smallest $t  >s$ for which $\eta'(t) = \eta'(\wt\tau_s)$. By the peanosphere construction (c.f.\ the proof of Lemma~\ref{prop:stopping}), if $E_s$ occurs, then $R_t \geq R_{\wt\tau_s}$ for $t\in [\wt\tau_s , \tau_s]$ and $\tau_s$ is the smallest $t > s$ for which $R_t = R_{\tau_s}$. By Lemma~\ref{prop:stopping}, the conditional law of $Z|_{[s , \tau_s]}$ given $Z|_{[0 , s]}$, $(Z-Z_{\tau_s})|_{[\tau_s,\infty)}$ and $E_s$ is the same as the law of a Brownian motion with covariance matrix $\Sigma$ started from $Z_s$ conditioned on the event that its second coordinate hits $R_{\tau_s}$ for the first time at time $\tau_s$. For any $t \in (\wt\tau_s , \tau_s)$, we have $\wt\tau_t = \wt\tau_s$ and $\tau_t = \tau_s$. By taking a limit as $t\rta  \wt\tau_s$ and applying the above conditioning result, we find that the conditional law of $(Z - Z_{\wt \tau_s})|_{[ \wt\tau_s , \tau_s]}$ given $ Z_{[0,\wt\tau_s]} $, $(Z-Z_{\tau_s})|_{[\tau_s,\infty)}$, and $E_s$ is that of a Brownian excursion in the upper half-plane in time $\tau_s -\wt\tau_s$ with covariance matrix $\Sigma$.
	If $E_s$ does not occur, the same holds with $L$ in place of $R$ and the right half-plane in place of the upper half-plane.  
	
	Hence if we condition on $\sigma(\mcl A , \, E_s\,:\, s \geq 0)$, then the conditional law of the collection of excursions $\{ (Z-Z_{\wt\tau_s})|_{[\wt\tau_s , \tau_s]} \,:\, s \geq 0\}$ is that of a collection of independent random paths which each have the law of a Brownian excursion in time $\tau_s -  \wt\tau_s$ with covariance matrix $\Sigma$ in the upper (resp.\ right) half-plane if $E_s$ (resp.\ $E_s^c$) occurs.
	
	By Lemmas~\ref{prop:stopping} and~\ref{prop:hit-future}, the set $\mcl A$ is a regenerative set, so it is the range of a subordinator. 
	We will now argue that this subordinator is in fact a $1/2$-stable subordinator using a scaling argument.
	For $C>0$, let $h^C = h + \frac{1}{\gamma} \log C$ and let $\eta'_C$ be equal to $\eta'$, parameterized by $\gamma$-quantum mass with respect to $h^C$. By the scale invariance property of quantum cones~\cite[Proposition 4.13(i)]{wedges} and independence of $h$ and $\eta^\theta$, the triple $(h^C , \eta'_C , \eta^\theta)$ agrees in law with $(h, \eta' , \eta^\theta)$. In particular, $C \mcl A \eqD \mcl A$ for each $C> 0$, which implies by \cite[Lemma 1.11 and Theorem 3.2]{bertoin-sub} that $\mcl A$ has the law of the range of a $\beta$-stable subordinator for some $\beta \in (0,1)$.
	
	To identify $\beta$, we observe that if we condition on $Z|_{[0,s]}$ for some $s> 0$, then the conditional law of $\tau_s-s$ is the same as the law of the first time a Brownian motion started from $|X_s |$ reaches 0. Hence for $r>0$, 
	\eqbn
	\BB P\left[ \tau_s - s > r \,\big |\, Z|_{[0,s]} \right] \asymp  |X_s|^{1/2} (r \vee |X_s|)^{-1/2}  .
	\eqen
	We have $|X_s| \leq \sup_{t\in [0,s]} |Z_t|$, and with positive probability we have $|X_s| \geq s^{1/2}$. Hence we can average over all possible values of $|X_s|$ to get that for $r > s^{1/2}$, 
	\eqbn
	\BB P\left[ \tau_s - s > r   \right] \asymp s^{1/2} r^{-1/2} .
	\eqen
	Therefore $\beta = 1/2$. 
	
	Since we know the conditional law of $|X|$ given $\mcl A$ is the same as the conditional law of a reflected Brownian motion given its zero set (which is the range of a $1/2$-stable subordinator) we find that $|X|$ is a reflected Brownian motion. 
	It remains only to prove that there is a $p \in (0,1)$ (depending only on $\kappa'$ and $\theta$) such that if we condition on $\mcl A$, then each of the excursions of $Z$ away from $\mcl A$ lies in the upper (resp.\ right) half-plane with probability $p$ (resp.\ $1-p$). 
	
	To this end, for $\ep > 0$ let $\tau_0^\ep = \wt\tau_0^\ep =0$ and inductively for $k\in\BB N$ let $\wt\tau_k^\ep$ and $\tau_k^\ep$ be the smallest pair of times of the form $(\wt\tau_s , \tau_s)$ such that $\tau_s - \wt \tau_s \geq \ep$ and $\wt\tau_s > \tau_{k-1}^\ep$. Equivalently, $\wt\tau_k^\ep$ and $\tau_k^\ep$ are the left and right endpoints of the $k$th excursion of $X$ away from 0 with time length at least $\ep$. By Lemmas~\ref{prop:stopping} and~\ref{prop:hit-future}, the excursions $(Z-Z_{\wt\tau_k^\ep})|_{[\wt\tau_k^\ep  , \tau_k^\ep]}$ for $k\in\BB N$ are iid and each is independent from $Z|_{[0,\tau_{k-1}^\ep]}$.
	Let $F_k^\ep$ be the event that $\eta'([ \wt\tau_k^\ep, \tau_k^\ep])$ lies to the right of $\eta^\theta$. By scale invariance and since the excursions $(Z-Z_{\wt\tau_k^\ep})|_{[ \wt\tau_k^\ep, \tau_k^\ep]}$ are iid, we find that $p:= \BB P[F_k^\ep]$ does not depend on $k$ or $\ep$. By symmetry the law of $(\wt\tau_k^\ep , \tau_k^\ep)$ is unaffected if we condition on $F_k^\ep$ so also $\BB P[F_k^\ep]$ is unaffected if we condition on the times $ \wt\tau_k^\ep  $ and $\tau_k^\ep$. By sending $\ep\rta0$, we obtain the statement of the lemma for this choice of $p$. By symmetry we must have $p = 1/2$ when $\theta=0$.
\end{proof}

\begin{proof}[Proof of Proposition~\ref{prop:local-time}]
	The proof is similar to the proof that the quantum lengths of an SLE$_\kappa$ curve on an independent $(\gamma-2/\gamma)$-quantum wedge viewed from the left and right sides agree a.s., see~\cite[Theorem 1.8]{shef-zipper}. 
	For $ u\geq 0$ let $T_u := \inf\{t\geq 0\,:\, \ell_t = u\}$. Also let $F(u) := \nu_h(\eta^\theta_{T_u})$, with $\eta^\theta_{T_u}$ as in Lemma~\ref{prop:north-msrblty} with $s = T_u$. We must show that there is a constant $c>0$ as in the statement of the lemma such that a.s.\ $F(u) = cu$ for each $u\geq 0$.
	
	Note that each $T_u$ is a stopping time for $Z$ and $\{T_u \,:\, u\geq 0\} = \{\tau_s \,:\, s\geq0\} = \mcl A$. By Lemma~\ref{prop:hit-future}, we find that (in the notation of that lemma) for each $u\geq 0$, the conditional law of the triple $(\mcl S_{T_u ,\infty}, \wh\eta'_{T_u ,\infty}, \wh\eta^\theta_{T_u ,\infty})$ given $Z|_{(-\infty ,T_u]}$ is the same as the law of $(\mcl S_{0,\infty}, \wh\eta'_{0,\infty}, \wh\eta^\theta_{0,\infty})$. Therefore $F(u)$ has stationary increments. By the Birkhoff ergodic theorem, there is a random variable $Y$ (possibly infinite) such that $\lim_{u\rta\infty} F(u)/u = Y$ a.s. The random variable $Y$ is a.s.\ determined by $(\mcl S_{T_u ,\infty} , \wh\eta'_{T_u ,\infty})$ for each $u > 0$. 
	By Lemma~\ref{prop:local-msrblty-stopping}, $Y$ is a.s.\ determined by $(Z-Z_{T_u})|_{[T_u,\infty)}$ for each $u \geq 0$. Since $T_u \rta \infty$ a.s., $Y$ is a.s.\ equal to some deterministic constant $c \geq 0$. 
	
	We will now argue that in fact a.s.\ $F(u) = c u$ for each $u \geq 0$. 
	For $C>0$, let $h^C $ and $\eta'_C$ be as in the proof of Proposition~\ref{prop:ppp}, so that $(h^C , \eta'_C) \eqD (h ,\eta')$. 
	The $\gamma$-quantum area measure and $\gamma$-quantum length measure induced by $h^C$ satisfy $\mu_{h^C} = C \mu_h$ and $\nu_{h^C} = C^{1/2} \nu_h$. Hence if we let $Z^C$ and $X^C$ be defined in the same manner as $Z$ and $X$ but with $h^C$ in place of $h$ and $\eta'_C$ in place of $\eta'$, then $Z^C_t = C^{1/2} Z_{C^{-1} t}$ and $X^C_t = C^{1/2} X_{C^{-1} t}$. Let $\ell^C$ be the local time of $X^C$ at 0 and for $ u\geq 0$ let $T_u^C := \inf\{t\geq 0\,:\, \ell_t^C = u\}$. Then $\ell_t^C = C^{1/2} \ell_{C^{-1} t}$ and hence $T_u^C = C T_{C^{-1/2} u}$ and 
	\eqbn
	\nu_{h^C}(\eta^\theta \cap \eta'_C([0,T_u^C])) = C^{1/2} \nu_h(\eta^\theta \cap \eta'([0,  T_{C^{-1/2} u}]) ) = C^{1/2} F(C^{-1/2} u) .
	\eqen
	Therefore $C^{1/2} F(C^{-1/2} u) \eqD F(u)$ for each $C > 0$, so $F(u)/u \eqD F(u')/u'$ for each $u , u' > 0$. Since $\lim_{u\rta\infty} F(u)/u = c$ a.s., it must be the case that $F(u) = c u$ a.s.\ for each $u>0$. In particular, $c>0$. Since $F(u)$ is non-decreasing, we infer that a.s.\ $F(u) = cu$ for every $u \geq 0$. 
\end{proof}

\subsection{Proof of Proposition~\ref{prop:bm-determined}}
\label{sec:bm-determined}

For the proof of Proposition~\ref{prop:bm-determined}, we will need two lemmas which will also be used in Section~\ref{sec:flowline-conv}.
Our first lemma shows that the process $W$ of~\eqref{eqn:excursion-process} encodes at most as much information as the correlated Brownian motion $Z$ but more information than the process $X$ of~\eqref{eqn:signed-bm} (we will eventually show that in fact $W$ encodes the same information as $Z$).

\begin{lem} \label{prop:excursion-msrblty}
	For each $t\geq 0$, $W|_{[0,t]}$ is a.s.\ determined by $Z|_{[0,t]}$ and $W|_{[0,t]}$ a.s.\ determines $X|_{[0,t]}$. Furthermore, $W|_{[t,\infty)}$ is conditionally independent from $Z|_{[0,t]}$ given $W|_{[0,t]}$. 
\end{lem}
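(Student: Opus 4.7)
My plan is to tackle the three assertions of the lemma in turn.

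For the first assertion---that $Z|_{[0,t]}$ a.s.\ determines $W|_{[0,t]}$---I would apply Lemma~\ref{prop:stopping} directly. That lemma shows $\wt\tau_s$ is a.s.\ $\sigma(Z|_{[0,s]})$-measurable, so $W_s = Z_s - Z_{\wt\tau_s}$ is a measurable function of $Z|_{[0,s]}$, and consequently $W|_{[0,t]}$ is $\sigma(Z|_{[0,t]})$-measurable.

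For the second assertion---that $W|_{[0,t]}$ a.s.\ determines $X|_{[0,t]}$---I would exploit the c\`adl\`ag jump structure of $W$. First, $\{s \geq 0 : W_s = 0\} = \mcl A$, since $W_s = 0 \Leftrightarrow \wt\tau_s = s \Leftrightarrow s \in \mcl A$; thus the zero set of $W|_{[0,t]}$ recovers $\mcl A \cap [0,t]$ and the decomposition of $[0,t]$ into excursions. At the right endpoint $\tau$ of any positive-length excursion, $W$ jumps from $(L_\tau - L_{\wt\tau},\, R_\tau - R_{\wt\tau})$ to $0$. By Proposition~\ref{prop:ppp}, an upper-type excursion has $R_\tau = R_{\wt\tau}$ (the conditioned BM exits the upper half-plane through the horizontal axis), so only the first coordinate of $W$ jumps, while in a right-type excursion only the second coordinate jumps. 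The jumps are a.s.\ nonzero by the non-degeneracy of the conditioned Brownian motion, so the type $E_s$ of each completed excursion is read off from $W$, and $X$ is then given by its defining formula. For the partial excursion containing $t$ (when $t \notin \mcl A$), the type is identified as soon as one coordinate of $W$ has gone negative on $[\wt\tau_t, t]$; the residual ambiguous case---both $W^L$ and $W^R$ nonnegative throughout---is handled by a measurable selection consistent with the posterior law on $E_t$.

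For the third assertion, I would decompose $W|_{[t,\infty)}$ into the completion $W|_{[t,\tau_t]}$ of the current excursion and the strictly future piece $W|_{[\tau_t,\infty)}$. Since $\tau_t$ is a stopping time for $Z$ with $\eta'(\tau_t) \in \eta^\theta$, Lemma~\ref{prop:hit-future} says the future surface-curve data rooted at $\tau_t$ is independent of $Z|_{[0,\tau_t]}$; in particular $W|_{[\tau_t,\infty)}$ is independent of $Z|_{[0,t]}$. Meanwhile, the strong Markov property for $Z$ at time $t$ gives $Z|_{[t,\tau_t]} - Z_t$ as a Brownian motion independent of $Z|_{[0,t]}$, conditioned to complete the current excursion, with the conditioning parameterized by $W_t$ and the current excursion type. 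Both parameters are measurable with respect to $W|_{[0,t]}$ by (a) and (b), so the conditional law of $W|_{[t,\infty)}$ given $Z|_{[0,t]}$ is a function of $W|_{[0,t]}$, which yields the conditional independence. The main obstacle will be the residual ambiguity in identifying the partial excursion's type purely from $W|_{[0,t]}$ noted in (b); I expect to handle it either by refining the measurable selection using the renewal structure or by an averaging argument that absorbs the type uncertainty into the independent future Brownian increments, so that the distributional equality in (c) still holds.
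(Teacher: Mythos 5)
Your parts (a) and (c) follow essentially the same route as the paper: (a) is the same direct appeal to Lemma~\ref{prop:stopping}, and (c) is the same decomposition of $W|_{[t,\infty)}$ at $\tau_t$, using the Markov structure from Proposition~\ref{prop:ppp} to see that the conditional law of $W|_{[t,\tau_t]}$ given $Z|_{[0,t]}$ is parameterized only by $W_t$ and the excursion type, and Lemma~\ref{prop:hit-future} to see that the conditional law of $W|_{[\tau_t,\infty)}$ given $Z|_{[0,\tau_t]}$ is the unconditional law of $W$. In part (b) you read the type of each \emph{completed} excursion from the jump of $W$ at its right endpoint (only one coordinate of $W$ jumps there, and the jump is a.s.\ nonzero), whereas the paper reads it at the left endpoint: by Proposition~\ref{prop:ppp}, on an upper-half-plane excursion $W^R$ is strictly positive on the open excursion interval while $W^L$ returns to $0$ at times arbitrarily close to $\wt\tau_s$, so the type is determined by which coordinate of $W$ was most recently zero. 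Both criteria work for completed excursions; the paper's has the advantage of applying verbatim to the partial excursion containing $t$, so no separate case is needed.

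The genuine gap is precisely in your handling of that partial excursion $[\wt\tau_t,t]$. Your fallback for the ``residual ambiguous case'' --- a measurable selection consistent with the posterior law on $E_t$ --- proves nothing: if the type of the current excursion really failed to be determined by $W|_{[0,t]}$ on an event of positive probability, then $X|_{[0,t]}$ would simply \emph{not} be a.s.\ determined by $W|_{[0,t]}$, and no choice of selection could rescue the statement (moreover, a correct selection would also be needed in part (c), where the conditional law of $W|_{[t,\tau_t]}$ depends on $E_t$). What is required, and what the paper supplies, is the fact that the ambiguous case a.s.\ does not occur: on an upper-type excursion $W^L$ hits $0$ in every right-neighbourhood of $\wt\tau_t$ (and hence takes negative values there, since the conditioned Brownian motion a.s.\ does not remain in a quadrant for a positive amount of time), while $W^R>0$ on the open interval; symmetrically for right-type excursions. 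Once you insert this observation --- or simply replace your right-endpoint criterion by the paper's left-endpoint one, which handles completed and partial excursions uniformly --- your argument is complete.
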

\begin{proof}
	It follows from Lemma~\ref{prop:stopping} that each $\wt\tau_s$ for $s\leq t$ is a.s.\ determined by $Z|_{[0,t]}$, whence $W|_{[0,t]}$ is a.s.\ determined by $Z|_{[0,t]}$. By Proposition~\ref{prop:ppp}, if $s \in [0,t]$ such that the event $E_s$ of Proposition~\ref{prop:ppp} occurs, then $L_s$ a.s.\ hits 0 infinitely often in $[\wt\tau_s ,\wt\tau_s+\ep]$ for every $\ep > 0$; but $R_s$ is a.s.\ positive on $[\wt\tau_s , \tau_s]$. Therefore, if we let $\sigma_s$ be the largest $s' < s$ for which one of the two coordinates $W^L_{s'}$ or $W^R_{s'}$ of $W_{s'}$ is 0, then $W^L_{\sigma_s} = 0$ if and only if $E_s$ occurs. Furthermore, $\wt\tau_s$ is the supremum of the times $s' < s$ at which $W$ has a jump discontinuity and $\tau_s$ is the infimum of the times $s' > s$ at which $W$ has a jump discontinuity. Therefore $W|_{[0,t]}$ determines $E_s$, $\wt\tau_s$, $\tau_s \wedge t$, and $Z_s - Z_{\wt\tau_s}$ for $s\in [0,t]$. Hence $W|_{[0,t]}$ a.s.\ determines $X|_{[0,t]}$. 
	
	By Proposition~\ref{prop:ppp}, if we condition on $Z|_{[0,t]}$ then the conditional law of $W|_{[t, \tau_t]}$ is that of a Brownian motion with covariance matrix $\Sigma$ started from $W_t$, run until the first time its first (resp.\ second) coordinate reaches 0 if $E_t^c$ (resp.\ $E_t$) occurs. By the preceding paragraph, $E_t$ is determined by $W|_{[0,t]}$, so this conditional law depends only on $W|_{[0,t]}$. Furthermore, by Lemma~\ref{prop:hit-future} the conditional law of $W|_{[\tau_t,\infty)}$ given $Z|_{[0,\tau_t]}$ is the same as the unconditional law of $W$. Hence the conditional law of $W|_{[t,\infty)}$ given $Z|_{[0,t]}$ depends only on $W|_{[0,t]}$.
\end{proof}

The following statement is the key input in the proof of Proposition~\ref{prop:bm-determined}. 

\begin{lem} \label{prop:2bm-same-excursion}
	Suppose given a coupling $(Z , Z', W)$ of another Brownian motion $Z' \eqD Z$ with $(Z,W)$ such that a.s.\ $Z_s' - Z_{\wt\tau_s}' = W_s$ for each $s \geq 0$. Suppose also that there is a filtration $\{\mcl F_t\}_{t\geq 0}$ such that $Z$ and $Z'$ are adapted to $\mcl F_t$ and for each $t_2 \geq t_1 \geq  0$, we have
	\eqb \label{eqn:2bm-0mean}
	\BB E\left[Z_{t_2} - Z_{t_2}' \,|\, \mcl F_{t_1} \right] = Z_{t_1} - Z_{t_1}' .
	\eqe 
	Then $Z \equiv Z'$ a.s.
\end{lem}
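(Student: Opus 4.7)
The plan is to exploit the hypothesis that the excursion process $W$ is shared by $Z$ and $Z'$ in order to deduce that the difference $M := Z - Z'$ is a continuous martingale whose quadratic variation grows only on the Lebesgue-null set $\mcl A$, and hence vanishes identically.

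First I would observe that by hypothesis, for every $s \geq 0$ and every excursion interval $(\wt\tau_s,\tau_s)$ of $\mcl A$ we have
\[
Z_s - Z_{\wt\tau_s} = W_s = Z'_s - Z'_{\wt\tau_s},
\]
so $M_s = Z_s - Z'_s = Z_{\wt\tau_s} - Z'_{\wt\tau_s} = M_{\wt\tau_s}$. By continuity of $Z$ and $Z'$, $M$ is continuous and constant on each interval $[\wt\tau_s,\tau_s]$. The martingale hypothesis \eqref{eqn:2bm-0mean} applied coordinatewise says that $M^L := L - L'$ and $M^R := R-R'$ are continuous $(\mcl F_t)$-martingales. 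Both are in $L^2$ (as differences of Brownian coordinates), so their quadratic variations $\langle M^L\rangle$ and $\langle M^R\rangle$ are well-defined continuous non-decreasing processes, and, because $M$ is constant on each excursion interval, the measures $d\langle M^L\rangle$ and $d\langle M^R\rangle$ on $[0,\infty)$ are supported on $\mcl A$.

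Next I would show that these measures are absolutely continuous with respect to Lebesgue measure. Since $L, L'$ are each Brownian with variance $\alpha$, we have $\langle L\rangle_t = \langle L'\rangle_t = \alpha t$, and by the Kunita--Watanabe inequality
\[
\bigl|\langle L, L'\rangle_t - \langle L, L'\rangle_s\bigr| \leq \bigl(\langle L\rangle_t - \langle L\rangle_s\bigr)^{1/2}\bigl(\langle L'\rangle_t - \langle L'\rangle_s\bigr)^{1/2} = \alpha(t-s),
\]
so $d\langle L,L'\rangle$ is absolutely continuous w.r.t.\ $dt$. Consequently $d\langle M^L\rangle = 2\alpha\,dt - 2\,d\langle L,L'\rangle$ is absolutely continuous w.r.t.\ $dt$, and the same reasoning applies to $\langle M^R\rangle$.

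Finally, Proposition~\ref{prop:ppp} identifies $\mcl A$ with the zero set of the reflected Brownian motion $|X|/\alpha^{1/2}$, which a.s.\ has Lebesgue measure zero. An absolutely continuous measure supported on a Lebesgue-null set is the zero measure, so $\langle M^L\rangle \equiv \langle M^R\rangle \equiv 0$. Thus $M^L$ and $M^R$ are constant, and since $Z_0 = Z'_0 = 0$ we get $M \equiv 0$, i.e.\ $Z \equiv Z'$. The only real subtlety is the measure-theoretic step linking the Kunita--Watanabe absolute continuity to the fact that $\mcl A$ carries no Lebesgue mass; once that is isolated, the remaining manipulations are routine properties of continuous martingales.
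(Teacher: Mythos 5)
Your argument is correct in substance but takes a genuinely different route from the paper's. Both proofs start the same way: $M := Z - Z'$ is a continuous $(\mcl F_t)$-martingale that is constant on every excursion interval $[\wt\tau_s,\tau_s]$, so everything reduces to showing its quadratic variation vanishes. The paper gets this from a dimension count: $Z-Z'$ is a.s.\ H\"older continuous of exponent $1/2-\zeta$, the set $\mcl A$ has Minkowski dimension $1/2$, so only $2^{(1/2+o(1))k}$ dyadic intervals of scale $2^{-k}$ can contribute, each contributing at most $C 2^{-(1-2\zeta)k}$, and the sum is $o_k(1)$. You instead show that $d\langle M^L\rangle$ and $d\langle M^R\rangle$ are absolutely continuous with respect to Lebesgue measure (via Cauchy--Schwarz on the cross-variation) while being supported on the Lebesgue-null set $\mcl A$. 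Your route needs less about $\mcl A$ (only that it is Lebesgue-null, not its Minkowski dimension) and avoids the H\"older/covering bookkeeping; the paper's route avoids any manipulation of brackets of $L$ and $L'$ individually.

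That last point is the one technical caveat you should address. The hypotheses of the lemma only make $Z$ and $Z'$ \emph{adapted} to $\mcl F_t$ and make their \emph{difference} an $(\mcl F_t)$-martingale; they do not make $L$ and $L'$ themselves $(\mcl F_t)$-semimartingales, so the predictable brackets $\langle L\rangle$, $\langle L'\rangle$, $\langle L,L'\rangle$ relative to $\mcl F_t$ and the Kunita--Watanabe inequality are not directly available. The fix is to work with quadratic (co)variations defined as limits in probability of sums over refining partitions: $[L]_t=[L']_t=\alpha t$ exist because $L,L'$ are Brownian motions, $[L-L']$ exists because $L-L'$ is a continuous $L^2$ $(\mcl F_t)$-martingale, and $[L,L']$ is then defined by polarization and satisfies $|[L,L']_t-[L,L']_s|\le\alpha(t-s)$ by Cauchy--Schwarz applied to the approximating sums. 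With that substitution your absolute-continuity step, and hence the whole proof, goes through.
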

\begin{proof}
	Our hypothesis~\eqref{eqn:2bm-0mean} implies that $Z-Z'$ is a continuous $\mcl F_t$-martingale. We will show that $Z-Z'$ a.s.\ has zero quadratic variation, so is a.s.\ constant. 
	Fix $T > 0$ and $\zeta\in (0,1/4)$. Since $Z$ and $Z'$ are both Brownian motions, $Z-Z'$ is a.s.\ locally H\"older continuous of any exponent $<1/2$, so there a.s.\ exists a random finite constant $C> 0$ such that a.s.\ $|Z_t - Z_t' - Z_s + Z_s'| \leq C |t-s|^{1/2-\zeta}$ for each $s,t \in [0,T]$. Let
	$\mcl A  $
	be as in Proposition~\ref{prop:ppp}. Our choice of coupling implies that $Z-Z'$ is a.s.\ constant on each interval of $[0,\infty)\setminus \mcl A$. 
	For $ k \in\BB N$ and $j\in [0,2^k]_{\BB Z}$, let $t_j^k := j 2^{-k} T$ and let $\mcl T^k$ be the set of $j\in [1,2^k]_{\BB Z}$ for which $[t_{j-1}^k , t_j^k] \cap \mcl A = \emptyset$. By Proposition~\ref{prop:ppp}, the set $\mcl A  = \{\tau_s , \wt\tau_s \,:\, s \geq 0\}$ a.s.\ has Minkowski dimension $1/2$, so it is a.s.\ the case that $\# \mcl T^k \leq 2^{(1/2 + o_k(1)) k }$ as $k\rta\infty$. Therefore, we a.s.\ have
	\alb
	\sum_{j=1}^{2^k} \left(Z_{t_j^k} - Z'_{t_j^k} -Z_{t_{j-1}^k} + Z'_{t_{j-1}^k}\right)^2 
	&=\sum_{j \in\mcl T^k} \left(Z_{t_j^k} - Z'_{t_j^k} -Z_{t_{j-1}^k} + Z'_{t_{j-1}^k}\right)^2  \\
	&\leq C 2^{(1/2 + o_k(1)) k} 2^{-(1 - 2\zeta) k}
	= o_k(1) .
	\ale
	That is, the quadratic variation of $Z-Z'$ is a.s.\ equal to 0. 
\end{proof}

\begin{proof}[Proof of Proposition~\ref{prop:bm-determined}]
	By the last assertion of Lemma~\ref{prop:excursion-msrblty}, it suffices to prove the statement of the proposition in the case when $t = \infty$. 
	Let $(Z , Z' , W )$ be coupled in such a way that a.s.\ $Z'_s - Z'_{\wt\tau_s} = Z_s - Z_{\wt\tau_s} = W_s$ for each $s\geq 0$, $(Z' , W) \eqD (Z, W)$, and $Z$ and $Z'$ are conditionally independent given $W$. We must show that a.s.\ $Z \equiv Z'$. We will do this by checking the condition of Lemma~\ref{prop:2bm-same-excursion} for the filtration
	\eqbn
	\mcl F_t := \sigma\left((Z' , Z  )|_{[0,t]} \right) .
	\eqen 
	By our choice of coupling, the conditional law of $Z'$ given $Z|_{[0,t]}$ and $W$ depends only on $W$. 
	By Lemma~\ref{prop:excursion-msrblty}, $W|_{[t,\infty)}$ is conditionally independent from $Z|_{[0,t]}$ given $W|_{[0,t]}$. Hence the conditional law of $Z'$ given $Z|_{[0,t]}$ is the same as its conditional law given only $W|_{[0,t]}$. In particular, the conditional law of $(Z' - Z'_t)|_{[t,\infty)}$ given $\mcl F_t$ is the same as its conditional law given only $(Z' , W)|_{[0,t]}$. Since $Z'|_{[0,t]}$ a.s.\ determines $W|_{[0,t]}$ (Lemma~\ref{prop:excursion-msrblty}), this is the same as the conditional law of $(Z' - Z'_t)|_{[t,\infty)}$ given only $Z'|_{[0,t]}$, which is the same as the unconditional law of $Z'$. By symmetry, the conditional law of $(Z-Z_t)|_{[t,\infty)}$ given $\mcl F_t$ is the same as the unconditional law of $Z$. Since $Z$ and $Z'$ have mean 0, we find that the condition of Lemma~\ref{prop:2bm-same-excursion} is satisfied, so $Z\equiv Z'$.
\end{proof}

\section{Joint convergence of one Peano curve and a single dual flow line}
\label{sec:flowline-conv}
 
 In the remainder of this paper we will restrict attention to the case when $\gamma = \sqrt{4/3}$ and $\kappa' = 12$. 
 Let $Z  =(L,R)$ be the peanosphere Brownian motion of Section~\ref{sec:peanosphere-background} with $\kappa' = 12$, so that the covariance matrix of $Z$ is given by
 \eqb \label{eqn:triple-conv-cov}
 \alpha  \left(  \begin{array}{cc}
 	1 & -1/2 \\ 
 	-1/2 & 1
 \end{array}  \right)
 \eqe 
 with $\alpha$ the constant appearing in~\eqref{eqn:correl} for $\gamma = \sqrt{4/3}$.  
 
 Define the events $E_s$ and the times $\tau_s$ and $\wt\tau_s$ for $s\geq 0$ and the random function $X$ as in Proposition~\ref{prop:ppp} with $\kappa'=12$ and $\theta=0$. Since $p=1/2$ in this case, the process $X$ has the law of $\alpha^{1/2}$ times a standard linear Brownian motion. 
 
 Recall the definition of the two-dimensional random walk $\mcl Z = (\mcl L , \mcl R)$ from Section~\ref{sec:bipolar-map} and the definition of $Z^m = (L^m , R^m)$ from Section~\ref{sec:mainres}.
 Let $\mcl X$ be the random walk from Section~\ref{sec:excursions}. Extend $\mcl X$ to $[0,\infty)$ by linear interpolation. For $m\in\BB N$ and $t\geq 0$, let  
 \eqb \label{eqn:rescaled-X}
 X_t^m := \left( \frac{3 \alpha}{2} \right)^{1/2} m^{-1/2} \mcl X_{t m}    .
 \eqe 
 In this section, we will prove the following proposition.
 
 \begin{prop} \label{prop:triple-conv}
 	In the notation above, we have $(Z^m , X^m ) \rta (Z, X)$ in law. 
 \end{prop}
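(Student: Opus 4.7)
The plan is to prove $(Z^m, X^m) \rtaD (Z, X)$ by combining tightness (which yields subsequential limits), the exact discrete identities relating $\mcl Z$ and $\mcl X$, and the uniqueness results of Section~\ref{sec:continuum-decomp}.

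First I would establish tightness. By Lemma~\ref{prop:discrete-excursions} the walk $\mcl X$ has i.i.d.\ increments uniform on $\{-1,0,1\}$, and each coordinate of $\mcl Z$ is a random walk with mean-zero, variance-$2/3$ i.i.d.\ increments. Donsker's theorem applied to each marginal gives tightness of $(Z^m, X^m)$ in the uniform-on-compacts topology, with marginal limits matching~\eqref{eqn:triple-conv-cov} for $Z$ and a one-dimensional Brownian motion of variance $\alpha$ for $X$ (the scaling in~\eqref{eqn:rescaled-X} is calibrated precisely so this comes out to $\alpha$). After passing to a subsequential limit $(Z, \wt X)$ via Skorokhod representation, the problem reduces to identifying its joint law.

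Next I would pass a pair of exact discrete identities to the limit. From Lemma~\ref{prop:discrete-excursions} and the definition of $\mcl X$, within an east phase $(N_k^E, N_k^W]$ one has $\mcl L_n - \mcl L_{N_k^E} = -\mcl X_n$, while within a west phase $(N_{k-1}^W, N_k^E]$ one has $\mcl R_n - \mcl R_{N_{k-1}^W} = \mcl X_n - 1$. Since $\mcl X \leq 0$ throughout each east phase and $\mcl X \geq 1$ throughout each west phase, the sign of $\mcl X$ identifies the current phase, and the transition times are within one step of being zeros of $\mcl X$. Writing $\sigma_s := \sup\{u \leq s : \wt X_u = 0\}$, uniform a.s.\ convergence together with the negligibility of the unit-sized phase-transition jump after $m^{-1/2}$ scaling should yield
\[
L_s - L_{\sigma_s} = -\wt X_s \text{ if } \wt X_s \leq 0, \qquad R_s - R_{\sigma_s} = \wt X_s \text{ if } \wt X_s \geq 0 .
\]
Equivalently, each excursion of $\wt X$ away from zero corresponds to an excursion of $Z - Z_{\sigma_s}$ lying in either the upper half-plane (when $\wt X > 0$) or the right half-plane (when $\wt X < 0$), with the sign of $\wt X$ recording which.

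These are precisely the defining relations of the pair $(Z, X)$ in Proposition~\ref{prop:ppp} applied with $\kappa' = 12$ and $\theta = 0$ (so $p = 1/2$). Setting $\wt W_s := Z_s - Z_{\sigma_s}$, the relations force $(Z, \wt W)$ to have the same joint law as the continuum excursion pair $(Z, W)$ discussed in Section~\ref{sec:continuum-decomp}; hence by Proposition~\ref{prop:bm-determined} and Lemma~\ref{prop:excursion-msrblty}, $\wt X$ is determined by $\wt W$ in exactly the same way as $X$ is determined by $W$, yielding $(Z, \wt X) \eqD (Z, X)$. Since every subsequential limit has this joint law, $(Z^m, X^m) \rtaD (Z, X)$. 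The hardest part will be the careful execution of the previous paragraph's limiting argument: showing that the discrete phase-boundary times $N_k^E / m$ and $N_k^W / m$ converge on compact intervals to zeros of $\wt X$, that the unit-sized jump $\mcl X_{N_k^W - 1} = 0 \to \mcl X_{N_k^W} = 1$ (negligible only after rescaling) does not cause pathologies in the limit, and that the discrete-to-continuum correspondence between east/west phases and negative/positive excursions of $\wt X$ is fully preserved so no excursion is missed.
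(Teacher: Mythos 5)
Your setup is sound: the tightness step, the discrete identities $\mcl L_n - \mcl L_{N_k^E} = -\mcl X_n$ on east phases and $\mcl R_n - \mcl R_{N_{k-1}^W} = \mcl X_n - 1$ on west phases, and the passage of the phase-boundary times to zeros of the limiting $\wt X$ (the paper does exactly this last step in Lemma~\ref{prop:hitting-time-conv}) are all correct. The fatal gap is the sentence claiming that the limiting pathwise relations ``force $(Z,\wt W)$ to have the same joint law as $(Z,W)$.'' Those relations say only that each complementary interval of $\wt X^{-1}(0)$ is an interval on which one coordinate of $Z$ performs an excursion above its starting value, with $|\wt X|$ tracing that coordinate. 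This is a necessary condition satisfied by $(Z,X)$, but it does not determine the joint law of $(Z,\wt X)$: it says nothing about the law of the random set $\wt X^{-1}(0)$ jointly with $Z$, nor about the conditional law of the full two-dimensional excursions given that set, and there are a priori many closed sets coupled with $Z$ whose complementary intervals admit such a description. Your appeal to Proposition~\ref{prop:bm-determined} and Lemma~\ref{prop:excursion-msrblty} at the end is then circular: to use the fact that $X$ is an a.s.\ measurable functional of $Z$ you must first know $(Z,\wt X)\eqD(Z,X)$, which is precisely the assertion being proved.

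The paper fills this gap with two substantial pieces that your proposal omits. First, Lemma~\ref{prop:subsequence-conv} establishes that the subsequential limit has the correct \emph{excursion law} — i.e.\ that conditionally on the zero set, the two-dimensional excursions are the conditioned half-plane Brownian motions of Proposition~\ref{prop:ppp} — by proving an invariance principle for the conditioned random walk excursions (Lemma~\ref{prop:cond-walk-limit}) and passing the discrete conditional laws to the limit via Lemma~\ref{prop:cond-law-conv}; the pathwise identities alone give only one coordinate of each excursion. Second, even once the excursion law is identified, one must rule out that the excursions are glued together in a different way in the limit than in $(Z,X)$; the paper does this in Section~\ref{sec:triple-conv-proof} by building a coupling $(Z,Z',X)$ with matching excursion processes, verifying Markov properties inherited from the discrete walk (Lemmas~\ref{prop:limit-inc}, \ref{prop:Z'-markov}, \ref{prop:2bm-filtration}), and showing $Z-Z'$ is a continuous martingale constant off a set of Minkowski dimension $1/2$, hence of zero quadratic variation (Lemma~\ref{prop:2bm-same-excursion}). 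Your proposal contains no substitute for either step.
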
 
 
 It is clear from Donsker's theorem and Section~\ref{sec:excursions} that $Z^m \rta Z$ and $X^m \rta X$ in law, so the non-trivial content of Proposition~\ref{prop:triple-conv} is the convergence of the joint law. To prove this convergence, we start in Section~\ref{sec:random-walk-conv} by proving a straightforward invariance principle for a certain conditioned random walk toward the conditioned Brownian motions appearing in the statement of Proposition~\ref{prop:ppp}. In Section~\ref{sec:excursion-conv}, we use this invariance principle to prove that the pairs $(Z^m , X^m)$ converge in law along subsequences to couplings of the form $(Z' , X)$ where $Z'$ is a Brownian motion with the same law as $Z$; and that in any such coupling, the law of the excursions of $Z'$ away from the zero set of $X$ (recall from Proposition~\ref{prop:ppp} that this zero set is the same as the time set $\mcl A$) agree in law with the corresponding excursions of $Z$; see Lemma~\ref{prop:subsequence-conv} below. In Section~\ref{sec:triple-conv-proof}, we will conclude the proof of Proposition~\ref{prop:triple-conv} by showing that any such subsequential limit $(Z' , X)$ must agree in law with $(Z, X)$. This is done by means of Lemma~\ref{prop:excursion-msrblty}.

 \subsection{A scaling limit result for conditioned random walk}
 \label{sec:random-walk-conv}

Let $\Sigma$ be a symmetric positive definite $2\times 2$ matrix. Recall from Definition~\ref{def:bm-excursion} the definition of a correlated two-dimensional Brownian excursion with covariance matrix $\Sigma$ (which appear in the description of the conditional law given the time set $\mcl A$ of the excursions of $Z$ away from $\mcl A$ Proposition~\ref{prop:ppp}).
 In this subsection we will prove a simple scaling limit statement for this conditioned Brownian motion, which is needed for the proof of Proposition~\ref{prop:triple-conv}. Before stating and proving this result, we record the following fact from elementary probability theory which we will use several times throughout this section (see, e.g.,~\cite[Lemma 4.10]{gms-burger-cone} for a proof).

 \begin{lem} \label{prop:cond-law-conv}
 	Let $(A_m,B_m)$ be a sequence of pairs of random variables taking values in a product of separable metric spaces $\Omega_A\times\Omega_B$ and let $(A,B)$ be another such pair of random variables. Suppose $(A_m , B_m) \rta (A,B)$ in law. Suppose further that there is a family of probability measures $\mu_b$ on $\Omega_A$, indexed by $\Omega_B$, and a family of $\sigma(A_m)$-measurable events $E_m$ with $\lim_{m\rta\infty} \BB P[E_m] =1$ such that for each bounded continuous function $f : \Omega_A\rta\BB R $, we have
 	\eqbn
 	\BB E\left[ f(A_m) \, |\, B_m  \right] \BB 1_{E_m} \rta \BB E_{\mu_B}(f)  \quad \text{in law}.
 	\eqen
 	Then $\mu_B$ is the regular conditional law of $A$ given $B$.
 \end{lem}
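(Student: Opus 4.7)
The plan is to verify the defining integration identity of a regular conditional distribution: for every pair of bounded continuous test functions $f \colon \Omega_A \to \BB R$ and $g \colon \Omega_B \to \BB R$,
\eqb \label{eq:rcd-id}
\BB E[f(A)\, g(B)] \;=\; \BB E\bigl[g(B)\, \phi_f(B)\bigr], \qquad \phi_f(b) := \int f\, d\mu_b.
\eqe
Since $\Omega_A$ and $\Omega_B$ are separable metric, product test functions of this form separate joint laws, and regular conditional distributions are essentially unique, \eqref{eq:rcd-id} identifies $\mu_B$ as the regular conditional law of $A$ given $B$.

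First, set $Y_m := \BB E[f(A_m) \mid B_m]\, \BB 1_{E_m}$. I would expand the left-hand side of \eqref{eq:rcd-id} using the joint weak convergence $(A_m, B_m) \to (A, B)$, the tower property, and $\BB P[E_m^c] \to 0$:
\eqbn
\BB E[f(A)\,g(B)] \;=\; \lim_{m\to\infty} \BB E[f(A_m)\,g(B_m)] \;=\; \lim_{m\to\infty} \BB E\bigl[g(B_m)\, Y_m\bigr],
\eqen
where the last step costs only an $O(\|f\|_\infty \|g\|_\infty \BB P[E_m^c]) = o(1)$ error. The pair $(B_m, Y_m)$ takes values in $\Omega_B \times [-\|f\|_\infty, \|f\|_\infty]$ and is tight by marginal tightness, so along a subsequence it converges jointly in law to $(B^*, Y^*)$ with $B^* \eqD B$ and $Y^* \eqD \phi_f(B)$. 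Since $(b,y) \mapsto g(b)\,y$ is bounded continuous on this product, the subsequential limit of $\BB E[g(B_m)\,Y_m]$ equals $\BB E[g(B^*)\,Y^*]$. Provided $Y^* = \phi_f(B^*)$ almost surely, the identity $B^* \eqD B$ then gives $\BB E[g(B^*)\,Y^*] = \BB E[g(B)\,\phi_f(B)]$, which establishes \eqref{eq:rcd-id}.

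The main obstacle is precisely the identification $Y^* = \phi_f(B^*)$ almost surely, since marginal convergence of $Y_m$ alone does not pin down the joint coupling with $B^*$. The most natural reading of the hypothesis in fact already provides joint convergence $(B_m, Y_m) \to (B, \phi_f(B))$ in law (the left-hand side is a function of $(A_m,B_m)$, the right-hand side is a function of $B$, and the two are naturally compared as joint distributions), in which case the identification is automatic. If one reads the hypothesis instead as giving only marginal convergence of $Y_m$, one can still bootstrap to joint convergence by applying it simultaneously to a countable convergence-determining family $\{f_j\}_{j \in \BB N}$ of bounded continuous functions on $\Omega_A$: linearity of conditional expectation together with the Cram\'er--Wold device promotes the marginal convergences to joint convergence of the vectors $(Y_m^{f_j})_{j \leq k}$ for every $k$, and a diagonal extraction together with tightness of the random conditional measures $\nu_m := \BB P(A_m \in \cdot \mid B_m)\,\BB 1_{E_m}$ produces a joint limit $(B^*, \nu^*)$; running the tower-property computation of the previous paragraph with $f_j$ in place of $f$ identifies $\nu^* = \mu_{B^*}$ almost surely via the separating property of $\{f_j\}$, whence $Y^* = \int f\,d\nu^* = \phi_f(B^*)$ a.s., closing the argument.
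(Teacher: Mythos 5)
The paper does not actually prove this lemma — it only cites \cite[Lemma 4.10]{gms-burger-cone} — so I am assessing your argument on its own terms. Your first two paragraphs are correct and are the standard route: reduce to the disintegration identity $\E[f(A)g(B)]=\E[g(B)\phi_f(B)]$ for bounded continuous $f,g$ (which does characterize the regular conditional law, since such product test functions determine measures on a product of separable metric spaces), pass $\E[f(A_m)g(B_m)]$ through the tower property, and discard $E_m^c$ at cost $O(\|f\|_\infty\|g\|_\infty\,\P[E_m^c])$. You also correctly isolate the crux: one needs \emph{joint} convergence of $(B_m,Y_m)$ to $(B,\phi_f(B))$, not just marginal convergence of $Y_m$. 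Under that joint-convergence reading — which is the reading the paper actually uses, since in every application the hypothesis is verified a.s.\ in a Skorokhod coupling where $(A_m,B_m)\rta(A,B)$ a.s.\ — your proof is complete, and no subsequence extraction is even needed: continuous mapping applied to $(b,y)\mapsto g(b)y$ finishes the computation.

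The final paragraph, however, contains a genuine error: the lemma is \emph{false} under the marginal-only reading, so no bootstrap can work. Counterexample: let $\Omega_A=\Omega_B=\{0,1\}$, let $A_m=B_m=A=B=U$ be uniform on $\{0,1\}$, let $E_m$ be the whole space, and set $\mu_b:=\delta_{1-b}$. Then $\E[f(A_m)\mid B_m]=f(U)$ and $\E_{\mu_B}(f)=f(1-U)$ have the same law for every $f$, so the marginal hypothesis holds, yet $\mu_B=\delta_{1-B}$ is not the conditional law of $A$ given $B$. Your bootstrap fails at two identifiable points. First, Cram\'er--Wold plus linearity does give joint convergence of the vector $(Y_m^{f_1},\dots,Y_m^{f_k})$ among its own coordinates, but it gives no control whatsoever on the coupling of that vector with $B_m$, which is exactly the missing information. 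Second, the attempted identification $\nu^*=\mu_{B^*}$ is circular: the tower-property computation along the subsequence only yields $\E[g(B^*)\int f_j\,d\nu^*]=\E[f_j(A)g(B)]$ for all $g$, i.e.\ it identifies $\E[\int f_j\,d\nu^*\mid B^*]$ as the \emph{true} conditional expectation of $f_j(A)$ given $B$, not the random variable $\int f_j\,d\nu^*$ itself; concluding $\int f_j\,d\nu^*=\phi_{f_j}(B^*)$ from this would presuppose that $\mu$ is the regular conditional law, which is the statement being proved. The fix is simply to delete the bootstrap and state that the hypothesis is to be understood as joint convergence in law of $(A_m,B_m,\E[f(A_m)\mid B_m]\1_{E_m})$ to $(A,B,\E_{\mu_B}(f))$.
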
 
 
 \begin{lem} \label{prop:cond-walk-limit}
 	Let $\{(A_j , B_j)\}_{j\in\BB N}$ be a sequence of iid pairs of random variables, each of which takes on only countably many possible values, with finite covariance matrix $\Sigma$ satisfying $\det \Sigma \not=0$. Let $\dot\cL_0 = \dot\cR_0 = 0$ and for $m\in\BB N$, let $\dot\cL_m := \sum_{j=1}^m A_j$ and $\dot\cR_m := \sum_{j=1}^m B_j$. Extend $\dot\cL$ and $\dot\cR$ to functions $[0,\infty) \rta \BB R$ by linear interpolation. For $m \in\BB N$ and $t>0$, let
 	\eqbn
 	\wh L^m_t := m^{-1/2} A_{\lfloor m t \rfloor},\quad \wh R^m_t := m^{-1/2} B_{\lfloor m t \rfloor},\quad \wh Z^m_t := (\wh L^m_t ,\wh R^m_t ).
 	\eqen
 	Fix $T>0$, and deterministic sequences of non-negative numbers $T_m \rta T$ and $x_m \rta 0$ such that each $T_m$ is a positive integer multiple of $m^{-1}$ and each $x_m$ belongs to the support of the law of $\dot R^m_{T_m}$. For $m\in\BB N$, let 
 	\eqbn
 	E_m = E_m(x_m ,T_m) := \left\{\dot R^m_{T_m} =  -x_m   \right\} \cap \left\{\dot R^m_t \geq - x_m ,\, \forall t \in [0,T_m] \right\} .
 	\eqen
 	The conditional law of $\wh Z^m|_{[0,T]}$ given $E_m$ converges as $m\rta \infty$ (with respect to the $L^\infty$ metric) to the law of a Brownian excursion in the upper half-plane in time $T$ with covariance matrix $\Sigma$. 
 \end{lem}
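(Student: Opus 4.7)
The plan is to decouple the two coordinates at the discrete level by a linear shift and then invoke a classical one-dimensional invariance principle for conditioned random walks on the $R$-coordinate, together with a conditional CLT for the residual $L$-component. The key structural observation is that the event $E_m$ is determined entirely by the $B$-walk $\dot{\mcl R}$, so the conditioning acts only on the second coordinate.

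Concretely, set $c := \op{Cov}(A_1,B_1)/\op{Var}(B_1)$ and $\xi_j := A_j - c B_j$, so that $\BB E[\xi_j] = 0$ and $\op{Cov}(\xi_j, B_j) = 0$ by construction. With $\dot{\mcl \Xi}_n := \sum_{j=1}^n \xi_j$ and $\wh \Xi^m_t := m^{-1/2}\dot{\mcl \Xi}_{\lfloor mt\rfloor}$ we have identically $\wh L^m_t = c\, \wh R^m_t + \wh \Xi^m_t$, so it suffices to establish joint weak convergence $(\wh \Xi^m, \wh R^m) \mid E_m \Rightarrow (\Xi, \wh R)$ with $\wh R$ a Brownian excursion of variance $\op{Var}(B_1)$ on $[0,T]$ and $\Xi$ an independent Brownian motion of variance $\op{Var}(A_1) - c^2\op{Var}(B_1)$. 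The marginal convergence $\wh R^m \mid E_m \Rightarrow \wh R$ is a standard invariance principle for lattice walks conditioned to form an excursion (Iglehart, Kaigh), applicable because $(B_j)$ is a centered iid sequence taking countably many values with finite nonzero variance and $x_m \to 0$.

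For the joint conditional convergence the strategy is to condition on the entire trajectory of the $B$-walk rather than on $E_m$ alone. Given the $B$-walk, the $\xi_j$ are conditionally independent with distributions depending only on the respective $B_j$, so a conditional Lindeberg-Feller CLT applied along typical realizations of the $B$-walk delivers finite-dimensional convergence of $\wh \Xi^m$ to a Brownian motion. The limiting variance is governed by the ergodic average of $\op{Var}(\xi_1 \mid B_1)$ together with the partial sums of $\BB E[\xi_1\mid B_1]$, which by the defining choice of $c$ has mean zero and zero covariance with $B_1$; hence by a two-dimensional invariance principle its rescaled partial sum converges to a Brownian motion asymptotically independent of the $B$-walk even under the conditioning by $E_m$. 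Tightness in the $L^\infty$ topology follows from a standard moment bound applied conditionally on the $B$-walk. Integrating over the conditional law of the $B$-walk given $E_m$ and applying Lemma~\ref{prop:cond-law-conv} upgrades these pieces to joint weak convergence of $(\wh \Xi^m, \wh R^m)\mid E_m$ to $(\Xi, \wh R)$. The identity $\wh L^m = c \wh R^m + \wh \Xi^m$ then yields $(\wh L^m, \wh R^m)\mid E_m \Rightarrow (c\wh R + \Xi, \wh R)$, which by direct linear-algebraic identification coincides in law with a Brownian excursion in the upper half plane with covariance $\Sigma$ in the sense of the lemma.

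The main obstacle is the conditional joint convergence: because $\xi_j$ and $B_j$ are merely uncorrelated rather than independent, conditioning on the $\sigma(B\text{-walk})$-measurable event $E_m$ could a priori bias the scaling limit of $\wh \Xi^m$. Showing that this bias vanishes amounts to an ergodic estimate for iid functionals of the $B_j$'s averaged along a conditioned random walk excursion of prescribed duration and endpoint, which is where I expect the majority of the technical work to go.
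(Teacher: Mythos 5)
Your opening move (replace $A_j$ by $\xi_j = A_j - c B_j$ so that the two coordinates become uncorrelated) is essentially the paper's first step, which applies a linear transformation to reduce to the case where $\Sigma$ is diagonal. The marginal convergence of $\wh R^m$ given $E_m$ to a Brownian excursion is also handled the same way (the paper cites Sohier's invariance principle). The problem is the step you yourself flag at the end: the joint convergence. Your proposed mechanism --- condition on the entire $B$-walk and apply a conditional Lindeberg--Feller CLT to the $\xi_j$ --- only controls the part of $\wh\Xi^m$ that is genuinely random given the $B$-walk, namely $m^{-1/2}\sum_{j}(\xi_j - \BB E[\xi_j \mid B_j])$. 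The remaining piece, $m^{-1/2}\sum_{j \leq mt} g(B_j)$ with $g(b) := \BB E[A_1 - cB_1 \mid B_1 = b]$, is $\sigma(B\text{-walk})$-measurable, so conditioning on the $B$-walk does nothing to it, and no conditional CLT applies. Since $g$ is merely uncorrelated with (not independent of) $B_1$, showing that this partial-sum functional of the \emph{conditioned excursion} converges to a Brownian motion independent of the excursion is exactly an instance of the lemma you are trying to prove (for the pair $(g(B_j), B_j)$); invoking ``a two-dimensional invariance principle \dots even under the conditioning by $E_m$'' at that point is circular. So as written the argument has a genuine gap at its central step, and (unless $g \equiv 0$, which is not assumed) the gap cannot be closed by the conditional-CLT route alone.

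For comparison, the paper avoids this issue entirely by a soft compactness-plus-identification argument: marginal convergence gives tightness of the conditioned pair, so one extracts subsequential limits $\dot Z'$; on any interior window $[s_1,s_2] \subset (0,T)$ the conditioned walk, given its values at the endpoints, is an unconditioned random walk bridge restricted to stay positive, whose scaling limit is known (Liggett's bridge invariance principle combined with Lemma~\ref{prop:cond-law-conv}); this pins down the regular conditional law of $\dot Z'|_{[s_1,s_2]}$ given the endpoints as the correct conditioned Brownian bridge, and letting $s_1 \rta 0$, $s_2 \rta T$ identifies $\dot Z'$ with the upper-half-plane Brownian excursion. If you want to salvage your decomposition, you would need to prove the de-biasing estimate for $m^{-1/2}\sum_j g(B_j)$ along the conditioned excursion directly, e.g.\ by the same interior-window/bridge absolute-continuity argument --- at which point you have essentially reproduced the paper's proof.
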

 \begin{proof}
 	By applying a linear transformation, we can assume without loss of generality that the off-diagonal elements of $\Sigma$ are 0.  
 	Let $\dot Z = (\dot L,\dot R)$ be a Brownian excursion in the upper half-plane in time $T$ with covariance matrix $\Sigma$. 
 	By Donsker's theorem (resp.~\cite[Theorem 1.1]{sohier-excursion}) the conditional law given $E_m$ of $ \wh L^m|_{[0,T]}$ (resp.\ $\wh R^m|_{[0,T]}$) converges as $m\rta \infty$ to the law of $\dot L$ (resp.\ $\dot R$). 
 	By the theorems of Prokhorov and Skorokhod, for any sequence of positive integers tending to $\infty$, there exists a subsequence $m_k \rta\infty$ and a coupling of a sequence $ \dot Z^{m_k}  = (\dot L^{m_k} , \dot R^{m_k})$ of random paths with the conditional law of $\dot Z^{m_k}|_{[0,T]}$ given $E_{m_k}$ with a random continuous path $\dot Z' = (\dot L',\dot R')$ such that $\dot L' \eqD \dot L$, $\dot R'\eqD \dot R$, and $\dot Z^{m_k} \rta \dot Z'$.
 	Now let $0 < s_1 < s_2 < T$ . By the scaling limit result for random walk bridges~\cite{liggett-bridge} applied to the conditional law of $\dot Z^{m_k}|_{[s_1 , s_2]}$ given $\dot Z^{m_k}|_{[0,s_1  ]}$ and $\dot Z^{m_k}|_{[s_2 ,T]}$ together with Lemma~\ref{prop:cond-law-conv}, the regular conditional law of $\dot Z'|_{[s_1,s_2]}$ given $\dot Z'|_{[0,s_1]}$ and $\dot Z'|_{[s_2,T]}$ is that of a Brownian bridge with covariance matrix $\Sigma$ conditioned to stay in the upper half-plane. Taking a limit as $s_1 \rta 0$ and $s_2 \rta T$ shows that the conditional law of $\dot Z' $ given $\dot Z'_T = (\dot L'_T , 0)$ is what we would expect if we had $\dot Z' \eqD \dot Z$. Since $\dot Z_T' \eqD \dot Z_T$, we infer that in fact $\dot Z \eqD \dot Z'$.  
 \end{proof}

 \subsection{Scaling limit of excursions}
 \label{sec:excursion-conv}
 
In this subsection we will prove the following lemma, which is the main ingredient in the proof of Proposition~\ref{prop:triple-conv}.  
For the statement, we recall the definition of the discontinuous process $W$ from~\eqref{eqn:excursion-process}, which encodes the excursions of $Z$ away from the zero set of $X$.

 \begin{lem} \label{prop:subsequence-conv}
Suppose we are in the setting of Proposition~\ref{prop:triple-conv}. For any sequence of positive integers tending to $\infty$, we can find a subsequence $m_k \rta \infty$ and a coupling $(Z' ,  X  )$ of a random path $Z'$ in $\BB R^2$ with $X$ such that $(Z^{m_k} , X^{m_k}  ) \rta (Z' ,   X   )$ in law, $Z' \eqD Z$, and $((Z'_s  - Z_{\wt\tau_{s} }')_{s\geq 0},X) \eqD (W,X)$. 
 \end{lem}
 
To prove Lemma~\ref{prop:subsequence-conv}, we first need a scaling limit result for the discrete analogues of the times $\wt\tau_s$ and $\tau_s$ and the events $E_s$ appearing in Proposition~\ref{prop:ppp}. 

 In the notation of Section~\ref{sec:excursions}, for $m\in\BB N$ and $s\geq 0$, let $E_s^m$ be the event that $\mcl X_{\lfloor s m \rfloor} \geq 1$. If $E_s^m$ (resp.\ $(E_s^m)^c$) occurs, let $\wt\tau_s^m$ be equal to $m^{-1/2}$ times one plus the largest $i \leq \lfloor s m \rfloor$ for which $\mcl X_{\lfloor s m \rfloor}$ is equal to 0 (resp.\ 1) and let $\tau_s^m$ be equal to $m^{-1/2}$ times the smallest $i \geq \lfloor s m \rfloor+1$ for which $\mcl X_{\lfloor s m \rfloor} $ is equal to 0 (resp.\ 1). Equivalently, $[\wt\tau_s^m , \tau_s^m]$ is obtained by re-scaling the excursion interval of $\lambda'$ away from the north-going discrete flow line started from 0 which contains $\lfloor s m \rfloor$ by $m^{-1/2}$. 
 
 \begin{lem} \label{prop:hitting-time-conv}
 	Suppose we are given a sequence of positive integer $m_k \rta \infty$ such that if we couple $(X^{m_k})_{k \in\BB N}$ with $X$ in such a way that $X^{m_k} \rta X$ a.s., then it is a.s.\ the case that for each $s \geq 0$ for which $X_s \not =0$, we have
 	\eqbn
 	(\tau_s^{m_k} , \wt\tau_s^{m_k} , \BB 1_{E_s^{m_k}} ) \rta (\tau_s , \wt\tau_s , \BB 1_{E_s} ) .
 	\eqen
 \end{lem}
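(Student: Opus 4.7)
The strategy is to combine the hypothesized a.s.\ uniform convergence $X^{m_k}\to X$ on compact sets with the oscillation of one-dimensional Brownian motion near its zeros, which together pin down the discrete excursion endpoints.

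First I would unwind the definitions of Section~\ref{sec:excursions} to note that the interval $[\wt\tau_s^m, \tau_s^m]$ is, after rescaling by $m^{-1}$, precisely the maximal discrete excursion interval of $\mcl X$ containing $\lfloor sm\rfloor$, of west type (on which $\mcl X \geq 1$) if $E_s^m$ occurs and of east type (on which $\mcl X \leq 0$ except at the terminal time) otherwise. In the west case both endpoints arise from times at which $\mcl X = 0$, while in the east case both endpoints arise from times at which $\mcl X = 1$; in either situation $X^{m}_{\wt\tau_s^m}$ and $X^{m}_{\tau_s^m}$ are $O(m^{-1/2})$.

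Fix a realization of the coupling on which $X^{m_k}\to X$ uniformly on each compact subset of $[0,\infty)$, and work pointwise at a fixed $s \geq 0$ with $X_s \neq 0$; assume WLOG $X_s > 0$, so $\wt\tau_s < s$ and $X > 0$ on $(\wt\tau_s,\tau_s)$. Uniform convergence gives $X^{m_k}_s > 0$ for large $k$, hence $\mcl X_{\lfloor sm_k\rfloor}\ge 1$, so $E_s^{m_k}$ holds and $\BB 1_{E_s^{m_k}}\to \BB 1_{E_s}$. For the upper bound $\limsup \wt\tau_s^{m_k} \leq \wt\tau_s$, fix $\epsilon > 0$; continuity of $X$ gives $\inf_{[\wt\tau_s+\epsilon,s]}X > 0$, so by uniform convergence $\mcl X \geq 1$ on the entire discrete interval $[(\wt\tau_s+\epsilon)m_k, sm_k]$ for large $k$, ruling out any zero of $\mcl X$ there. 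For the lower bound I use the fact that a.s.\ the zero set of a standard Brownian motion is perfect with excursions of both signs accumulating at each zero, so for any $\delta > 0$ there exists $t_1 \in (\wt\tau_s-\delta,\wt\tau_s)$ with $X_{t_1} < 0$. Uniform convergence then gives $\mcl X_{\lfloor t_1 m_k\rfloor}\leq -1$, and combined with $\mcl X_{\lfloor sm_k\rfloor} \geq 1$ and the fact that $\mcl X$ has increments in $\{-1,0,1\}$, the discrete intermediate value property forces a zero of $\mcl X$ inside $[\lfloor t_1 m_k\rfloor, \lfloor sm_k\rfloor]$; hence $\wt\tau_s^{m_k} \geq t_1 - O(m_k^{-1}) > \wt\tau_s - \delta$ for large $k$. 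Letting $\epsilon,\delta \to 0$ gives $\wt\tau_s^{m_k}\to \wt\tau_s$, and a symmetric argument (using oscillation of $X$ to the right of $\tau_s$) handles $\tau_s^{m_k}\to\tau_s$. The case $X_s < 0$ is entirely analogous, with the value $1$ playing the role of $0$ in the discrete arguments.

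To upgrade from pointwise a.s.\ convergence to the simultaneous statement claimed by the lemma, I take a countable dense set of deterministic times (say $\BB Q \cap [0,\infty)$), apply the pointwise result at each, and intersect the resulting probability-one events. On this good event, any $s \in [0,\infty)$ with $X_s\neq 0$ lies in some continuum excursion interval $(\wt\tau_s,\tau_s)$ of $X$ which also contains some rational $q$ with the same sign; for all sufficiently large $k$ (depending on the distance of $s$ from the endpoints) the points $\lfloor sm_k\rfloor$ and $\lfloor qm_k\rfloor$ lie in the same maximal discrete excursion interval of $\mcl X$, so $(\wt\tau_s^{m_k}, \tau_s^{m_k}, \BB 1_{E_s^{m_k}}) = (\wt\tau_q^{m_k}, \tau_q^{m_k}, \BB 1_{E_q^{m_k}})$, and convergence transfers from $q$ to $s$. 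The main technical subtlety is the lower bound step: it rests on the perfectness of the Brownian zero set together with the elementary but essential fact that excursions of both signs accumulate at every zero, which is what allows the oscillation of $\mcl X$ near a discrete east--west transition to pin the rescaled boundary times to their continuum counterparts.
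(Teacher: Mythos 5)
Your proof is correct and follows essentially the same route as the paper's: both directions rest on (a) the a.s.\ fact that $X$ takes negative values in every left-neighborhood of $\wt\tau_s$ and every right-neighborhood of $\tau_s$, which via uniform convergence forces a sign change (hence a zero) of $\mcl X$ there and prevents the discrete excursion interval from extending past the continuum one, and (b) the strict positivity of $X$ on compact subintervals of $(\wt\tau_s,\tau_s)$, which prevents the discrete interval from collapsing inward (the paper phrases (b) as a compactness/contradiction argument ruling out a zero local minimum of $X$ inside the excursion, while you bound $\inf X$ directly; these are the same fact). Your explicit upgrade to simultaneity over all $s$ via rational times is a harmless addition --- the pointwise argument already uses only path properties of $X$ that hold a.s.\ uniformly over all excursion intervals, which is how the paper handles it implicitly.
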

 \begin{proof}
 	If $X_s \not= 0$, then $E_s $ occurs if and only if $X_s > 0$ and $E_s^{m_k}$ occurs if and only if $X_s^{m_k} \geq m_k^{-1/2}$. It follows that a.s.\ $\BB 1_{E_s^{m_k}}  \rta  \BB 1_{E_s}$ for each $s \geq 0$ such that $X_s \not=0$. Now suppose that $s \geq 0$ such that $X_s > 0$. Almost surely, for each such $s$ and each $\ep > 0$ we can find $t_- \in [\wt\tau_s - \ep , \wt\tau_s]$ and $t_+ \in [\tau_s , \tau_s + \ep]$ such that $X_{t_-} < 0$ and $X_{t_+} < 0$. For large enough $m_k \in\BB N$ we have $X^{m_k}_{t_-} < 0$ and $X^{m_k}_{t_+} < 0$. For such an $m_k$ we have $\wt\tau_s^{m_k} \geq t_-$ and $\tau_s^{m_k} \leq t_+$. Since $\ep$ is arbitrary it follows that a.s.\
 	\eqbn
 	\liminf_{k \rta \infty }\wt\tau_s^{m_k} \geq \wt\tau_s \quad \op{and} \quad \limsup_{k \rta \infty } \tau_s^{m_k} \leq  \tau_s .
 	\eqen  
 	If we do not have $\tau_s^{m_k} \rta \tau_s$, then by compactness we can find $t \in (\wt\tau_s , \tau_s)$ and a sequence $k_j \rta\infty$ for which $\tau_s^{m_{k_j}} \rta t$. Then we have $X_{\tau_s^{m_{k_j}}}^{m_{k_j}} \rta X_t$, so $X_t  = 0$. It follows that $X$ attains a local minimum with value 0 at time $t$. Almost surely, there is no $t \geq 0$ for which this is the case. Hence we must have $\tau_s^{m_k} \rta \tau_s$. Similarly $\wt\tau_s^{m_k} \rta \wt\tau_s$. A similar argument shows that a.s.\ $\tau_s^{m_k} \rta \tau_s$ and $\wt\tau_s^{m_k} \rta \wt\tau_s$ whenever $X_s < 0$. 
 \end{proof}

 \begin{proof}[Proof of Lemma~\ref{prop:subsequence-conv}]
 	Since $Z^m \rta Z$ and $X^m \rta X$ in law, the joint laws of the pairs $(Z^{m } , X^{m }   )$ are tight, so for each sequence of positive integers tending to $\infty$, we can find a subsequence $m_k \rta \infty$ and a coupling $(Z' ,   X  )$ of a random path $Z'$ in $\BB R^2$ with $X$ such that $Z' \eqD Z$ and $(Z^{m_k} , X^{m_k}  ) \rta (Z' , X )$ in law. By the Skorokhod theorem we can find a coupling of $\{(Z^{m_k} , X^{m_k}  )\}_{k\in\BB N}$ with $(Z' , X )$ such that $(Z^{m_k} , X^{m_k} ) \rta (Z' , X )$ uniformly on compact intervals a.s.. By Lemma~\ref{prop:hitting-time-conv}, in any such coupling we have
 	\eqb \label{eqn:times-conv}
 	(\tau_s^{m_k} , \wt\tau_s^{m_k} , \BB 1_{E_s^{m_k}} ) \rta (\tau_s , \wt\tau_s , \BB 1_{E_s} ) ,\quad \forall s \geq 0 \: \text{with $X_s \not=0$}.
 	\eqe 
 	
 	Now fix $r \in\BB N$ and suppose given $s_1 , \dots , s_r > 0$. For $m\in\BB N$ let $\mcl G^m = \mcl G^m(s_1,\dots , s_r)$ be the $\sigma$-algebra generated by $\{(\tau_{s_j}^m , \wt\tau_{s_j}^m , \BB 1_{E_{s_j}^m}) \,:\,  j\in [1,r]_{\BB Z} \}$. Also let let $\mcl G = \mcl G(s_1,\dots , s_r)$ be the $\sigma$-algebra generated by $\{(\tau_{s_j}  , \wt\tau_{s_j}  , \BB 1_{E_{s_j}} ) \,:\, j \in [1,r]_{\BB Z}\}$. 
 	
 	In the notation of Section~\ref{sec:excursions}, for $m\in\BB N$ and $j\in [1,r]_{\BB Z}$ the time $\wt\tau_{s_j}^m$ (resp.\ $ \tau_{s_j}^m$) is equal to $m^{-1/2}$ the last (resp.\ first) time of the form $N_k^W$ or $N_k^E$ which comes before (resp.\ after) $\lfloor s_j m \rfloor$. 
 	Let $b := \left( 3\alpha/2 \right)^{1/2}$ be the constant appearing in~\eqref{eqn:rescaled-walk}. 
 	By Lemma~\ref{prop:discrete-excursions} and the strong Markov property, the conditional law given $\mcl G^m$ of each of the paths $(Z^m - Z^m_{\wt\tau_{s_j}^m} , X^m)|_{[\wt\tau_{s_j}^m , \tau_{s_j}^m]}$ for $j \in [1,r]_{\BB Z}$ such that $E_{s_j}^m$ occurs is the same as the conditional law of $(Z^m , R^m + b^{-1} m^{-1/2} ) |_{[0, \tau_s^m  - \wt\tau_s^m]}$ given that $R^m$ hits $0$ for the first time at time $\tau_s^m - \wt\tau_s^m$. The conditional law given $(Z^m - Z^m_{\wt\tau_{s_j}^m} , X^m)|_{[\wt\tau_{s_j}^m , \tau_{s_j}^m]}$ for $j \in [1,r]_{\BB Z}$ such that $(E_{s_j}^m)^c$ occurs is the same as the conditional law of $(Z^m , - L^m) |_{[0, \tau_{s_j}^m  - \wt\tau_{s_j}^m]}$ given that $L^m$ hits $-b^{-1} m^{-1/2}$ for the first time at time $\tau_{s_j}^m - \wt\tau_{s_j}^m$. Furthermore, the paths $(Z^m - Z^m_{\wt\tau_{s_j}^m} , X^m)|_{[\wt\tau_{s_j}^m , \tau_{s_j}^m]}$ for $j \in [1,r]_{\BB Z}$ such that the intervals $[\wt\tau_{s_j}^m , \tau_{s_j}^m]$ are distinct are conditionally independent given $\mcl G^m$. 
 	
 	By Proposition~\ref{prop:ppp}, the conditional law given $\mcl G$ of $(Z - Z_{\wt\tau_{s_j}} , X)|_{[\wt\tau_{s_j} , \tau_{s_j}]}$ for $j \in [1,r]_{\BB Z}$ such that $E_{s_j}$ occurs is the same as the joint law of a Brownian motion with covariance matrix as in~\eqref{eqn:triple-conv-cov} started from 0 and conditioned so that its second coordinate stays positive until time $\tau_{s_j}- \wt\tau_{s_j}$ and hits 0 for the first time at time $\tau_{s_j} - \wt\tau_{s_j}$; and the second coordinate of this Brownian motion. The conditional law given $\mcl G$ of $(Z - Z_{\wt\tau_{s_j}} , X)|_{[\wt\tau_{s_j} , \tau_{s_j}]}$ for $j \in [1,r]_{\BB Z}$ such that $E_{s_j}^c$ occurs is the same as the joint law of a Brownian motion with covariance matrix as in~\eqref{eqn:triple-conv-cov} started from 0 and conditioned so that its first coordinate stays positive until time $\tau_{s_j} - \wt\tau_{s_j}$ and hits 0 for the first time at time $\tau_{s_j} - \wt\tau_{s_j}$; and $-1$ times the first coordinate of this Brownian motion. Furthermore, the paths $(Z - Z_{\wt\tau_{s_j}} , X)|_{[\wt\tau_{s_j} , \tau_{s_j}]}$ for $j \in [1,r]_{\BB Z}$ such that the intervals $[\wt\tau_{s_j}  , \tau_{s_j} ]$ are distinct are conditionally independent given $\mcl G$.  
 	
 	It is a.s.\ the case that $X_{s_1} , \dots , X_{s_r}$ are all non-zero. By Lemma~\ref{prop:cond-walk-limit},~\eqref{eqn:times-conv}, and the above characterization of conditional laws given $\mcl G^m$ and $\mcl G$, for any coupling of $\{(Z^{m_k} , X^{m_k})\}_{k\in\BB N}$ with $(Z' , X)$ as above, it holds that the joint conditional laws of $\{(Z^{m_k} - Z^{m_k}_{\wt\tau_{s_j}^{m_k}} , X)|_{[\wt\tau_{s_j}^{m_k} , \tau_{s_j}^{m_k}]} \}_{j\in [1,r]_{\BB Z}} $ given $\mcl G^{m_k}$ converge a.s.\ as $k\rta \infty$ to the joint conditional law of $\{(W , X)|_{[\wt\tau_{s_j} , \tau_{s_j}]} \}_{j\in [1,r]_{\BB Z}}$ given $\mcl G$. 
 	By Lemma~\ref{prop:cond-law-conv}, the joint conditional laws given $\mcl G$ of
 	\eqbn
 	\left\{(Z'  - Z_{\wt\tau_{s_j} }' , X)|_{[\wt\tau_{s_j} , \tau_{s_j}]} \right\}_{j\in [1,r]_{\BB Z}} \quad \op{and} \quad \left\{(W , X)|_{[\wt\tau_{s_j} , \tau_{s_j}]} \right\}_{j\in [1,r]_{\BB Z}}  
 	\eqen
 	agree. 
 	Since $Z'$, $W$, and $X$ are right continuous (hence determined by their values at countably many times) and this equality of conditional laws holds a.s.\ for any fixed $s_1 , \dots ,s_r  > 0$, the statement of the lemma follows. 
 \end{proof}
 
 \subsection{Proof of Proposition~\ref{prop:triple-conv}}
 \label{sec:triple-conv-proof}

 Suppose given a subsequence $m_k\rta\infty$ and a coupling $(Z' ,  X  )$ of a random path $Z'$ in $\BB C$ with $X$ satisfying the conditions Lemma~\ref{prop:subsequence-conv}.  It is immediate from the conditions on $Z'$ in Lemma~\ref{prop:subsequence-conv} that we can find a coupling $(Z, Z' , X  )$ for which a.s.\
 \eqb \label{eqn:excursion-agree}
 Z'_s  - Z_{\wt\tau_{s} }' = W_s ,\quad \forall s \geq 0,
 \eqe  
 $Z$ and $Z'$ are conditionally independent given $X$ and $W$, and $(Z,X)$ has the law described in the beginning of this section. Indeed, such a coupling can be produced by first sampling $(Z' , X)$ and then sampling $Z$ from its conditional law given that $(W_s)_{s\geq 0} = (Z'_s  - Z_{\wt\tau_s}'    )_{s\geq 0}$ (under which it is a.s.\ determined by $W$ by Proposition~\ref{prop:bm-determined}). 
 We want to show that in any such coupling we have $Z = Z'$ a.s. 
 By our choice of coupling, we know that the excursions of $Z$ and $Z'$ away from the set $\mcl A$ of Proposition~\ref{prop:ppp} (which are encoded by $W$) agree, so this amounts to ruling out the possibility that these excursions are glued together in two different manners to get $Z$ and $Z'$. 
 We will accomplish this using a similar argument to that used in the proof of Proposition~\ref{prop:bm-determined}, but before we can do so we need to prove a few lemmas about the process $Z'$ in our coupling.
 
 \begin{lem} \label{prop:limit-inc}
 	Suppose we are given a sequence $m_k \rta \infty$ and a coupled pair of Brownian motions $(Z' , X )$ such that $(Z^{m_k} , X^{m_k} ) \rta (Z' , X )$ in law. Then for each $t\geq 0$ the conditional law of $(Z' - Z_t')|_{[t,\infty)}$ given $(Z' , X )|_{[0,t]}$ is the same as the law of $Z $. 
 \end{lem}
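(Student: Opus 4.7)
The plan is to exploit two discrete facts and pass to the weak limit: (i) $X^{m_k}|_{[0,t]}$ is a deterministic functional of $Z^{m_k}|_{[0,t]}$, and (ii) $Z^{m_k}$ is a random walk with i.i.d.\ increments, so the ordinary Markov property gives the required independence at the discrete level.

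First, I would verify that the rescaled process $X^{m_k}$ is adapted to the filtration generated by $Z^{m_k}$. From the recursion \eqref{eq:Xm} combined with the characterization \eqref{descrdec-eq3} of the stopping times $N_k^E, N_k^W$, a straightforward induction on $i$ shows that each $\cX_i$ is a deterministic function of $(\cZ_0,\dots,\cZ_i)$: to decide whether $i$ lies in an east or west excursion one only needs to scan $\cZ$ up to time $i$ and locate the most recent occurrence of $N_k^E$ or $N_k^W$. After rescaling, this yields $X^{m_k}|_{[0,t]} \in \sigma(Z^{m_k}|_{[0,t_k]})$, where $t_k := \lceil tm_k \rceil/m_k \to t$.

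Next, since $\cZ$ has i.i.d.\ increments uniform on $\{(1,0),(0,-1),(-1,1)\}$, the shifted walk $(\cZ_{j+\lceil tm_k \rceil} - \cZ_{\lceil tm_k \rceil})_{j \geq 0}$ is independent of $(\cZ_j)_{j \leq \lceil tm_k \rceil}$ and has the same law as $\cZ$. Combined with the measurability established in the previous step, this gives that $(Z^{m_k} - Z^{m_k}_{t_k})|_{[t_k,\infty)}$ is independent of $(Z^{m_k}, X^{m_k})|_{[0,t_k]}$ and distributed as $Z^{m_k}$.

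Finally, I would pass to the weak limit along the subsequence $m_k$. By the Skorokhod representation theorem we may assume $(Z^{m_k}, X^{m_k}) \to (Z', X)$ uniformly on compact sets a.s., and since $t_k \to t$ and $Z'$ is continuous we obtain the joint convergence
\[
\bigl((Z^{m_k}, X^{m_k})|_{[0,t]},\, (Z^{m_k} - Z^{m_k}_{t_k})|_{[t_k,\infty)}\bigr) \;\to\; \bigl((Z', X)|_{[0,t]},\, (Z' - Z'_t)|_{[t,\infty)}\bigr)
\]
in law. Combined with Donsker's theorem, which identifies the limiting marginal law of the second component as that of $Z$, the asserted independence and marginal distribution persist in the limit. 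The only mild technicality is the integer-rounding boundary effect at time $t$, absorbed by the shift $t \leadsto t_k$ and continuity of $Z'$; I do not foresee a substantive obstacle.
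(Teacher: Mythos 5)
Your proof is correct and follows essentially the same route as the paper's: adaptedness of $\mcl X$ to the filtration of $\mcl Z$ (which is the content of Lemma~\ref{prop:discrete-excursions}, which you re-derive), independence at the discrete level from the i.i.d.\ increments of $\mcl Z$, and passage to the weak limit. Your explicit treatment of the rounding via $t_k$ is a minor refinement of a detail the paper leaves implicit.
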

 \begin{proof} 
 	By Lemma~\ref{prop:discrete-excursions}, the one-dimensional random walk $\mcl X$ is adapted to the filtration generated by the two-dimensional random walk $\mcl Z$. Since $\mcl Z$ is a random walk with iid increments, we find that for each $t \geq 0$, the walks $(\mcl Z - \mcl Z_{\lfloor t m \rfloor})|_{[\lfloor t m \rfloor,\infty)_{\BB Z}}$ and $(\mcl Z , \mcl X)|_{[0,\lfloor t m \rfloor]_{\BB Z}}$ are independent. By passing to the scaling limit along our subsequence $m_k$, we find that for each $t\geq 0$, $(Z' - Z_t')|_{[t,\infty)}$ and $(Z' , X')|_{[0,t]}$ are independent. Since $Z' \eqD Z$, the statement of the lemma follows from the Markov property of Brownian motion.
 \end{proof}
 
 We also have some further Markov properties of the coupling $(Z , Z' , X)$ described at the beginning of this subsection, this time concerning the interaction of $Z'$ with the excursion process $W$.

 \begin{lem} \label{prop:Z'-markov}
 	Suppose given a subsequence $m_k \rta \infty$ and a coupling $(Z , Z' , X)$ as in the beginning of this subsection. Then we have the following Markov properties for each $t\geq 0$. The conditional law of $W|_{[t,\tau_t]}$ given $(Z' , W)|_{[0,t]}$ is that of a Brownian motion with covariance matrix as in~\eqref{eqn:triple-conv-cov} started from $W_t$, run until the first time its first (resp.\ second) coordinate hits 0 if $E_t^c$ (resp.\ $E_t$) occurs; and the conditional law of $(Z' - Z_{\tau_t}' , W)|_{[\tau_t , \infty)}$ given $(Z' , W )|_{[0,\tau_t]}$ is the same as the unconditional law of $(Z' , W)$.
 \end{lem}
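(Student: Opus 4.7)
\emph{First assertion.} This follows from Lemma~\ref{prop:limit-inc} after three observations. First, the sigma-algebras $\sigma((Z', W)|_{[0,t]})$ and $\sigma((Z', X)|_{[0,t]})$ coincide: the inclusion ``$\supseteq$'' is Lemma~\ref{prop:excursion-msrblty}, and the reverse holds because $\wt\tau_s$ for $s \in [0,t]$ is determined by $X|_{[0,t]}$ via $\mcl A = X^{-1}(0)$, so $W_s = Z'_s - Z'_{\wt\tau_s}$ is a function of $(Z', X)|_{[0,t]}$. Second, $\mcl A \cap (t,\tau_t) = \emptyset$ implies $\wt\tau_s = \wt\tau_t$ for $s \in [t,\tau_t]$, giving $W_s = W_t + (Z'_s - Z'_t)$. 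Third, $\tau_t$ is the first time $s > t$ at which the second (resp.\ first) coordinate of $W_t + (Z'_s - Z'_t)$ returns to $0$, depending on whether $E_t$ (resp.\ $E_t^c$) occurs. Combining these with Lemma~\ref{prop:limit-inc}, which says $(Z'-Z'_t)|_{[t,\infty)}$ conditional on $(Z',X)|_{[0,t]}$ is Brownian motion with covariance matrix as in~\eqref{eqn:triple-conv-cov}, yields the first assertion.

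\emph{Second assertion, reduction.} I reduce it to a joint strong Markov property for $(Z',X)$ at $\tau_t$. Since $\tau_t \in \mcl A$ implies $\wt\tau_{\tau_t} = \tau_t$, for each $s \geq \tau_t$ the time $\wt\tau_s$ lies in $[\tau_t,s]$ and is determined by $X|_{[\tau_t,\infty)}$. Hence $W_s = (Z'_s - Z'_{\tau_t}) - (Z'_{\wt\tau_s} - Z'_{\tau_t})$, so $W|_{[\tau_t,\infty)}$ is a deterministic function of $((Z'-Z'_{\tau_t}), X)|_{[\tau_t,\infty)}$ via the same formula that defines $W$ from $(Z', X)$. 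It therefore suffices to show that conditional on $(Z',X)|_{[0,\tau_t]}$, the pair $((Z'-Z'_{\tau_t}), X)|_{[\tau_t,\infty)}$ is distributed like the unconditional $(Z', X)$ and is independent of the past.

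\emph{Second assertion, proof.} I establish this joint strong Markov property by passing the discrete analog through the scaling limit. By Lemma~\ref{prop:discrete-excursions}, $\mcl X$ is a deterministic functional of the i.i.d.\ increments of the random walk $\mcl Z$. Consequently, at any discrete stopping time $N^E_k$ at which $\mcl X$ hits $0$, the pair $(\mcl Z - \mcl Z_{N^E_k}, \mcl X)|_{[N^E_k, \infty)_{\BB Z}}$ has the same joint law as $(\mcl Z, \mcl X)$ started from $\mcl X = 0$ and is independent of $(\mcl Z,\mcl X)|_{[0,N^E_k]_{\BB Z}}$. By Lemma~\ref{prop:hitting-time-conv}, under a Skorokhod realization of the joint convergence $(Z^{m_k}, X^{m_k}) \rta (Z', X)$, the rescaled first return times of $\mcl X^{m_k}$ to $0$ after time $\lfloor t m_k \rfloor$ converge almost surely to $\tau_t$. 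Passing the discrete joint strong Markov property through this limit, using Lemma~\ref{prop:cond-law-conv} to transfer the conditional laws, yields the required joint strong Markov property for $(Z', X)$ at $\tau_t$.

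\emph{Main obstacle.} The chief difficulty is this last step. Lemma~\ref{prop:limit-inc} provides a Markov property only for the process $Z'$ individually, not jointly with $X$, so the joint behavior of $(Z', X)$ past $\tau_t$ is not accessible purely from continuum Markov properties---one genuinely needs the discrete construction of $\mcl X$ from $\mcl Z$. Care is also required to verify that the conditional laws along the subsequence $m_k$ converge at the random stopping time $\tau_t$ rather than at deterministic times, for which one invokes the continuity estimates of Lemma~\ref{prop:hitting-time-conv} together with the standard invariance principle.
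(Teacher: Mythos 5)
Your treatment of the first assertion is correct but takes a genuinely different (and cleaner) route than the paper: you deduce it from the already-established continuum Markov property of Lemma~\ref{prop:limit-inc} together with the identification of the $\sigma$-algebras generated by $(Z',W)|_{[0,t]}$ and $(Z',X)|_{[0,t]}$, whereas the paper re-derives it from the discrete Markov property of Lemma~\ref{prop:discrete-excursions} and passes to the limit via Lemma~\ref{prop:cond-law-conv}. (Strictly you should invoke the \emph{argument} of Lemma~\ref{prop:excursion-msrblty} rather than the lemma itself, since that lemma concerns the original coupling; the transfer is immediate from $((Z'_s-Z'_{\wt\tau_s})_{s\geq 0},X)\eqD(W,X)$.)

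For the second assertion your strategy coincides with the paper's, but as written there is a gap on the event $E_t^c$, i.e.\ when $\lfloor t m_k\rfloor$ falls in an east excursion. In that case $\tau_t^{m_k}$ is \emph{not} the first return time of $\mcl X$ to $0$ after $\lfloor tm_k\rfloor$: by definition it is $m_k^{-1}N_k^W$, the first time $\mcl X$ reaches $1$, and $\mcl X$ may return to $0$ many times (once per crossing of $\lambda^N$) before then. Your renewal statement is asserted only at the times $N_k^E$, and it genuinely fails at $N_k^W$: there the rule~\eqref{eq:Xm} producing $\mcl X$ from the increments of $\mcl Z$ restarts in a west phase (reading $\Delta\mcl R$ until $\mcl X-1$ hits $-1$) rather than an east phase (reading $-\Delta\mcl L$ until hitting $1$), and since the second coordinate is a deterministic functional of the first, two distinct functionals of the same walk cannot have the same joint law with it. So on $E_t^c$ you cannot renew at $\tau_t^{m_k}$ by your stated renewal property, and the time at which you do renew is not the one whose convergence Lemma~\ref{prop:hitting-time-conv} controls.

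The gap is repairable along the lines you indicate, but two additional observations are needed: (i) \emph{every} zero of $\mcl X$ --- including the crossing times interior to an east excursion, not only the $N_k^E$ --- is a renewal point at which $(\mcl Z-\mcl Z_n,\mcl X_{n+\cdot})$ is a fresh copy of $(\mcl Z,\mcl X)$ independent of the past, because the phase is east at every zero so the rule~\eqref{eq:Xm} restarts identically there; and (ii) the first return time of $\mcl X^{m_k}$ to $0$ after $\lfloor tm_k\rfloor$, rescaled, still converges to $\tau_t$ on $E_t^c$, because $X<0$ on the open interval $(\wt\tau_t,\tau_t)$ and uniform convergence forces all zeros of $\mcl X^{m_k}$ inside the rescaled east excursion to accumulate at its endpoints; this requires a separate argument in the spirit of, but not contained in, Lemma~\ref{prop:hitting-time-conv}. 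With (i) and (ii) the passage to the limit via Lemma~\ref{prop:cond-law-conv} goes through as you describe. (The paper instead asserts the renewal directly at $\tau_t^{m_k}$, i.e.\ at $N_k^W$ on $E_t^c$, and is itself terse on precisely this point.)
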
 
 \begin{proof} 
 	By the Skorokhod theorem we can find a coupling of $\{(Z^{m_k} , X^{m_k})\}_{k \in\BB N}$ with $(Z , Z' , X)$ for which $Z^{m_k} \rta Z'$ and $X^{m_k} \rta X$ uniformly a.s.\ on each compact interval. 
 	
 	Lemma~\ref{prop:discrete-excursions} implies that for each $k\in\BB N$ and $t > 0$, the conditional law of $(Z^{m_k} - Z^{m_k}_{\wt\tau_t^{m_k}} )|_{[ m_k^{-1} \lfloor t m_k  \rfloor , \tau_t^{m_k} ]}$ given $(Z^{m_k}  , X^{m_k})|_{[0, m_k^{-1} \lfloor t m_k \rfloor]}$ is the same as the law of $Z^{m_k}$ started from $Z^{m_k}_{m_k^{-1} \lfloor t m_k \rfloor} - Z^{m_k}_{\wt\tau_t^{m_k}}$ and run until the first time that its first (resp.\ second) coordinate hits $ -m_k^{-1/2}$ if $(E_t^{m_k})^c$ (resp.\ $E_t^{m_k}$) occurs. Furthermore, the conditional law of $(Z^{m_k} - Z^{m_k}_{   \tau_t^{m_k}  } , X^{m_k} - m_k^{-1/2} \BB 1_{E_t^{m_k}} )|_{[\tau_t^{m_k} , \infty)}$ given $(Z^{m_k}  , X^{m_k})|_{[0, \tau_t^{m_k} ]}$ is the same as the unconditional law of $(Z^{m_k } , X^{m_k})$. 
 	
 	By Lemma~\ref{prop:hitting-time-conv}, we a.s.\ have $(\tau_s^{m_k} , \wt\tau_s^{m_k} , \BB 1_{E_s^{m_k}} ) \rta (\tau_s , \wt\tau_s , \BB 1_{E_s} )$. It therefore follows from Lemma~\ref{prop:cond-law-conv} that the conditional law of $W |_{[t,\tau_t]}$ given $(Z' , X)|_{[0,t]}$ is that of a Brownian motion with covariance matrix as in~\eqref{eqn:triple-conv-cov} started from $W_t$, run until the first time its first (resp.\ second) coordinate hits 0 if $E_t^c$ (resp.\ $E_t$) occurs; and the conditional law of $(Z' - Z_{\tau_t}' , X)|_{[\tau_t , \infty)}$ given $(Z' , X)|_{[0,\tau_t]}$ is the same as the unconditional law of $(Z' , X)$. It is clear from our choice of coupling and the definition~\eqref{eqn:excursion-process} of $W$ that $(Z',X)|_{[0,t]}$ a.s.\ determines $(Z,W)|_{[0,t]}$, $(Z' , X)|_{[0,\tau_t]}$ a.s.\ determines $(Z' , W)|_{[0,\tau_t]}$, and $(Z' - Z_{\tau_t}' , X)|_{[\tau_t , \infty)}$ a.s.\ determines $(Z' - Z_{\tau_t}' , W)|_{[\tau_t , \infty)}$ in the same manner that $(Z' , X)$ determines $(Z' , W)$. Furthermore, the argument of Lemma~\ref{prop:excursion-msrblty} shows that $(Z' , W)|_{[0,t]}$ a.s.\ determines whether or not the event $E_t$ occurs. The statement of the lemma follows. 
 \end{proof}

 Proposition~\ref{prop:triple-conv} will follow easily from the following lemma together with Lemma~\ref{prop:2bm-same-excursion}.

 \begin{lem} \label{prop:2bm-filtration}
 	Suppose given a subsequence $m_k \rta \infty$ and a coupling $(Z , Z' , X)$ as in the beginning of this subsection. For $t\geq 0$, let
 	\eqbn
 	\mcl F_t := \sigma\left((Z' , Z  )|_{[0,t]} \right) .
 	\eqen
 	Then for each $t\geq 0$, the conditional law of $(Z-Z_t)|_{[t,\infty)}$ given $\mcl F_t$ is the same as the law of $Z$ and the conditional law of $(Z' - Z_t')|_{[t,\infty)}$ given $\mcl F_t$ is the same as the law of $Z$. In particular, for each $t_2 \geq t_1 \geq  0$, we have
 	\eqbn
 	\BB E\left[Z_{t_2} - Z_{t_2}' \,|\, \mcl F_{t_1} \right] = Z_{t_1} - Z_{t_1}' .
 	\eqen
 \end{lem}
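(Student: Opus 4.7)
The plan is to mimic the proof of Proposition~\ref{prop:bm-determined}, reducing the two claimed conditional-law identities to parallel Markov properties of $Z$ and $Z'$. The martingale identity is then an immediate consequence of linearity, since both $Z$ and $Z'$ are mean-zero Brownian motions. I will carry out the argument for $Z'$ in detail; the identity for $Z$ follows by the symmetric argument.

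The heart of the proof is a redundancy observation: modulo the coupling, $Z|_{[0,t]}$ carries no additional information about $(Z'-Z'_t)|_{[t,\infty)}$ beyond what is already in $Z'|_{[0,t]}$. Specifically, by the conditional independence of $Z$ and $Z'$ given $(X,W)$ built into the coupling, combined with the fact (Lemma~\ref{prop:excursion-msrblty}) that $X|_{[0,t]}$ is $\sigma(W|_{[0,t]})$-measurable, the conditional law of $Z'$ given $Z|_{[0,t]}$ and $W$ depends only on $W$. Applying Lemma~\ref{prop:excursion-msrblty} to the pair $(Z,W)$ gives that $W|_{[t,\infty)}$ is conditionally independent of $Z|_{[0,t]}$ given $W|_{[0,t]}$, while $W|_{[0,t]}$ is itself $\sigma(Z|_{[0,t]})$-measurable. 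Integrating over $W$, these inputs together yield that the conditional law of $Z'$ given $Z|_{[0,t]}$ depends on $Z|_{[0,t]}$ only through $W|_{[0,t]}$. Since $W|_{[0,t]}$ is in turn $\sigma(Z'|_{[0,t]})$-measurable --- by Lemma~\ref{prop:excursion-msrblty} applied to the pair $(Z',W)$, which has the same joint law as $(Z,W)$ because $(Z',X) \eqD (Z,X)$ --- the conditional law of $(Z'-Z'_t)|_{[t,\infty)}$ given $\mcl F_t$ agrees with its conditional law given only $Z'|_{[0,t]}$. This last conditional law equals the law of $Z$ by Lemma~\ref{prop:limit-inc}, using that $\sigma(Z'|_{[0,t]}) = \sigma((Z',X)|_{[0,t]})$ (again by Lemma~\ref{prop:excursion-msrblty}).

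The identity for $Z$ is obtained by interchanging the roles of $Z$ and $Z'$ in the above; the only modification is that in the final step the role of Lemma~\ref{prop:limit-inc} is played by the Brownian Markov property of $Z$, using that $X|_{[0,t]}$ is $\sigma(Z|_{[0,t]})$-measurable. Combining the two identities yields $\E[Z_{t_2} - Z_{t_1} \mid \mcl F_{t_1}] = 0 = \E[Z'_{t_2} - Z'_{t_1} \mid \mcl F_{t_1}]$, from which the claimed martingale identity $\E[Z_{t_2} - Z'_{t_2} \mid \mcl F_{t_1}] = Z_{t_1} - Z'_{t_1}$ follows by subtraction.

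The argument is essentially a transcription of the proof of Proposition~\ref{prop:bm-determined}, with Lemma~\ref{prop:limit-inc} playing the role previously filled by the Brownian Markov property of $Z'$, and with the Markov property of $W$ relative to both $Z$ and $Z'$ (rather than just one) supplied by applying Lemma~\ref{prop:excursion-msrblty} to the two marginally identically distributed pairs $(Z,W)$ and $(Z',W)$. The main step to watch is therefore the redundancy claim; while each individual ingredient is elementary, carefully justifying the interchange of conditional laws in the integration over $W$ demands the same care as in the proof of Proposition~\ref{prop:bm-determined}.
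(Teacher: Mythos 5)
Your overall strategy --- reduce both conditional-law identities to statements of the form ``given $\mcl F_t$, the future of $Z$ (resp.\ $Z'$) sees the past only through something already determined by the relevant marginal'' --- is the right one, and your treatment of the pair $(Z,W)$ is fine. But there is a genuine circularity in how you handle $(Z',W)$. Twice you invoke Lemma~\ref{prop:excursion-msrblty} for the pair $(Z',W)$ (to get that $W|_{[0,t]}$ is $\sigma(Z'|_{[0,t]})$-measurable, and, in the ``symmetric argument'' for $Z$, to get the Markov property of $W$ relative to $Z'$), justifying this by ``$(Z',X)\eqD(Z,X)$''. That identity is precisely the conclusion of Proposition~\ref{prop:triple-conv}, which the present lemma is a step toward proving; at this stage Lemma~\ref{prop:subsequence-conv} only gives $Z'\eqD Z$ and $((Z'_s-Z'_{\wt\tau_s})_{s\geq 0},X)\eqD(W,X)$, which does not pin down the joint law of $(Z',X)$ or of $(Z',W)$. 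The paper flags exactly this issue in the first line of its proof.

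Both uses can be repaired without the circular claim, and this is what the paper does. For the $Z'$ identity one never needs $Z'|_{[0,t]}$ to determine $W|_{[0,t]}$: by Proposition~\ref{prop:bm-determined}, $Z|_{[0,t]}$ and $W|_{[0,t]}$ determine each other, so $\mcl F_t=\sigma((Z',W)|_{[0,t]})$, and this $\sigma$-algebra is contained in $\sigma((Z',X)|_{[0,t]})$ since $(Z',X)|_{[0,t]}$ determines $W|_{[0,t]}$; Lemma~\ref{prop:limit-inc} gives independence of $(Z'-Z'_t)|_{[t,\infty)}$ from the larger $\sigma$-algebra, hence from $\mcl F_t$. (Your final step likewise only needs the trivial inclusion $\sigma(Z'|_{[0,t]})\subseteq\sigma((Z',X)|_{[0,t]})$, not the equality you assert.) For the $Z$ identity, the Markov property of $W$ relative to $(Z',W)$ that you want to extract from Lemma~\ref{prop:excursion-msrblty} must instead be taken from Lemma~\ref{prop:Z'-markov}, which is proved directly from the discrete approximation for exactly this reason; with that substitution your ``integrate over $W$'' step goes through (the conditional law of $Z$ given $(Z',W)|_{[0,t]}$ depends only on $W|_{[0,t]}$, which is determined by $Z|_{[0,t]}$, and one concludes by the Markov property of $Z$). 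So the architecture is salvageable, but as written the key measurability claims rest on the statement being proved.
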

 \begin{proof}
 	The proof is similar to that of Lemma~\ref{prop:2bm-same-excursion}, but slightly more work is needed due to the fact that we do not know $(Z' , X)\eqD (Z,X)$. 
 	
 	By Proposition~\ref{prop:bm-determined}, $Z|_{[0,t]}$ and $W|_{[0,t]}$ a.s.\ determine each other, so the conditional law of $(Z' - Z'_t)|_{[t,\infty)}$ given $\mcl F_t$ is the same as its conditional law given $(Z' , W)|_{[0,t]}$.
 	By Lemma~\ref{prop:limit-inc}, the conditional law of $(Z' - Z_t')|_{[t,\infty)}$ given $(Z' , X)|_{[0,t]}$ is the same as the unconditional law of $Z'$, so since $W|_{[0,t]}$ is determined by $(Z' , X)|_{[0,t]}$, the conditional law of $(Z' - Z_t')|_{[t,\infty)}$ given $(Z' , W)|_{[0,t]}$ is the same as the unconditional law of $Z'$. By combining this with the above, we find that the conditional law of $(Z' - Z_t')|_{[t,\infty)}$ given $\mcl F_t$ is the same as the unconditional law of $Z'$, which is the same as the law of $Z$. 
 	
 	Next we will compute the conditional law of $(Z-Z_t)|_{[t,\infty)}$ given $\mcl F_t$ via a similar argument. 
 	By Lemma~\ref{prop:Z'-markov}, the conditional law of $W|_{[t,\infty)}$ given $(Z' ,W)|_{[0,t]}$ depends only on $W|_{[0,t]}$. By our choice of coupling, the conditional law of $Z$ given $Z'|_{[0,t]}$ and $W$ depends only on $W$. Hence the conditional law of $Z $ given $(Z' , W)|_{[0,t]}$ is the same as its conditional law given only $W|_{[0,t]}$. By Lemma~\ref{prop:excursion-msrblty}, $Z|_{[0,t]}$ a.s.\ determines $W|_{[0,t]}$, so the conditional law of $(Z - Z_t)|_{[t,\infty)}$ given $\mcl F_t$ is the same as its conditional law given only $Z|_{[0,t]}$, which by the Markov property is the same as the law of $Z$. 
 \end{proof}

 \begin{proof}[Proof of Proposition~\ref{prop:triple-conv}]
 	Lemma~\ref{prop:subsequence-conv} and the discussion immediately thereafter implies that for each sequence of positive integers tending to $\infty$, we can find a subsequence $m_k\rta \infty$ and a coupling $(Z , Z' , X)$ (equivalently, $(Z  , Z' , W)$) of $(Z, X)$ with another Brownian motion $Z' \eqD Z$ such that $Z_s - Z_{\wt\tau_s} = W_s$ for each $s \geq 0$, $Z$ and $Z'$ are conditionally independent given $W$, and $(Z^{m_k} , X^{m_k}   ) \rta (Z' ,   X  )$ in law. By Lemmas~\ref{prop:2bm-filtration} and~\ref{prop:2bm-same-excursion}, we a.s.\ have $Z = Z'$, so $(Z' , X) \eqD (Z , X)$. 
 \end{proof}

\section{Joint convergence of two Peano curves}
\label{sec:peano-conv}

In this section we will deduce the statement of our main theorem from Proposition~\ref{prop:triple-conv}. Before we can do so however, we need to deal with a certain technical difficulty. Recall the discrete space-filling path $\lambda'$ which traces all of the edges of the uniform infinite bipolar-oriented triangulation $(G, \cO,\BB e)$. Let $\wt\lambda'$ be the space-filling path associated with the dual bipolar-oriented map $(\wt G, \wt\cO,\wt{\BB e})$. We need to show that for $t \in \BB R$, it is unlikely that $\wt\lambda'$ takes more than $O_m(m )$ units of time to reach the edge of $\wt G$ which crosses $\lambda'(\lfloor t m \rfloor)$. This latter amount of time can be expressed in terms of the area between two pairs of discrete north-going and south-going flow lines. We will prove tightness of this area in Section~\ref{sec:merge-tight} using a combinatorial argument before proceeding to the proof of Theorem~\ref{thm1} in Section~\ref{sec:fourtrees-conv}.

\subsection{Tightness of the area between discrete flow lines}
\label{sec:merge-tight}

Let $(G , \cO,\BB e)$ be a uniform infinite bipolar-oriented triangulation and let $(\wt G , \wt\cO,\wt{\BB e})$ be its dual.  
Recall the definition of the space-filling exploration path $\lambda'$ on $G$ (which is a function from $\BB Z$ to the edge set of $G$) from Section~\ref{sec:bipolar-background}. 
For $i\in\BB Z$, let $\lambda_i^{E}$, $\lambda_i^{W}$, $\lambda_i^{N}$, and $\lambda_i^{S}$ be the discrete east-going, west-going, north-going, and south-going, respectively, flow lines (i.e.\ infinite tree branches) started from $\lambda'(i)$ (or the edge of $\wt G$ which crosses $\lambda'(i)$).  

For $i_1,i_2 \in\BB Z$ with $i_1 < i_2 $, let $F_{i_1,i_2}^N$ be the event that the edge of $\lambda_{i_1}^{N}$ which crosses the outer boundary of 
$\lambda'((-\infty ,i_2]_{\BB Z})$ lies on $\lambda_{i_2}^{E}$. Equivalently, $F_{i_1,i_2}^N$ 
is the event that the part of $\lambda_{i_1}^{N}$ not crossed by 
$\lambda'$ before time $i_2$ lies to the east of $\lambda_{i_2}^N$. 
Symmetrically, we let $F_{i_1,i_2}^S$ be the event that the edge of $\lambda_{i_2}^{S}$ which crosses the outer 
boundary of $\lambda'([i_1 ,\infty)_{\BB Z})$ crosses an edge of $\lambda_{i_2}^{E}$. Also let $\iota_{i_1,i_2}^{N}$ 
(resp.\ $\iota_{i_1,i_2}^S$) be the time at which $\lambda'$ 
crosses the edge at which $\lambda_0^{N}$ and $\lambda_i^{N}$ 
(resp.\ $\lambda_0^{S}$ and $\lambda_i^{S}$) merge. See Figure~\ref{fig-merge-walk} for an illustration of the event $F^N_{0,i}$. 

Suppose $i_1,i_2 \in\BB Z$ with $i_1 < i_2 $. If $F_{i_1,i_2}^N$ (resp.\ $(F_{i_1,i_2}^N)^c$) occurs,
let $M_{i_1,i_2}^N$ be the set of edges of $G$ which lie in the intersection of the west (resp.\ east) side of $\lambda_{i_1}^N$, the east (resp.\ west) side of $\lambda_{i_2}^N$, and $\lambda'([i_2,\infty)_{\BB Z})$, where here, as in Section \ref{sec:one-dual-tree}, we say that an edge is to the west (resp.\ east) of a flow line if both end-points (resp.\ at least one end-point) of the edge is to the west (resp.\ east) of the flow line. We define $M_{i_1,i_2}^S$ similarly but with the roles of $i_1$ and $i_2$ interchanged; ``$N$'' in place of ``$S$'', and $\lambda'([i_1,\infty)_{\BB Z})$ in place of $\lambda'((-\infty , i_2]_{\BB Z})$. For $i\in\BB Z$ we set $M_{i,i}^N = M_{i,i}^S = \emptyset$.

\begin{figure}[ht!]
	\begin{center}
		\includegraphics{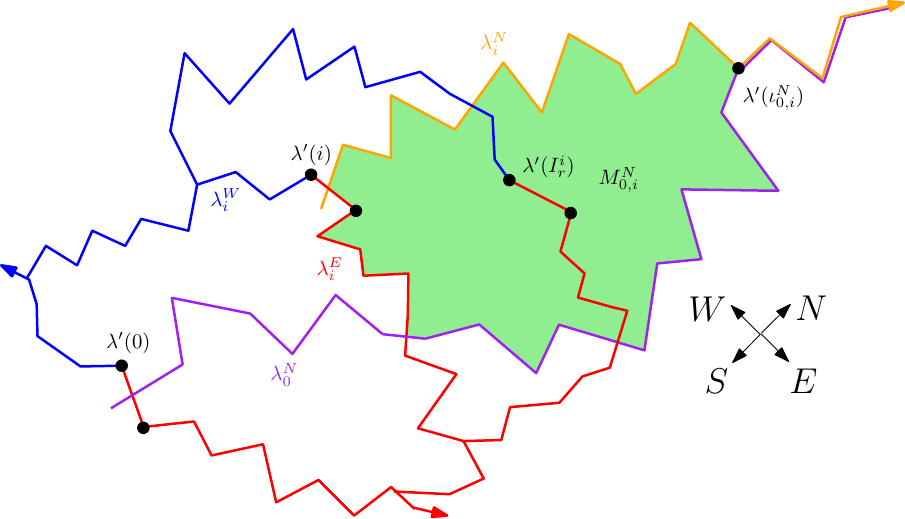} 
		\caption{An illustration of the setup and proof of Proposition~\ref{prop:merge-tight} on the event $F_{0,i}^N$. The picture on $(F_{0,i}^N)^c$ is similar, except that $\lambda_0^{N}$ crosses the left (blue) boundary of $\lambda'([0,i]_{\BB Z})$ rather than the right (red) boundary. The light-green region is bounded by the curve $\lambda_i^{E}$, $\lambda_i^{N}$, and $\lambda_0^{N}$, the latter two curves run up to the first time when they merge. It includes edges of the map $G$ which have both endpoints in the light-green region or which cross the west boundary of the green region.
			We want to prove an upper bound for the number of edges in $ M_{0,i}^N $. To this end, we consider the walk $\{\mcl Y^i_r\}_{r\in [ 0, \#M_{0,i}^N]_{\BB Z}}$ which is parameterized by the times $I_r^i$ for which $ I_r^i  \in M_{0,i}^N$ and which, at each time $r$, give the total number of edges of the outer boundary of $\lambda'((-\infty , I_r^i]_{\BB Z})$ which lie in $ M_{0,i}^N $. This walk starts from $d^i$, which is the distance between $\lambda_0^{N}$ and $\lambda'(i)$ along the outer boundary of $\lambda'((-\infty , i]_{\BB Z})$, and hits $-1$ for the first time when $\lambda'$ finishes filling in all of the edges of $ M_{0,i}^N $. We compute the law of this walk in Lemma~\ref{prop:walk-law}.}\label{fig-merge-walk}
	\end{center}
\end{figure}

In this subsection we will prove the following proposition, which is needed for the proof of Theorem~\ref{thm1}. 

\begin{prop} \label{prop:merge-tight}
	For each fixed $s_1,s_2 \in \BB R$ with $s_1 < s_2$, the laws of the random variables $\{m^{-1 } \# M_{\lfloor s_1 m \rfloor , \lfloor s_2 m\rfloor}^N\}_{m\in\BB N}$ and $\{m^{-1 } \# M_{\lfloor s_1 m \rfloor , \lfloor s_2 m\rfloor}^S\}_{m\in\BB N}$ are tight. 
\end{prop}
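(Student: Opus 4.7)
The plan is to reduce the bound to the case $s_1 = 0$ by re-rooting invariance of the uniform infinite bipolar-oriented triangulation $(G, \cO, \BB e)$, and then to analyze $\#M_{0, i}^N$ for $i = \lfloor s_2 m \rfloor - \lfloor s_1 m \rfloor$ by introducing a one-dimensional random walk $\mcl Y^i$ whose first hitting time of $-1$ equals $\#M_{0, i}^N$, as suggested in the caption of Figure~\ref{fig-merge-walk}.

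Explicitly, let $I_1^i < I_2^i < \cdots$ enumerate the times $k \ge i$ at which $\lambda'(k) \in M_{0, i}^N$, and set $\mcl Y_r^i$ to be the number of edges of $M_{0, i}^N$ lying on the outer boundary of $\lambda'((-\infty, I_r^i]_{\BB Z})$. Then $\mcl Y_0^i = d^i$ and $\#M_{0, i}^N$ is the first $r$ with $\mcl Y_r^i = -1$. The crux is a lemma identifying the step distribution of $\mcl Y^i$: by a case analysis based on the uniform $\{(1,0),(0,-1),(-1,1)\}$-distribution of the increments of $\mcl Z$, each step of $\mcl Y^i$ is either (i) a one-edge move shortening the boundary by one (when the $\mcl Z$-step is $(-1,1)$ and $\lambda'$ stays in $M_{0, i}^N$), or (ii) a face-traversal step whose effect on the boundary is determined by the two face sizes on either side (which for a triangulation are forced to be $(1,2)$ or $(2,1)$, corresponding to the two other $\mcl Z$-step types). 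The expected conclusion is that the increments of $\mcl Y^i$ are iid, bounded, and have mean $0$ (or negative drift) until the walk first reaches $-1$, with a positive variance $\sigma^2$.

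Next, I would show $d^i / \sqrt{m}$ is tight as $m \to \infty$ with $i \asymp m$. Since $d^i$ is bounded above by the length of a segment of the outer boundary of $\lambda'((-\infty, i]_{\BB Z})$, and the latter is dominated by the oscillations of $(\mcl L_j, \mcl R_j)_{0 \le j \le i}$, Donsker's invariance principle together with tightness of Brownian suprema on compact intervals gives the bound.

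Combining these two inputs, the optional stopping theorem applied to the martingale $(\mcl Y_r^i)^2 - \sigma^2 r$ conditionally on $d^i$ yields $\BB E[\#M_{0, i}^N \mid d^i] \le C (d^i)^2$, so $\#M_{0, i}^N / m$ is tight. Re-rooting at $\lambda'(\lfloor s_1 m \rfloor)$ then extends the bound to the pair $(\lfloor s_1 m \rfloor, \lfloor s_2 m \rfloor)$. The statement for $M^S$ is handled by the symmetric argument applied to $(G, \ol\cO, \BB e)$, which has the same law and swaps the roles of the north and south poles. I expect the main obstacle to be the combinatorial case analysis establishing that the steps of $\mcl Y^i$ are genuinely iid with the advertised bounded, centered (or negative-drift) law, in particular verifying that boundary effects near the flanking flow lines $\lambda_0^N$ and $\lambda_i^N$ do not introduce a positive drift that would spoil the quadratic hitting-time bound.
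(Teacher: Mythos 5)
Your setup is essentially the paper's: the same reduction to $s_1=0$ by translation/reversal symmetry, the same auxiliary walk $\mcl Y^i$ recording the number of boundary edges of $M_{0,i}^N$, the same identification of $\#M_{0,i}^N$ with the first hitting time of $-1$, and the same bound $d^i\leq |\mcl Z_i|$ giving tightness of $m^{-1/2}d^{\lfloor s m\rfloor}$. The paper's Lemma on the step law (its Lemma~\ref{prop:walk-law}) confirms your expectation: conditionally on $\mcl Z|_{(-\infty,i]}$, the increments of $\mcl Y^i$ are iid uniform on $\{-1,0,1\}$, hence mean zero with bounded steps. The case analysis you flag as the main obstacle is indeed the delicate part (one must track whether $\lambda'(I_r^i)$ touches $\lambda_i^N$ and/or $\lambda_0^N$, and check that a $(0,-1)$ step taken when it touches both kills the walk), but your overall plan there is sound.

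The genuine gap is in your last step. For a mean-zero walk with bounded increments started from $d^i\geq 0$, the first passage time $T$ to the level $-1$ has \emph{infinite} expectation: if $\BB E[T]<\infty$, optional stopping applied to the martingale $\mcl Y^i$ itself would give $-1=\BB E[\mcl Y^i_T]=d^i\geq 0$, a contradiction. Consequently no bound of the form $\BB E[\#M_{0,i}^N\mid d^i]\leq C(d^i)^2$ can hold, and the optional stopping argument with $(\mcl Y^i_r)^2-\sigma^2 r$ does not close: $\sigma^2\BB E[T\wedge r]=\BB E[(\mcl Y^i_{T\wedge r})^2]-(d^i)^2$, and the right-hand side is not bounded in $r$ because the walk can wander arbitrarily high before dying. (Your parenthetical ``or negative drift'' would rescue the first-moment bound, but the drift is exactly zero.) The repair is to bound tail probabilities rather than expectations: on the event $\{d^{\lfloor s m\rfloor}\leq \wt C_\ep m^{1/2}\}$, the conditional probability that a mean-zero walk started from $d^{\lfloor s m\rfloor}$ takes more than $C_\ep m$ steps to reach $-1$ is, by Donsker's theorem (or the first-passage estimate $\BB P[T>n]\asymp (d+1)/\sqrt n$), at most $\ep$ once $C_\ep$ is chosen large enough depending on $\wt C_\ep$. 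Combined with $\BB P[d^{\lfloor s m\rfloor}>\wt C_\ep m^{1/2}]\leq\ep$ this gives $\BB P[\#M_{0,\lfloor s m\rfloor}^N>C_\ep m]\leq 2\ep$, which is the tightness you want; this is exactly how the paper concludes.
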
 

The reason why we need Proposition~\ref{prop:merge-tight} is as follows. If $\wt\lambda'$ denotes the space-filling exploration path of the dual map $\wt G$, as defined in Section~\ref{sec:bipolar-infinite}, normalized so that $\wt\lambda'(0)$ is the edge which crosses $\lambda'(0)$, then the time between 0 and when $\wt\lambda'$ hits $\lambda'(i)$ is equal to the number of edges in the region disconnected from $\infty$ by the flow lines $\lambda_0^{N}$, $\lambda_0^{S}$, $\lambda_i^{N}$, and $\lambda_i^{S}$. Proposition~\ref{prop:merge-tight} will allow us to show that $\wt\lambda'$ is unlikely to take more than $O_m(m )$ units of time to hit $\lambda'(\lfloor s m\rfloor)$.

By forward/reverse symmetry of the law of $\lambda'$ and translation invariance, to prove Proposition~\ref{prop:merge-tight} it will suffice to prove tightness of $\{m^{-1 } \# M_{0 , \lfloor s   m\rfloor}^N\}_{m\in\BB N}$ for fixed $s>0$. We will do this by bounding $\# M_{0,i}^N$ in terms of the first time that a certain random walk hits 0. 
To this end, define $N_k^{i, E}$ and $N_k^{i, W}$ for $i , k \in\BB N$ in the same manner as the times $N_k^E$ and $N_k^W$ of Section~\ref{sec:excursions} but starting from $i$ instead of from 0, so that $N_k^{i,W}$ and $N_k^{i,E}$ are the times when $\lambda'$ crosses $\lambda_i^N$. Note that in this notation we have $N_k^E  = N_k^{0,E}$ and $N_k^W = N_k^{0,W}$.  
For $r \in \BB N_0$, let
\eqbn
I^i_r := \inf\left\{j \in [i  ,\infty)_{\BB Z} \,:\, \#\left( [i+1 , j]_{\BB Z} \cap (\lambda')^{-1}( M_{0,i}^N ) \right)  = r \right\}  
\eqen
be the $r$th smallest time $j\in\BB N$ at which $\lambda'(j) \in M_{0,i}^N$. 
Define
\eqbn
n_r^i :=  
\sup\left\{ j \in [i , I_r^i]_{\BB Z}  \,:\, j = N_k^{i,q} \: \text{for some $k\in\BB N$}  \right\}  ,
\eqen
where $q =E$ if $F_{0,i}^N$ occurs and $q = W$ if $(F_{0,i}^N)^c$ occurs. Also define
\eqbn
\wt n_r^i := \sup\left\{ j \in [0 , I_r^i]_{\BB Z}  \,:\, j = N_k^{0,\wt q} \: \text{for some $k\in\BB N$}  \right\}  ,
\eqen
where $\wt q =W$ if $F_{0,i}^N$ occurs and $\wt q = E$ if $(F_{0,i}^N)^c$ occurs. Since each edge $\lambda'(I_r^i)$ belongs to $M_{0,i}^N$, $n_r^i$ (resp.\ $\wt n_r^i$) is the last time before $i$ at which $\lambda'$ crosses $\lambda_i^{N}$ (resp.\ $\lambda_0^{N}$). 
If $I_r^i < \infty$, let
\eqbn
\mcl Y_r^i := 
\begin{cases}
	&\mcl L_{I_r^i} - \mcl L_{n_r^i}  +\mcl R_{I_r^i} - \mcl R_{\wt n_r^i}  ,\quad \text{if $F_{0,i}^N$ occurs}\\
	&\mcl R_{I_r^i} - \mcl R_{n_r^i}  +\mcl L_{I_r^i} - \mcl L_{\wt n_r^i} ,\quad \text{if $(F_{0,i}^N)^c$ occurs}\\
\end{cases}
\eqen
and otherwise let $\mcl Y_r^i = -1$. 
Then $\mcl Y_r^i$ is equal to the total number of edges on the outer boundary of $\lambda'(I_r^i)$ which are contained in $M_{0,i}^N$.  

Let $d^i$ be the number of edges on the outer boundary of $\lambda'((-\infty ,i]_{\BB Z})$ between $\lambda'(i)$ and the edge on the outer boundary of $\lambda'((-\infty ,i]_{\BB Z})$ crossed by $\lambda_0^{N}$.

\begin{lem} \label{prop:walk-law}
	The conditional law of $\mcl Y^i$ given $\mcl Z|_{(-\infty , i]}$ is that of a random walk with iid increments, each uniformly distributed on $\{-1,0,1\}$, started from $d^i$ and stopped at the first time it hits $-1$. In particular, the conditional law of $\# M_{0,i}^N$ given $\mcl Z|_{(-\infty , i]}$ is that of the first time  such a random walk hits $-1$. 
\end{lem}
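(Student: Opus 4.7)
The plan is to condition on $\mcl Z|_{(-\infty,i]_{\BB Z}}$ so that, by the strong Markov property, the subsequent increments of $\mcl Z$ are i.i.d.\ uniform on $\{(1,0),(0,-1),(-1,1)\}$ while $M_{0,i}^N$ and the event $F_{0,i}^N$ are determined by the past, and then to analyze the evolution of $\mcl Y^i$ one edge at a time as $\lambda'$ fills $M_{0,i}^N$. The case $(F_{0,i}^N)^c$ is symmetric to $F_{0,i}^N$, so I focus on $F_{0,i}^N$. First I would verify the base case $\mcl Y_0^i = d^i$: the convention $N_1^{i,E}=i$ gives $n_0^i = i$, so $\mcl L_i - \mcl L_{n_0^i}=0$, while $\wt n_0^i$ picks out the last time $\leq i$ at which $\lambda'$ was at a west-excursion-starting edge for $\lambda_0^N$, so that $\mcl R_i - \mcl R_{\wt n_0^i}$ counts the edges along the east-going flow line $\lambda_i^E$ between $\lambda'(i)$ and the edge through which $\lambda_0^N$ exits the right frontier of $\lambda'((-\infty,i]_{\BB Z})$, namely $d^i$.

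The key geometric claim is that, once $\lambda'$ enters $M_{0,i}^N$, it fills the region consecutively without crossing either $\lambda_i^N$ or $\lambda_0^N$ while its tip is in the interior of $M_{0,i}^N$, and exits only at the (unique) dual vertex where $\lambda_i^N$ and $\lambda_0^N$ merge. Equivalently, $I_{r+1}^i = I_r^i+1$ throughout the exploration phase, and both $n_r^i$ and $\wt n_r^i$ are constant during this phase. Granting this, the increment telescopes to
\[
\mcl Y_{r+1}^i - \mcl Y_r^i = (\mcl L_{I_{r+1}^i} - \mcl L_{I_r^i}) + (\mcl R_{I_{r+1}^i} - \mcl R_{I_r^i}),
\]
which equals the sum of the two components of the next step of $\mcl Z$: $+1$ for $(1,0)$, $-1$ for $(0,-1)$, and $0$ for $(-1,1)$, each occurring with probability $1/3$ independently of the past. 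When $\lambda'$ exhausts $M_{0,i}^N$ the boundary length inside $M_{0,i}^N$ is zero and the exiting step sends $\mcl Y^i$ to $-1$, matching the convention $\mcl Y_r^i=-1$ whenever $I_r^i=\infty$; equivalently, the walk first hits $-1$ exactly when the exploration of $M_{0,i}^N$ is complete.

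The main obstacle is the geometric containment claim. I would prove it via a local case analysis of the KMSW sewing procedure: at an interior edge of $M_{0,i}^N$, each of the three step-types of $\mcl Z$ corresponds to a local operation on the map that cannot carry the tip of $\lambda'$ across either of the bounding dual flow lines. Combined with the tree-theoretic fact that $\lambda_i^N$ and $\lambda_0^N$ are both branches of the dual north-going tree going to the NE pole and therefore merge at a single dual vertex above $M_{0,i}^N$, this forces $\lambda'$ to fill $M_{0,i}^N$ as a single uninterrupted excursion, with crossings of $\lambda_i^N$ and $\lambda_0^N$ occurring only at the first entry into and final exit from the strip.
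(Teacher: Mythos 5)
There is a genuine gap, and it sits exactly at the step you flag as "the main obstacle": the geometric containment claim is false. On $F_{0,i}^N$ the path $\lambda'$ does \emph{not} fill $M_{0,i}^N$ as a single uninterrupted excursion. Whenever $\lambda'(I_r^i)$ is an edge crossing $\lambda_i^{N}$ (which, by Lemma~\ref{prop:discrete-excursions}, happens at every time of the form $N_k^{i,E}$, and such edges do belong to $M_{0,i}^N$ under the paper's asymmetric east/west convention), the next step equals $(-1,1)$ with probability $1/3$; in that case $\lambda'$ enters the region strictly to the west of $\lambda_i^{N}$ and fills an entire subtree of the east-going tree there before recrossing $\lambda_i^{N}$ at time $N_{k+1}^{i,E}$. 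None of those detour edges lie in $M_{0,i}^N$, so $I_{r+1}^i = N_{k+1}^{i,E} > I_r^i+1$, and moreover $n_{r+1}^i \neq n_r^i$. A symmetric excursion to the east of $\lambda_0^{N}$ occurs from edges adjacent to the $\lambda_0^N$-boundary. So both assertions underpinning your telescoping identity ($I_{r+1}^i=I_r^i+1$ and constancy of $n_r^i,\wt n_r^i$) fail, and the identity $\mcl Y_{r+1}^i-\mcl Y_r^i = (\mcl L_{I_{r+1}^i}-\mcl L_{I_r^i})+(\mcl R_{I_{r+1}^i}-\mcl R_{I_r^i})$ does not hold across a detour. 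This is precisely why $\mcl Y_r^i$ is defined with the floating reference times $n_r^i$ and $\wt n_r^i$ rather than with the fixed times $i$ and $\wt n_0^i$.

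The correct argument is a four-case analysis according to whether $\mcl L_{I_r^i}-\mcl L_{n_r^i}$ and $\mcl R_{I_r^i}-\mcl R_{\wt n_r^i}$ are zero or nonzero, i.e.\ whether $\lambda'(I_r^i)$ sits in the interior of the strip or abuts one (or both) of the bounding flow lines. In the interior case your computation is right and the increment is uniform on $\{-1,0,1\}$. In the boundary cases one must check, using the renewal structure of Lemma~\ref{prop:discrete-excursions}, that after a detour across $\lambda_i^{N}$ (resp.\ $\lambda_0^{N}$) the quantities defining $\mcl Y^i$ return to values making the increment $0$ (resp.\ $-1$), so that the increment is again uniform on $\{-1,0,1\}$; and in the doubly-boundary case the step $(0,-1)$ forces $\lambda'$ to cross both flow lines simultaneously (at and beyond their merge point), giving $I_{r+1}^i=\infty$ and $\mcl Y_{r+1}^i=-1$, which is how the walk actually terminates. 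Your claim that the exploration "exits only at the merge point" gestures at this last fact, but without the case analysis the i.i.d.\ uniform increment structure — the whole content of the lemma — is not established.
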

\begin{proof}
	Let $\{\mcl F_j\}_{j\in\BB Z}$ be the filtration generated by $\mcl Z$. It follows from Lemma~\ref{prop:discrete-excursions} that each of the times $I_r^i$ is a stopping time for $\{\mcl F_j\}_{j\in\BB Z}$. Furthermore, the event $F_{0,i}^N$ is measurable with respect to $\mcl F_i$ (it is determined by the sign of $\mcl X_i$ with $\mcl X$ as in Lemma~\ref{prop:discrete-excursions}). Consequently, $\mcl Y^i$ is adapted to the filtration $\{\mcl F_{I_r^i}\}_{r\in\BB N_0}$. 
	
	We will prove that the conditional law of $\mcl Y_r^i$ given $\mcl F_i$ is as described on the event $F_{0,i}^N$. The case when $(F_{0,i}^N)^c$ occurs is treated similarly. Note that on $F_{0,i}^N$, we have 
	\eqbn
	d^i = \mcl R_i - \mcl R_{n_0^i} = \mcl Y_0^i .
	\eqen
	If we condition on a realization of $\mcl F_{I_r^i}$ for $r\in\BB N$ such that $F_{0,i}^N$ occurs, then the conditional law of $\mcl Z^i_{I_r^i + 1} - \mcl Z^i_{I_r^i}$ is uniform on $\{(1,0) , (0,-1) , (-1,1)  \}$. To determine the conditional law of $\mcl Y_{r+1}^i -\mcl Y_r^i$ we consider four cases, according to whether $\mcl L^i_{I_r^i} - \mcl L^i_{n_r^i}$ (resp.\ $\mcl R_{I_r^i} - \mcl R_{\wt n_r^i}$) is zero or non-zero (equivalently whether the edge $\lambda'(I_r^i)$ is or is not crossed by an edge of $\lambda_i^{N}$ and whether $\lambda'(I_r^i)$ is or is not adjacent to an edge of $\lambda'((-\infty , i]_{\BB Z})$ which is crossed by an edge of $\lambda_0^{N}$). In the case when both quantities are non-zero, $I_{r+1}^i = I_r^i + 1$, and it is clear that the conditional law of $\mcl Y_{r+1}^i - \mcl Y_r^i$ is uniform on $\{-1,0,1\}$. In the two cases when one coordinate is zero but the other is not, it holds with conditional probability $1/3$ that $\lambda'(I_r^i + 1) \notin  M_{0,i}^N$, in which case $I_{r+1}^i$ is equal to the next time after $I_r^i+1$ of the form $N_k^{i,W}$ or $N_k^{0,E}$ for $k\in\BB N$ (depending on which of the two quantities is zero). It is easy to see from Lemma~\ref{prop:discrete-excursions} that the conditional law of $\mcl Y_{r+1}^i - \mcl Y_r^i$ is again uniform on $\{-1,0,1\}$ in each of these two cases. 
	
	In the case when $\mcl L^i_{I_r^i} - \mcl L^i_{n_r^i} = \mcl R_{I_r^i} - \mcl R_{\wt n_r^i} = 0$, we have $I_r^i = I_{r+1}^i$ and $\mcl Y_{r+1}^i - \mcl Y_r^i = 1$ if $\mcl Z^i_{I_r^i + 1} - \mcl Z^i_{I_r^i} = (1,0)$ and $\lambda'(I_r^i + 1) \notin M_{0,i}^N$ otherwise. If $\mcl Z^i_{I_r^i + 1} - \mcl Z^i_{I_r^i} = (-1,1)$, then it follows from Lemma~\ref{prop:discrete-excursions} (recall the asymmetry in the definitions of $N_k^{0,W}$ and $N_k^{0,E}$) that $\lambda'$ enters the region strictly to the left of $\lambda_i^N$ at time $I_r^i + 1$, the time $I_{r+1}^i$ is finite, the edge $\lambda'(I_{r+1}^i)$ does not cross $\lambda_0^N$, and $\mcl Y_{r+1}^i - \mcl Y_r = 0$. If $\mcl Z^i_{I_r^i + 1} - \mcl Z^i_{I_r^i} = (0,-1)$, then the edge $\lambda'(I_r^i + 1)$ crosses the flow line $\lambda_0^N$. Furthermore, at the next time after $I_r^i+1$ when $\lambda'$ lies to the west of $\lambda_0^N$, it also lies to the west of $\lambda_i^N$. In other words, $\lambda'$ crosses both flow lines simultaneously before re-entering $M_{0,i}^N$, which implies $I_{r+1}^i = \infty$ and $\mcl Y_{r+1}^i - \mcl Y_r^i = -1$. 
\end{proof}

\begin{proof}[Proof of Proposition~\ref{prop:merge-tight}]
	By symmetry we only need to consider the sets $M_{0,\lfloor s m \rfloor}^N$ for $m\in\BB N$ when $s > 0$. Henceforth fix such an $s$ and an $\ep > 0$. For $m\in\BB N$, let $d^{\lfloor s m\rfloor}$ be as defined just above Lemma~\ref{prop:walk-law} with $i = \lfloor s m \rfloor$. Then $d^{\lfloor s m \rfloor}$ is at most the number of edges which lie on the outer boundary of $\lambda'((-\infty , \lfloor s m \rfloor]_{\BB Z})$ but not on the outer boundary of $\lambda'((-\infty , 0]_{\BB Z})$. Therefore $d^{\lfloor s m \rfloor} \leq |\mcl Z_{\lfloor s m\rfloor} |$. Since $m^{-1/2} |\mcl Z_{\lfloor s m \rfloor} |$ converges in law to $|Z_s|$, we can find a constant $  \wt C_\ep  > 0$ (depending on $\ep$ and $s$ but not on $m$) such that $\BB P\left[d^{\lfloor s m \rfloor} > \wt C_\ep m^{1/2} \right] \leq \ep$ for each $m\in\BB N$. By Lemma~\ref{prop:walk-law}, on the event $\{d^{\lfloor s m \rfloor} \leq \wt C_\ep m^{1/2}\}$, the conditional law of $\# M_{0,\lfloor s m \rfloor}^N$ given $\mcl Z|_{(-\infty , \lfloor s m \rfloor]}$ is that of the first time that a random walk with iid steps uniformly distributed on $\{-1,0,1\}$ started from 0 hits $- \wt C_\ep m^{1/2}-1$. By Donsker's theorem, we can find $C_\ep > 0$ depending only on $\wt C_\ep$ such that the probability that this hitting time is $> C_\ep m$ is at most $\ep$. Consequently, 
	\eqbn
	\BB P\left[ \# M_{0,\lfloor s m \rfloor}^N > C_\ep m \right] \leq 2 \ep .
	\eqen
	This proves the desired tightness.
\end{proof}

\subsection{Convergence of two Peano curves}
\label{sec:fourtrees-conv}

Recall the process $\cl$ from Section \ref{sec:excursions}. For $m\in\N$ let $\ell^m=(\ell^m_t)_{t\geq 0}$ be the renormalized version of $\cl$, which we define as follows for discrete times 
\eqb
\ell^m_{t}=(3\alpha/2)^{1/2}m^{-1/2}\cl_{tm},\qquad tm\in\Nz,
\label{eqn:ellm}
\eqe
and by linear interpolation for $tm\not\in\Nz$, where $\alpha>0$ is as in~\eqref{eqn:correl}. Recall also the definition of $X^m$ from~\eqref{eqn:rescaled-X} and the definition of $X$ from~\eqref{eqn:signed-bm}. Let $\ell$ be the local time of $X$ at 0, as in Section~\ref{sec:continuum-decomp}. 

\begin{lem}
	In the notation above, the following convergence holds in law for the topology of uniform convergence on compact sets:
	\eqbn
	(X^m,\ell^m) \rtaD \left(X,\frac 32 \ell\right).
	\eqen 
	\label{prop10}
\end{lem}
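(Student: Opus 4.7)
The plan is to invoke the classical invariance principle for the joint convergence of a random walk and its local time at $0$, and then to pin down the constant $3/2$ by matching first moments. By Lemma~\ref{prop:discrete-excursions} the walk $\cX$ is an aperiodic integer-valued random walk with iid mean-zero increments uniformly distributed on $\{-1,0,1\}$, so in particular with per-step variance $\sigma^2 = 2/3$, and $\cl_n = \#\{0 \le k \le n : \cX_k = 0\}$ is its occupation count at $0$. Proposition~\ref{prop:triple-conv} already supplies $X^m \to X$ in law, so all that is missing is the joint convergence with $\ell^m$.

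Next I would appeal to the standard invariance principle for occupation counts of aperiodic integer-valued random walks at $0$ (see e.g.\ Borodin or Perkins), which asserts joint convergence in law, for the topology of uniform convergence on compact sets,
\[
\bigl(m^{-1/2}\cX_{\lfloor\cdot m\rfloor},\, m^{-1/2}\cl_{\lfloor\cdot m\rfloor}\bigr) \rtaD \bigl(\sigma B,\, c\, L^0(\sigma B)\bigr),
\]
where $B$ is a standard Brownian motion and $L^0$ denotes the Brownian local time at $0$ in the Tanaka normalization. The constant $c$ is then pinned down by taking expectations: the local CLT gives $\BB P[\cX_n = 0] \sim (\sigma\sqrt{2\pi n})^{-1}$, hence $\BB E[\cl_n] \sim \sqrt{2n/(\pi\sigma^2)}$, whereas Tanaka's formula gives $\BB E[L^0_t(\sigma B)] = \sigma\sqrt{2t/\pi}$; comparing forces $c = 1/\sigma^2 = 3/2$.

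To finish I would multiply both coordinates by $(3\alpha/2)^{1/2}$ and apply the scaling identity $L^0_t(\lambda W) = \lambda L^0_t(W)$ for $\lambda > 0$, which is immediate from applying Tanaka's formula to $\lambda W$. This yields $X^m \rtaD X$ with $X = \sqrt{\alpha}\,B$, and
\[
\ell^m_t \rtaD \tfrac{3}{2}(3\alpha/2)^{1/2}\, L^0_t(\sigma B) = \tfrac{3}{2}\sqrt{\alpha}\, L^0_t(B) = \tfrac{3}{2}\, L^0_t(X).
\]
Since $\theta = 0$ here, the remark immediately following Proposition~\ref{prop:ppp} identifies $\ell_t$ with $L^0_t(X)$, so the limit is exactly $\tfrac{3}{2}\ell_t$, as required. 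The main obstacle is really just bookkeeping: one has to invoke a form of the invariance principle strong enough to give joint convergence with the walk itself (marginal convergence of $\ell^m$ alone would not suffice), and to carefully reconcile the several different normalization conventions for local time used throughout Section~\ref{sec:continuum-decomp} when extracting the constant $3/2$.
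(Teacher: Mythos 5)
Your proposal is correct and follows essentially the same route as the paper: both rest on Borodin's joint invariance principle for an aperiodic mean-zero walk and its occupation count at $0$, with the factor $3/2=1/\sigma^2$ arising from reconciling the occupation-time normalization with the semimartingale local time of $X$ (the paper phrases this as a difference of local-time conventions, while you extract the same constant via the local CLT and Tanaka's formula). The final identification of $\ell$ with $L^0(X)$ for $\theta=0$ is also exactly how the paper closes the argument.
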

\begin{proof}
	Define $\wh X^m:=a^{-1} X^m$ and $\wh \ell^m:=a^{-1} \ell^m$, where $a:=(3\alpha/2)^{1/2}$. Then $\wh X^m$ converges in law to $a^{-1} X$, which has the law of a standard Brownian motion multiplied by $(2/3)^{1/2}$. By \cite[Theorem 1.1]{bor81-loctime} the pair $(\wh X^m,\wh \ell^m)$ converges in law towards $(a^{-1} X, \frac 32 \ell')$, where $\ell'$ is the local time at zero of $a^{-1}X$, and the factor $\frac 32$ is added since \cite{bor81-loctime} uses another definition of local time than us: Given a process $Y$ with the law of a constant multiple of a standard Brownian motion, Borodin defines its local time at zero by $t\mapsto \lim_{\ep\rta 0}\int_0^t \1_{0\leq Y_s< \ep}\,ds$, while we use the definition applied in e.g.\ \cite{revuz-yor}, i.e.\ $t\mapsto \lim_{\ep\rta 0}\int_0^t \1_{0\leq Y_s< \ep}\,d\langle Y,Y \rangle_s$. With our definition of local time $\ell'=a^{-1}\ell$. It follows that $(X^m,\ell^m)=(a\wh X^m,a\wh \ell^m)$ converges in law to $(X,\frac 32 \ell)$.
\end{proof}

For $q\in\Q$ define $X^q$ and $\ell^q$ just as we defined $X$ and $\ell$, except that we consider the recentered quantum cone and SLE pair $(h(\cdot+\eta'(q)), \eta'(\cdot+q)-\eta'(q))$ in place of $(h ,\eta')$. The pair $(X^q,\ell^q)$ is equal in law to $(X,\ell)$ by recentering invariance of the SLE-decorated quantum cone~\cite[Theorem 1.9]{wedges}, and the pair encodes information about the north-going flow line started from $\eta'(q)$ of the whole-plane GFF used to generate $\eta'$. We extend the processes $X^q$ and $\ell^q$ to negative times by letting $(X^q_{-t})_{t\geq 0}$ and $(\ell^q_{-t})_{t\geq 0}$ be the analogues of $X^q$ and $\ell^q$ for the south-going flow line started from $\eta'(q)$. More precisely, $(X^q_{-t},\ell^q_{-t})_{t\geq 0}=(\wh X^{-q}_{t},\wh\ell^{-q}_{t})_{t\geq 0}$, where $(X^{\wh q},\wh \ell^{\wh q})$ for $\wh q\in\Q$ is defined such that $(\wh L,\wh R,\wh X^{\wh q},\wh \ell^{\wh q})\eqD(L,R,X^{\wh q},\ell^{\wh q})$ for $(\wh L,\wh R):=(L_{-t},R_{-t})_{t\in\R}$.

For $m\in\N$ and $q\in\Q$ define $\cX^{q,m}=(\cX_n^{q,m})_{n\in\N}$ (resp., $\cl^{q,m}=(\cl_n^{q,m})_{n\in\N}$, $X^{q,m}=(X_t^{q,m})_{t\geq 0}$, $\ell^{q,m}=(\ell_t^{q,m})_{t\geq 0}$) just as we defined $\cX$ (resp., $\cl$, $X^m$, $\ell^m$), except that we consider the bipolar-oriented map encoded by the recentered random walk $(Z_{n+\lfloor m q \rfloor})_{n\in\N}$. Note that the law of $\cX^{q,m}$ (resp., $\cl^{q,m}$, $X^{q,m}$, $\ell^{q,m}$) is independent of the value of $q$ by invariance in law of the infinite volume bipolar-oriented map under recentering. We extend the definitions of $\cX^{q,m},\cl^{q,m},X^q,$ and $\ell^q$ to negative times by letting $(\cX^{q,m}_{-n})_{n\in\BB N_0}$ and $(\cl^{q,m}_{-n})_{n\in\BB N_0}$ be the analogues of $\cX^{q,m}$ and $\cl^{q,m}$ with the discrete south-going flow line started from $\lambda'(\lfloor m q \rfloor)$ in place of the discrete north-going flow line started from $\lambda'(0)$.

\begin{prop}
	With the notation introduced above the following convergence holds in law for the topology of uniform convergence on compact intervals for every finite collection of times $q\in \BB Q$
	\eqb
	\begin{split}
		(Z^m,(X^{q,m})_{q\in\Q},(\ell^{q,m})_{q\in\Q}) \rtaD 
		\left(Z,(X^q)_{q\in\Q},\left(\frac 32 \ell^q\right)_{q\in\Q}\right).
	\end{split}
	\label{fourtrees-eq1}
	\eqe
	\label{prop:multiplefl}
\end{prop}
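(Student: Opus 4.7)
The plan is to bootstrap from the single-rooted convergence given by Proposition~\ref{prop:triple-conv} and Lemma~\ref{prop10} to joint convergence over any finite family of marked times in $\BB Q$, the key point being that each $X^q$ and $\ell^q$ is a deterministic measurable functional of $Z$ in the continuum.

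First I would upgrade Proposition~\ref{prop:triple-conv} and Lemma~\ref{prop10} to the joint statement $(Z^m, X^m, \ell^m) \rtaD (Z, X, \tfrac 32 \ell)$ at a single rooting. The tuple is tight (each coordinate is), and for any subsequential limit $(Z, X', \ell')$, the $(Z, X')$ marginal agrees with $(Z, X)$ by Proposition~\ref{prop:triple-conv}, while the $(X', \ell')$ marginal agrees with $(X, \tfrac 32 \ell)$ by Lemma~\ref{prop10}. Since $\ell^m$ converges as a continuous functional of the one-dimensional path to a constant multiple of Brownian local time at $0$, $\ell'$ is almost surely the corresponding local-time functional of $X'$, and the two marginal identifications force $(Z, X', \ell') \eqD (Z, X, \tfrac 32 \ell)$. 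Combining this with re-rooting invariance of the uniform infinite bipolar-oriented triangulation on the discrete side and of the $\sqrt{4/3}$-quantum cone decorated by independent space-filling $\SLE_{12}$ on the continuum side (\cite[Lemma 9.3]{wedges}), we obtain, for each individual $q \in \BB Q$, the convergence
\[
\bigl(Z^m,\, X^{q,m},\, \ell^{q,m}\bigr) \rtaD \bigl(Z,\, X^q,\, \tfrac 32 \ell^q\bigr).
\]

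The main step is promoting this to joint convergence over a finite collection $q_1, \dots, q_k \in \BB Q$. The tuple $(Z^m, X^{q_1, m}, \ell^{q_1, m}, \dots, X^{q_k, m}, \ell^{q_k, m})$ is tight, so it suffices to show that any subsequential limit $(Z, Y^{q_1}, M^{q_1}, \dots, Y^{q_k}, M^{q_k})$ coincides in law with the desired object. By the pairwise convergence above, $(Z, Y^{q_j}, M^{q_j}) \eqD (Z, X^{q_j}, \tfrac 32 \ell^{q_j})$ for each $j$. The crucial observation is that, for each $q$, both $X^q$ and $\ell^q$ are almost surely deterministic measurable functions of $Z$: indeed $Z$ determines the $\sqrt{4/3}$-quantum cone decorated by $\eta'$ up to a global rotation by \cite[Theorem 1.14]{wedges}, $\eta'$ determines the whole-plane GFF $\wh h$ by \cite[Theorem 1.16]{ig4}, and then $X^q$ (respectively $\ell^q$) is read off from the crossing times of $\eta'$ with the $0$-angle flow line of $\wh h$ started from $\eta'(q)$ as in Proposition~\ref{prop:ppp} (respectively Proposition~\ref{prop:local-time}). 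The rotation ambiguity is harmless because a global rotation rotates $\eta'$ and the $0$-angle flow line by the same angle, leaving the parametrized crossing set and the signed length process unchanged. Hence $Y^{q_j} = X^{q_j}$ and $M^{q_j} = \tfrac 32 \ell^{q_j}$ almost surely, the subsequential limit is unique, and the original sequence converges.

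The extension to negative times, where $X^q$ and $\ell^q$ encode the south-going flow line, is handled by the same argument applied to the time-reversed walk, using the distributional identity $(\wh L, \wh R, \wh X^{\wh q}, \wh \ell^{\wh q}) \eqD (L, R, X^{\wh q}, \ell^{\wh q})$ noted just above the proposition. I expect the main technical obstacle to be the careful justification that $X^q$ and $\ell^q$ depend measurably on $Z$ (handling the rotation ambiguity in the reconstruction $Z \mapsto (h, \eta')$) together with the continuous-mapping step identifying $\ell'$ with the local-time functional of $X'$ in the joint topology.
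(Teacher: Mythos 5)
Your proposal is correct and follows essentially the same route as the paper: tightness, Skorokhod, identification of each pair $(\wh X^q,\wh\ell^q)$ via re-rooting invariance together with Proposition~\ref{prop:triple-conv} and Lemma~\ref{prop10}, and then the key observation that $X^q$ and $\ell^q$ are a.s.\ determined by $Z$, which forces every subsequential limit to coincide with the desired one. One small wording caveat: local time is not a \emph{continuous} functional of the path, but your actual argument (that the marginal law $(X',\ell')\eqD(X,\tfrac32\ell)$ transfers the a.s.\ functional relation $\ell'=\tfrac32\,\mathrm{loc}(X')$ to the limit) is exactly the paper's step and is sound.
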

\begin{proof}
	Define $Y^m:=(Z^m,(X^{q,m})_{q\in\Q},(\ell^{q,m})_{q\in\Q})$ to be the random infinite collection of functions on the left-hand side of \eqref{fourtrees-eq1}. The laws of $Y^m$ for $m\in\N$ are tight: tightness of the laws of $Z^m$ and $X^{q,m}$ follows from Donsker's theorem; tightness of the laws of $\ell^{q,m}$ follows from Proposition \ref{prop10} and invariance under re-rooting of the law of the infinite volume bipolar-oriented random planar map. Consider a subsequence along which $Y^m$ converges in law. By the Skorokhod representation theorem we may choose a coupling such that the convergence along this subsequence is almost sure, i.e., we can find random variables such that the following convergence holds almost surely along this subsequence for the topology of uniform convergence on compact intervals for every finite collection of times $q\in \BB Q$:
	\eqb
	(Z^m,(X^{q,m})_{q\in\Q},(\ell^{q,m})_{q\in\Q})
	\rta
	\left(Z,(\wh X^{q})_{q\in\Q},\left(\frac 32 \wh\ell^{q}\right)_{q\in\Q}\right).
	\eqe
	Recall from Section~\ref{sec:continuum-decomp} that the random variables $X^q,\ell^q$ for $q\in\Q$ are a.s.\ determined by $Z$. We need to show that $\wh X^q=X^q$ and $\wh\ell^q=\ell^q$ a.s.\ for every $q\in\Q$. This result holds for $q=0$ by Propositions \ref{prop:triple-conv} and \ref{prop10}. By Proposition \ref{prop10} it is sufficient to prove that $\wh X^{q}=X^q$ a.s.\ for each $q\in\Q$, since $(\wh X^q , \wh\ell^q) \eqD (X^q , \ell^q)$ so $\wh X^{q}=X^q$ implies that $\wh \ell^{q}=\ell^q$.  
	By invariance in law under recentering of the infinite volume bipolar-oriented map and its continuum analogue and by Proposition~\ref{prop:triple-conv}, it holds that $(Z^m_{\lfloor\cdot+q\rfloor}-Z_q^m,X^{q,m})\rtaD (Z,X)\eqD(Z_{\cdot+q}-Z_q,X^q)$. Therefore $(Z  , \wh X^q) \eqD (Z , X^q)$. Since $X^q$ is a.s.\ determined by $Z$ we infer that $\wh X^q = X^q$ a.s. 
\end{proof}

For $q\in\Q$ let $t_q\in\R$ denote the time when the space-filling SLE$_{12}$ counterflow line $\wt\eta'$, which travels in west direction, hits $\eta'(q)$. Let $n_q^{m}$ denote the time when the dual space-filling path $\wt\lambda'$ visits the edge which was hit by the primal discrete space-filling path $\lambda$ at time $\lfloor q m \rfloor$, and define $t_q{m}=m^{-1}n_q^{m}$. Let $s_q^{N}$ (resp.\ $s_q^{S}$) denote the time when $\eta'$ hits the point where the north-going (resp.\ south-going) flowline from 0 and $\eta'(q)$ merge, and define $\BB s_q:=(s_q^{N},s_q^{S})$. Let $\iota_{0 , \lfloor q m \rfloor}^{ N}$ (resp.\ $\iota_{0 , \lfloor q m \rfloor}^{ S}$) denote the time when $\lambda'$ visits the edge where the discrete north-going (resp.\ south-going) flow lines from $\lambda'(0)$ and $\lambda'(\lfloor qm \rfloor)$ merge, as in Section~\ref{sec:merge-tight}; and define $s_q^{m,N}=m^{-1} \iota_{0 , \lfloor q m \rfloor}^{ N}$ (resp.\ $s_q^{m,S}=m^{-1} \iota_{0 , \lfloor q m \rfloor}^{ S}$). Also define $\BB s_q^{m}:=(s_q^{m,N},s_q^{m,S})$. See Figure \ref{fig:fourtrees} for an illustration.

\begin{figure}[ht!]
	\begin{center}
		\includegraphics[scale=1]{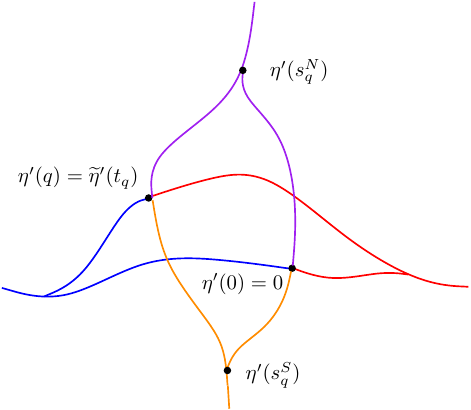}
	\end{center}
	\caption{Illustration of the proof of Theorem \ref{thm1}. The curves in red (resp.\ blue, purple, orange) are flow lines in east (resp.\ west, north, south) direction of the Gaussian free field from which $\eta'$ is generated.} \label{fig:fourtrees}
\end{figure}

\begin{prop}
	For each fixed $q\in\Q$ the laws of the random variables $t_q$ and $\BB s_q^{m}$ defined above are tight.
	\label{prop:tight-ts}
\end{prop}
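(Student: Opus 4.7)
The plan is to reduce both tightness claims to Proposition~\ref{prop:merge-tight}, which already supplies tightness of the sequences $\{m^{-1} \# M^N_{0, \lfloor qm \rfloor}\}_{m \in \BB N}$ and $\{m^{-1} \# M^S_{0, \lfloor qm \rfloor}\}_{m \in \BB N}$. (I will interpret the statement as concerning the discrete rescaled time $t_q^m := m^{-1} n_q^m$, since $t_q$ itself is a.s.\ finite and hence trivially tight.) Throughout, set $i := \lfloor qm \rfloor$.

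First I would handle $\BB s_q^m = (s_q^{m,N}, s_q^{m,S})$. By definition $\iota_{0,i}^N$ is the step at which $\lambda'$ crosses the edge $e^\star$ at which the two discrete north-going flow lines $\lambda_0^N$ and $\lambda_i^N$ first merge. Since $\lambda'$ is space-filling and visits each edge of $G$ exactly once, I would argue---using Lemma~\ref{prop:discrete-excursions} and the definition of $M^N_{0,i}$---that the edges of $G$ traversed by $\lambda'$ during the time interval $(i, \iota_{0,i}^N)$ are contained in $M^N_{0,i} \cup \{e^\star\}$, because once $\lambda'$ crosses $e^\star$ the two flow lines have merged and $\lambda'$ cannot re-enter the wedge they enclose (see Figure~\ref{fig-merge-walk} and the discussion in Section~\ref{sec:merge-tight}). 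This yields $\iota_{0,i}^N - i \leq \# M^N_{0,i} + 1$, and hence $s_q^{m,N} \leq q + m^{-1}(\# M^N_{0,i} + 2)$, so tightness of $s_q^{m,N}$ follows from Proposition~\ref{prop:merge-tight}. Tightness of $s_q^{m,S}$ is obtained by the symmetric argument applied to the time-reversal of $\mcl Z$, which has the same law as $\mcl Z$ with its two coordinates interchanged and which interchanges the roles of the north-going and south-going trees.

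For $t_q^m = m^{-1} n_q^m$ I would invoke the identity stated explicitly in the paragraph just before Proposition~\ref{prop:merge-tight}: $n_q^m$ equals the number of edges in the region of $G$ disconnected from $\infty$ by the four flow lines $\lambda_0^N,\, \lambda_0^S,\, \lambda_i^N,\, \lambda_i^S$. A straightforward geometric check shows that this bounded region is contained in $\lambda'([0,i]_{\BB Z}) \cup M^N_{0,i} \cup M^S_{0,i}$, so
\[
n_q^m \leq (i+1) + \# M^N_{0,i} + \# M^S_{0,i}.
\]
Dividing by $m$ and applying Proposition~\ref{prop:merge-tight} gives tightness of $t_q^m$.

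The only substantive obstacle is the combinatorial bookkeeping used to justify the two inclusions: that the edges visited by $\lambda'$ in $(i, \iota_{0,i}^N)$ all lie in $M^N_{0,i} \cup \{e^\star\}$, and that the region of $G$ cut off from $\infty$ by the four flow lines is contained in $\lambda'([0,i]_{\BB Z}) \cup M^N_{0,i} \cup M^S_{0,i}$. Both facts follow from the space-filling nature of $\lambda'$ together with the characterization of the sets $M^N, M^S$ via the walk $(\mcl L, \mcl R, \mcl X)$ supplied by Lemma~\ref{prop:discrete-excursions}. Once these geometric identifications are in hand, Proposition~\ref{prop:merge-tight} delivers both tightness statements immediately.
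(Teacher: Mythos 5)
Your treatment of $t_q^m$ is fine and matches what the paper intends: the (signed) number of edges $n_q^m$ is the number of edges enclosed by the four flow lines, and that region decomposes into $\lambda'([0,i]_{\BB Z})$ plus the two wedges $M^N_{0,i}$ and $M^S_{0,i}$, so Proposition~\ref{prop:merge-tight} applies directly. (Your reading of $t_q$ as $t_q^m$ is also the intended one.)

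However, your argument for $\BB s_q^m$ has a genuine gap, and the inequality you assert is false --- in fact it points in the wrong direction. The set $M^N_{0,i}$ consists only of the edges that lie \emph{inside} the wedge bounded by $\lambda_0^N$, $\lambda_i^N$ and $\lambda_i^E$, whereas during the time interval $(i,\iota^N_{0,i})$ the space-filling path $\lambda'$ repeatedly leaves this wedge: by the excursion decomposition of Section~\ref{sec:excursions} applied to $\lambda_i^N$, the path alternates between excursions to the east and to the west of $\lambda_i^N$, and every west excursion (as well as every visit to the far side of $\lambda_0^N$) consists of edges that do not belong to $M^N_{0,i}$. This is exactly why the walk $\mcl Y^i$ of Lemma~\ref{prop:walk-law} is parameterized by the times $I^i_r$ at which $\lambda'$ is in $M^N_{0,i}$ rather than by all times: one has $I^i_r \geq i + r$ with equality only if no step falls outside the wedge, so the correct relation is $\iota^N_{0,i}-i \gtrsim \# M^N_{0,i}$, not $\leq$. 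Consequently $\# M^N_{0,i}$ controls only the time $\lambda'$ spends inside the wedge, and Proposition~\ref{prop:merge-tight} alone cannot bound $\iota^N_{0,i}-i$; a single long excursion away from $\lambda_i^N$ contributes nothing to $\# M^N_{0,i}$ but arbitrarily much to $\iota^N_{0,i}-i$. The paper closes this gap by a different route: it first gets tightness of $t_q^m$ as you do, then uses the identity $\ell^m_{s_q^{m,N}}=\inf_{0\leq t\leq t_q^m}\wt R^m_t$ together with tightness of $\wt R^m$ (Proposition~\ref{prop:twoNtrees}) to deduce tightness of the local time $\ell^m_{s_q^{m,N}}$, and finally inverts this using $\ell^m\rtaD \ell$ and the fact that $\ell|_{[0,\infty)}$ a.s.\ increases to $\infty$. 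You would need to replace your wedge-containment step for $s_q^{m,N}$ and $s_q^{m,S}$ with an argument of this kind.
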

\begin{proof}
	Tightness of $t_q^{m}$ is immediate from Proposition \ref{prop:merge-tight}. The remainder of the proof will consist of proving that $s_q^{m,N}$ is tight; the proof for $s_q^{m,S}$ is identical. Since $\ell^m_{s_q^{m,N}}=\inf_{0\leq t\leq t_q^{m}} \wt R^m_t$ by definition of $\ell^m,s_q^{m,N},\wt R^m$, the laws of the random variable $\ell^m_{s_q^{m,N}}$ are tight by tightness of $t_q^m$ and tightness of $\wt R^m$ (which follows from Proposition \ref{prop:twoNtrees}). 
	Since $\ell^{m}\rtaD \ell$ and $\ell|_{[0,\infty)}$ is a.s.\ an increasing function converging to $\infty$ it holds that the laws of $s_q^{m,N}$ are tight.
\end{proof}

\begin{proof}[Proof of Theorem \ref{thm1}]
	Consider the infinite collections of random functions
	\eqb
	(Z^m,(X^{q,m})_{q\in\Q},(\ell^{q,m})_{q\in\Q},(t_q^{m})_{q\in\Q},(\BB s_q^{m})_{q\in\Q},\wt Z^m) .
	\label{eq10}
	\eqe 
	The laws of these collections are tight: tightness of $(Z^m,(X^{q,m})_{q\in\Q},(\ell^{q,m})_{q\in\Q})$ follows from Proposition \ref{prop:multiplefl}; 
	tightness of $t_q^{m}$ and $\BB s_q^{m}$ follow from Proposition \ref{prop:tight-ts};
	tightness of $\wt Z^m$ follows from Proposition \ref{prop:twoNtrees}. Consider a subsequence along which the random variable \eqref{eq10} converges in law. 
	By the Skorokhod representation theorem we may choose a coupling such that the convergence along this subsequence is almost sure, i.e., we can find random variables such that the following convergence holds almost surely along a subsequence for the topology of uniform convergence on compact intervals for every finite set of $q\in\Q$:
	\eqb
	\begin{split}
		(Z^m,(X^{q,m})_{q\in\Q},&(\ell^{q,m})_{q\in\Q},(t_q^{m})_{q\in\Q},(\BB s_q^{m})_{q\in\Q},\wt Z^m)\\
		&\rta
		\left(Z,(\wh X^{q})_{q\in\Q},(\wh\ell^{q})_{q\in\Q},(\wh t_{q})_{q\in\Q},(\wh {\BB s}_{q})_{q\in\Q}, 3\wh Z\right).
	\end{split}
	\label{eq11}
	\eqe
	We want to show that $\wh Z= \wt Z$ a.s., where $Z$ and $\wt Z$ are coupled together (and measurable with respect to each other) as described in Section \ref{sec:mainres}. Recall that the collection of random objects $(Z,(X^{q})_{q\in\Q},(\ell^{q})_{q\in\Q},(t_{q})_{q\in\Q},(\BB s_{q})_{q\in\Q},\wt Z)$ is a.s.\ determined by $Z$, hence we would expect (but have not proved) that the right-hand side of \eqref{eq11} is equal to this collection of random objects. However, it is immediate from Proposition \ref{prop:multiplefl} that $((\wh X^{q})_{q\in\Q},(\frac 32 \wh\ell^{q})_{q\in\Q})=((X^{q})_{q\in\Q},(\frac 32 \ell^{q})_{q\in\Q})$.
	
	First we will argue that $\{t_q\,:\,q\in\Q\}$ is a dense subset of $\R$. Observe that $\{\eta'(q)\,:\,q\in\Q\}$ is a dense subset of $\C$, since for any $\ep>0$ and $z\in\C$ the pre-image $(\eta')^{-1}(B_\ep(z))$ contains an interval. Next note that if $\{t_q\,:\,q\in\Q\}$ was not dense there would be an interval $I\subset\R$ such that $\eta'(I)\cap \{\eta'(q)\,:\,q\in\Q\}=\emptyset$, which is a contradiction to our first observation, since for each $I\subset\R$ there is an $\ep>0$ and a $z\in\C$ such that $B_\ep(z)\subset\eta'(I)$. It follows that $\{t_q\,:\,q\in\Q\}$ is a dense subset of $\R$.
	
	Since $\{t_q\,:\,q\in\Q\}$ is dense in $\R$, since $\wh Z$ is continuous, and by left/right symmetry, in order to prove that $\wh Z = 3\wt Z$ a.s.\ it is sufficient to prove that for any $q\in\Q$ it holds a.s.\ that $\wh R_{t_q}=\wt R_{t_q}$. We claim that it is sufficient to establish the following two results in order to prove that $\wh R_{t_q}=\wt R_{t_q}$: (i) a.s.-$\lim_{m\rta\infty}t_q^{m} = t_q$, and (ii) a.s.-$\lim_{m\rta\infty}\wt R^m_{t_q^{m}} = 3\wt R_{t_q}$. The claim follows by observing that
	\eqbn
	|\wh R_{t_q} - \wt R_{t_q}|
	\leq \left|\wh R_{t_q} -  \frac 13 \wt R^m_{t_q^{m}}\right|
	+ \left| \frac 13 \wt R^m_{t_q^{m}} - \wt R_{t_q}\right|,
	\eqen
	where the first term on the right-hand side converges to zero by (i) and the definition of $\wh R$, and the second term on the right-hand side converges to zero by (ii).
	
	First we will prove (i). Consider the two pairs of north-going and south-going flow lines started from $\eta'(q)$ and 0, respectively. Define $\mcl T\subset\R$ (resp.\ $\cT_q\subset\R$) to be the set of times $t\in\R$ when $\eta'(t)$ is to the east of the two flow lines started from $\eta'(q)$ and 0, respectively, and let $\cT^m\subset\R$ (resp.\ $\cT_q^{m}\subset\R$) be a normalized discrete analogue of this set. More precisely, we define $\mcl T^{m}$ (resp.\ $\cT_q^{m}$) to be the union of intervals $m^{-1}[n,n+1]$, $n\in\Z$, such that $\lambda'(n)$ is to the east of the union of the discrete north-going and south-going flow lines started from 0 (resp.\ $\lambda'(\lfloor m q \rfloor)$). Letting $\mu_0$ denote Lebesgue measure on $\BB R$ it holds that 
	\eqbn
	\begin{split}
		m^{-1}n_q^{m}&=\mu_0(\cT_q^{m}\backslash \cT^{m})- \mu_0(\cT^{m}\backslash \cT_q^{m}),\\
		t_q &= \mu_0(\cT_q\backslash \cT) - \mu_0(\cT\backslash \cT_q).
	\end{split}
	\eqen
	Note that one of $\cT_q\backslash \cT$ or $\cT\backslash \cT_q$ is empty, depending on whether $\eta'(q)$ lies to the east or west of the flow lines started from 0. In order to complete the proof of (i), it is therefore sufficient to prove that 
	$\lim_{m\rta\infty}\mu_0((\cT\Delta \cT^m))= 0$ and
	$\lim_{m\rta\infty}\mu_0((\cT_q\Delta \cT_q^{m}))= 0$ a.s.,
	where $\Delta$ denotes symmetric difference. The proof of these two limits are done similarly, so we will only prove that $\lim_{m\rta\infty}\mu_0((\cT\Delta \cT^m))= 0$ a.s. Almost surely, for each sufficiently large $m\in\N$ it holds that $\cT\Delta \cT^m\subset[-2s_q^{S},2s_q^{N}]$. Hence uniform convergence of $(L^m,R^m,X^{m})$ to $(L,R,X)$ on compact intervals implies that $\mu_0((\cT\Delta \cT^m))\rta 0$, since an interval $I=[t_1,t_2]$ is contained in an east excursion from the north-going flow line started from 0 iff $(X-X_{t_1})|_I=-(L-L_{t_1})|_I$. 
	
	Now we will prove (ii). First we prove that $\wh s_q^{N}\geq s_q^{N}$ (note that we will not prove equality). The function $s\mapsto \ell^q_{s-q}-\ell^0_s$ is constant for $s\geq \wh s_q^{N}$, since $s\mapsto \ell^{q,m}_{s-q} - \ell^{0,m}_{s}$ is constant for $s\geq s_q^{m,N}$, and $(\ell^{q,m}_{s-q} - \ell^{0,m}_{s})_{s\in\R}\rta(\ell^{q}_{s-q} - \ell^{0}_{s})_{s\in\R}$. On the other hand, the function $s\mapsto \ell^q_{s-q}-\ell^0_s$ is a.s.\ not constant on any interval contained in $[0,s_q^{N}]$, i.e., on any interval corresponding to times before the flow lines merge, hence $\wh s_q^{N}\geq s_q^{N}$.
	
	By definition of $\wt R^{m}$ it holds that $\wt R^m_{t_q^{m}} = \ell^{q,m}_{s_q^{m,N}-q} - \ell^{0,m}_{s_q^{m,N}}$, where $a:=(3\alpha/2)^{1/2}$. By continuity of $\ell^q$ and $\ell^0$, the definition of $\wh {\BB s}_q$, and since $\wh\ell^q=\ell^q$ and $\wh\ell^0=\ell^0$ this implies that a.s.\ $\wt R^m_{t_q^{m}} \rta 
	\frac 32 (\ell^{q}_{\wh s_q^{N}-q} - \ell^{0}_{\wh s_q^{N}})$.  By Proposition \ref{prop:local-time}, the definition of $\wt R$, and since $\wh s_q^{N}\geq s_q^{N}$ implies that $(\ell^{q}_{\wh s_q^{N}-q} - \ell^{0}_{\wh s_q^{N}})=(\ell^{q}_{s_q^{N}-q} - \ell^{0}_{s_q^{N}})$, it follows that 
	\eqbn
	\wt R^m_{t_q^{m}} 
	\rta 
	\frac 32 (\ell^{q}_{s_q^{N}-q} - \ell^{0}_{s_q^{N}}) =
	\frac 32 c \wt R_{t_q} ,
	\eqen
	where $c$ is the constant from Proposition~\ref{prop:local-time} with $\kappa' = 12$ and $\theta=0$. By Proposition \ref{prop:twoNtrees}  it holds that $\Var(\wt R^m_t)=9\alpha t^2+o_m(1)$ for any $t \in\R$, and since $\wt R\eqD R$ it holds that $\Var(\wt R_t)=\alpha t^2$. Hence we must have $c=2$, and $\wt R^m_{t_q^{m}} \rta 3 \wt R_{t^q}$.
\end{proof}

We made the following observation in the last paragraph of the above proof.
\begin{cor} \label{cor:c-value}
	In the context of Proposition \ref{prop:local-time} it holds that $c=2$ if $\theta=0$ and $\kappa'=12$.
\end{cor}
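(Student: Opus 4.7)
The plan is to package together all the auxiliary processes needed to read off the dual height function from the primal picture, extract a subsequential limit, and identify that limit on a countable dense set of times. Concretely, along with $Z^m$ and $\wt Z^m$ I would carry the enlarged tuple
\[
\bigl(Z^m,(X^{q,m})_{q\in\Q},(\ell^{q,m})_{q\in\Q},(t_q^m)_{q\in\Q},(\BB s_q^m)_{q\in\Q},\wt Z^m\bigr),
\]
where for each rational $q$, $t_q^m=m^{-1}n_q^m$ is the rescaled time at which the dual Peano curve $\wt\lambda'$ visits the edge $\lambda'(\lfloor qm\rfloor)$, and $\BB s_q^m=(s_q^{m,N},s_q^{m,S})$ records the (rescaled) merging times of the discrete north/south flow lines started at $\lambda'(0)$ and $\lambda'(\lfloor qm\rfloor)$. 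Tightness of each coordinate is already available: Proposition~\ref{prop:multiplefl} handles the primal triple, Proposition~\ref{prop:twoNtrees} handles $\wt Z^m$, Proposition~\ref{prop:merge-tight} handles $t_q^m$, and tightness of $\BB s_q^m$ follows from the identity $\ell^m_{s_q^{m,N}}=\inf_{0\le t\le t_q^m}\wt R^m_t$ plus almost-sure non-degeneracy of the limiting local time.

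After extracting a convergent subsequence and applying Skorokhod to get a.s.\ convergence, write the subsequential limit of $\wt Z^m$ as $3\wh Z$; the task is then to show $\wh Z=\wt Z$. Since $\wh Z$ is continuous and $\{t_q:q\in\Q\}$ is a.s.\ dense in $\R$ (because $\eta'$ is space-filling and $\{\eta'(q):q\in\Q\}$ is dense in $\C$), by the left/right symmetry of the model it suffices to check $\wh R_{t_q}=\wt R_{t_q}$ for every fixed $q\in\Q$. I would prove this via two a.s.\ limits along the subsequence: (i) $t_q^m\to t_q$, and (ii) $\wt R^m_{t_q^m}\to 3\wt R_{t_q}$.

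For (i), express both $t_q^m$ and $t_q$ as signed differences of Lebesgue measures of the east-side excursion sets of the respective north/south flow-line pairs from the two marked points. The characterization of east excursions in terms of the flow-line coordinate $X^{q,m}$ (an interval $[t_1,t_2]$ lies in such an excursion iff $X-X_{t_1}$ equals $-(L-L_{t_1})$ on that interval) together with uniform convergence of $(Z^m,X^{0,m},X^{q,m})$ on compact sets forces the symmetric differences between discrete and continuum east-excursion sets to have Lebesgue measure tending to zero. For (ii), combine the discrete identity $\wt R^m_{t_q^m}=\ell^{q,m}_{s_q^{m,N}-q}-\ell^{0,m}_{s_q^{m,N}}$ with Proposition~\ref{prop:multiplefl} to obtain $\wt R^m_{t_q^m}\to \tfrac{3}{2}\bigl(\ell^q_{\wh s_q^N-q}-\ell^0_{\wh s_q^N}\bigr)$, where $\wh s_q^N$ is the subsequential limit of $s_q^{m,N}$. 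One then needs the inequality $\wh s_q^N\ge s_q^N$: this holds because the continuum map $s\mapsto \ell^q_{s-q}-\ell^0_s$ is a.s.\ strictly non-constant on any subinterval of $[0,s_q^N]$, while its limit-of-discrete counterpart is already flat after $\wh s_q^N$. Since $\ell^q_{\cdot-q}-\ell^0_\cdot$ is constant for $s\ge s_q^N$, this inequality harmlessly reduces the expression to $\tfrac{3}{2}\bigl(\ell^q_{s_q^N-q}-\ell^0_{s_q^N}\bigr)$, which by Proposition~\ref{prop:local-time} equals $\tfrac{3}{2}c\,\wt R_{t_q}$ for the unknown constant $c$ of that proposition.

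Finally, I would pin down $c$ by variance matching: Proposition~\ref{prop:twoNtrees} forces $\Var(\wt R^m_t)\to 9\alpha|t|$, while $\wt Z\eqD Z$ gives $\Var(\wt R_t)=\alpha|t|$, so $\wt R^m_{t_q^m}$ must converge to $3\wt R_{t_q}$, forcing $\tfrac{3}{2}c=3$ and hence $c=2$ (a corollary worth recording separately). This identifies $\wh Z=\wt Z$ along every subsequential limit and yields the full joint convergence $(Z^m,\wt Z^m)\rtaD(Z,3\wt Z)$. The main obstacle is step (ii): we only get the one-sided inequality $\wh s_q^N\ge s_q^N$ for the merging time, not equality, and the whole argument hinges on exploiting flatness of $\ell^q_{\cdot-q}-\ell^0_\cdot$ past the merging time to extract the correct boundary-length value from this weaker information. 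This is precisely why the enlarged tuple must track the full families $(X^{q,m})_{q\in\Q}$ and $(\ell^{q,m})_{q\in\Q}$ jointly with $Z^m$ and $\wt Z^m$, rather than one rational $q$ at a time.
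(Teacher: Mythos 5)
Your proposal is correct and follows essentially the same route as the paper: the corollary is extracted, exactly as you do, from the last paragraph of the proof of Theorem~\ref{thm1}, where the convergence $\wt R^m_{t_q^m}\rta \tfrac32 c\,\wt R_{t_q}$ is matched against the variance asymptotics $\Var(\wt R^m_t)=9\alpha|t|+o_m(1)$ from Proposition~\ref{prop:twoNtrees} and $\Var(\wt R_t)=\alpha|t|$ to force $\tfrac32 c=3$, i.e.\ $c=2$. The intermediate steps you describe (the enlarged tuple, the one-sided inequality $\wh s_q^N\geq s_q^N$, and the flatness of $s\mapsto \ell^q_{s-q}-\ell^0_s$ past the merging time) are the same as in the paper's argument.
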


\bibliography{ref,ref2,cibib,ref_thispaper}
\bibliographystyle{hmralphaabbrv}
\end{document}